\newcommand{\modify}[1]{{{#1}}}
\title{Accelerating Alternating Least Squares for Tensor Decomposition by Pairwise Perturbation
% \thanks{Submitted to the editors 12/27/2019.
% \funding{This research used XSEDE to employ Stampede2 at the Texas Advanced Computing Center (TACC) through allocation TG-CCR180006.
% }}
}
\author{Linjian Ma\thanks{Department of Computer Science, University of Illinois at Urbana-Champaign, Urbana, IL, 61801 (\email{lma16@illinois.edu}, \email{solomon2@illinois.edu}).}
\and Edgar Solomonik\footnotemark[1]
% \thanks{Department of Computer Science, University of Illinois at Urbana-Champaign, Urbana, IL, 61801 (\email{solomon2@illinois.edu}).}
}
\date{}
\newtheorem{thm}{Theorem}[section]
\newtheorem{lem}[thm]{Lemma}
\newtheorem{cor}[thm]{Corollary}
\newcommand{\T}{T}
\newcommand{\tsr}[1]{\pmb{\mathcal{#1}}}
\newcommand{\fvcr}[1]{\textit{#1}}
\newcommand{\vcr}[1]{\mathbf{#1}}
\newcommand{\mat}[1]{\mathbf{#1}}
\newcommand{\defeq}{=}
\newcommand{\tnrm}[1]{{\left\| #1 \right\|}_2}
\newcommand{\fnrm}[1]{{\left\| #1 \right\|}_F}
\newcommand{\tinf}[1]{\inf\left\{\vnrm{\fvcr{f}_{#1}}\right\}}
\newcommand{\vnrm}[1]{{\left\| #1 \right\|}_2}
\newcommand{\cpbrak}[1]{\left[\!\left[ #1 \right]\!\right]}
\newcommand{\biggvnrm}[1]{{\Bigg\| #1 \Bigg\|}_2}
\newcommand{\inti}[2]{\{{#1},\ldots, {#2}\}}
\newcommand{\M}{M}
\newcommand{\bigast}{\mathop{\scalebox{2.}{\raisebox{-0.2ex}{$\ast$}}}}%
\begin{document}
\maketitle

%% ------------------------------------------------------------------
%% ABSTRACT
%% ------------------------------------------------------------------
% \begin{tcbverbatimwrite}{tmp_\jobname_abstract.tex}
\begin{abstract}
The alternating least squares algorithm for CP and Tucker decomposition is dominated in cost by the tensor contractions necessary to set up the quadratic optimization subproblems.
We introduce a novel family of algorithms that uses perturbative corrections to the subproblems rather than recomputing the tensor contractions.
This approximation is accurate when the factor matrices are changing little across iterations, which occurs when alternating least squares approaches convergence.
We provide a theoretical analysis to bound the approximation error.
Our numerical experiments demonstrate that the proposed pairwise perturbation algorithms are easy to control and converge to minima that are as good as alternating least squares.
The experimental results
show improvements of up to 3.1X with respect to state-of-the-art alternating least squares approaches for various model tensor problems and real datasets.
\end{abstract}

\begin{keywords}
  tensor, CP decomposition, Tucker decomposition, alternating least squares%, tensor condition number
\end{keywords}

\begin{AMS}
  15A69, 15A72, 65F35, 65K10, 65Y20, 65Y04, 65Y05, 68W25 
\end{AMS}
% \end{tcbverbatimwrite}
% \input{tmp_\jobname_abstract.tex}
%% ------------------------------------------------------------------
%% END HEADER
%% ------------------------------------------------------------------

\section{Introduction}
Tensor decompositions provide general techniques for approximation and modeling of high dimensional data~\cite{kolda2009tensor,grasedyck2013literature,cichocki2016tensor,hao2014nonnegative,perros2015sparse,carroll1970analysis}.
They are fundamental in methods for computational chemistry~\cite{benedikt2013tensor,hummel2017low,hohenstein:044103}, physics~\cite{ORUS2014117}, and quantum information~\cite{ORUS2014117,HUCKLE2013750}.
Tensor decompositions are performed on tensors arising both in the context of numerical-PDEs (e.g. as part of preconditinioners~\cite{pazner2018approximate}) as well as in data-driven statistical modeling~\cite{anandkumar2014tensor,kolda2009tensor,liu2013tensor,1593664}.
The alternating least squares (ALS) method, which is most commonly used to compute many of these tensor decompositions, has become a target for parallelization~\cite{karlsson2016parallel,hayashi2017shared}, performance optimization~\cite{chakaravarthy2017optimizing,schatz2014exploiting}, and acceleration by randomization~\cite{battaglino2017practical}.
We propose a new algorithm, \textit{pairwise perturbation}, that asymptotically accelerates ALS iteration complexity for CP and Tucker decomposition by leveraging an approximation that is provably accurate for well-conditioned problems and is effective when the algorithm approaches the optimization local minima.

Each iteration of ALS is a sweep over quadratic optimization subproblems for each individual factor matrix composing the decomposition. 
%Each subproblem is solved in sequence, updating factor matrices in an alternating manner.
For both CP and Tucker decomposition, computational cost of each sweep is dominated by the tensor contractions needed to setup the quadratic optimization subproblem for every factor matrix. These contractions are redone at every ALS sweep since they involve the factor matrices, all of which change after each sweep. We propose to circumvent these contractions in the scenario when the factor matrices are changing only slightly at each sweep, which is expected when ALS approaches a local minima. Our method approximates the setup of each quadratic optimization subproblem by computing perturbative corrections to the right-hand side due to the change in each factor matrix since a previous ALS sweep. To do so, pairwise perturbative operators are computed that propagate the change to each factor matrix to the subproblem needed to update each other factor matrix. Computing these operators costs slightly more than a typical ALS sweep. These operators are then reused to {\it approximately} perform more ALS sweeps until the changes to the factor matrices are deemed large, at which point, regular ALS sweeps are performed. Once the updates performed in these regular sweeps are again small,
the pairwise operators are recomputed. Each sweep computed approximately in this way costs asymptotically less than a regular ALS sweep.

For CP decomposition, CP-ALS~\cite{carroll1970analysis,harshman1970foundations} is widely used as it is robust and makes a relatively large amount of progress for the amount of computation required~\cite{kolda2009tensor} (although alternatives based on gradient and subgradient descent are also competitive~\cite{acar2011scalable}). Within CP-ALS, the computational bottleneck of each sweep involves an operation called \textit{the matricized tensor-times Khatri-Rao product} (MTTKRP). Similarly, the costliest operation in the ALS-based Tucker decomposition (Tucker-ALS) method is called \textit{the tensor times matrix-chain} (TTMc) product. 
For an order $N$ tensor with modes of dimension $s$, approximated computation of ALS sweeps via pairwise perturbation reduces the cost of that sweep from $O\left(s^NR\right)$ to {$O\left(s^2R+sR^2\right)$} for a rank-$R$ CP decomposition and from $O\left(s^NR\right)$ to $O\left(s^2R^{N-1}\right)$ for a rank-$R$ Tucker decomposition.

To quantify the accuracy of the pairwise perturbation algorithm, in Section~\ref{sec:error}, we provide an error analysis for both MTTKRP and TTMc operations. 
For both operations, we first view the ALS procedure in terms of pairwise updates, pushing updates to least-squares problems of all factor matrices as soon as any one of them is updated.
This reformulation is algebraically equivalent to the original ALS procedure.
If the relative change to each factor matrix since pairwise perturbation operators were constructed is bounded by $O(\epsilon)$, we can bound the absolute error of the way pairwise perturbation propagates updates in MTTKRP/TTMc calculations due to changes in any one of the other factor matrices.
For order three tensors, this absolute error bound yields a relative error bound that depends on a matrix condition number.
For the TTMc operation in Tucker decomposition, we derive a 2-norm relative error bound for the  
overall TTMc calculations (as opposed to updates thereof) of $O\left(\epsilon^2\right)$ that holds when the residual of the Tucker decomposition is somewhat less than the norm of the original tensor.
We also derive a Frobenius norm error bound of $O\left(\epsilon^2(s/R)^{N/2}\right)$ for TTMc, which only assumes that \textit{higher-order singular value decomposition}  (HOSVD)~\cite{de2000multilinear,tucker1966some} is performed to initialize Tucker-ALS (which is typical). In addition, in the appendix, we show that for the CP decomposition, if the factor matrices have changed by $O(\epsilon)$ in norm, the relative error in pairwise perturbation for the overall MTTKRP calculation is bounded by a term that scales with $O\left(\epsilon^2\right)$ and a tensor condition number.
However, we demonstrate that in the worst case scenario, for decomposition of any large tensor, this tensor condition number can be infinite.

In order to evaluate the performance benefit of pairwise perturbation, in Section~\ref{sec:exp}, we compare per ALS sweep and full decomposition performance using a NumPy-based~\cite{harris2020array} sequential implementation. 
Our microbenchmark results compare the performance of one CP-ALS sweep with different input tensor sizes.
% both a NumPy based sequential implementation and a distributed-memory parallel implementation on many nodes of an Intel KNL system (Stampede2) using Cyclops Tensor Framework~\cite{solomonik2014massively} and ScaLAPACK~\cite{Dongarra:1997:SUG:265932} libraries. 
% For the sequential implementation with NumPy, our microbenchmark results compare the performance of one CP-ALS sweep with different input tensor sizes.
We consider the initialization sweep, in which the pairwise perturbation operators are calculated, as well as the approximated sweep, in which the operators are not recalculated, of the pairwise perturbation algorithm.
{These results show that the approximated pairwise perturbation sweeps are up to $6.3$X faster than one ALS sweep with the dimension tree algorithm~\cite{li2017model,chakaravarthy2017optimizing,kaya2019computing,kaya2017high,phan2013fast,vannieuwenhoven2015computing,ballard2018parallel} for an order three tensor with dimension size 960, and up to 33.0X faster than one ALS sweep for an order six tensor.}
We then study the performance and numerical behavior of pairwise perturbation for the decomposition of synthetic tensors and application datasets.
Our experimental results show that pairwise perturbation achieves fitness as high as standard ALS, {and achieves speed-ups of up to 3.1X for CP decomposition and up to 1.13X for Tucker decomposition} with respect to state of the art ALS algorithms.

We also evaluate the performance of pairwise perturbation based on a distributed-memory parallel implementation on many nodes of an Intel KNL system (Stampede2) using Cyclops Tensor Framework~\cite{solomonik2014massively} and ScaLAPACK~\cite{Dongarra:1997:SUG:265932} libraries.
%For the parallel implementation with Cyclops, 
% Our microbenchmark results compare the strong and weak scaling performance of one ALS sweep with the dimension tree algorithm to the pairwise perturbation initialization as well as the approximated step.
% The results show that the approximated pairwise perturbation steps are up to 17.7X faster than one ALS sweep in the weak scaling regime, while computing the pairwise perturbation operators takes no more than 2.6X the time of a dimension tree ALS sweep.
% We then study the performance and numerical behavior of pairwise perturbation for the CP decomposition of a tensor from quantum chemistry applications, and 
Our experimental results show that pairwise perturbation achieves fitness as high as standard ALS, and achieves speed-ups of up to 1.75X with respect to a standard ALS implementation on top of the Cyclops library on Stampede2.

\section{Background}
This section first outlines the notation used throughout this paper, then outlines the basic alternating least square algorithms for both CP and Tucker decomposition. 
\subsection{Notation and Definitions}
Our analysis makes use of tensor algebra in both element-wise equations and specialized notation for tensor operations~\cite{kolda2009tensor}.
For vectors, bold lowercase Roman letters are used, e.g., $\vcr{x}$. For matrices, bold uppercase Roman letters are used, e.g., $\mat{X}$. For tensors, bold calligraphic fonts are used, e.g., $\tsr{X}$. An order $N$ tensor corresponds to an $N$-dimensional array with dimensions $s_1\times \cdots \times s_N$. 
Elements of vectors, matrices, and tensors are denotes in parentheses, e.g., $\vcr{x}(i)$ for a vector $\vcr{x}$, $\mat{X}(i,j)$ for a matrix $\mat{X}$, and $\tsr{X}(i,j,k,l)$ for an order 4 tensor $\tsr{X}$.
Columns of a matrix $\mat{X}$ are denoted by $\vcr{x}_i =  \mat{X}(:,i)$. 
The mode-$n$ matrix product of an order $N$ tensor $\tsr{X} \in \mathbb{R}^{s_1\times \cdots \times s_N}$ with a matrix $ \textbf{A}\in \mathbb{R}^{J\times s_n}$ is denoted by $\tsr{X}\times_n \mat{A}$, with the result having dimensions $s_1\times\cdots\times s_{n-1}\times J\times s_{n+1}\times\cdots\times s_N$.
{The mode-$n$ vector product of $\tsr{X}$ with a vector $ \vcr{v}\in \mathbb{R}^{s_n}$ is denoted by $\tsr{X}\times_n \mat{v}^T$, with the result having dimensions $s_1\times\cdots\times s_{n-1}\times s_{n+1}\times\cdots\times s_N$.}
% By juxtaposition of the tensor $\tsr{X}$ and a matrix $\mat{M}$, we denote the mode-$N$ product with the transpose of the matrix, $\tsr{X}\mat{M} \defeq \tsr{X}\times_N \mat{M}^T$.
Matricization is the process of unfolding a tensor into a matrix. Given a tensor $\pmb{\mathcal{X}}$ the mode-$n$ matricized version is denoted by $\textbf{X}_{(n)}\in \mathbb{R}^{s_n\times K}$ where $K=\prod_{m=1,m\neq n}^N s_m$. 
We generalize this notation to define the unfoldings of a tensor $\tsr{X}$ with dimensions $s_1 \times \cdots \times s_N$ into an order $M+1$ tensor, $\tsr{X}_{(i_1,\ldots,i_M)}\in \mathbb{R}^{s_{i_1} \times \cdots \times s_{i_M}\times K}$, where $K=\prod_{i\in \{1,\ldots, N\} \setminus \{i_1,\ldots, i_M\}} s_i$, e.g.,
$
\tsr{X}(j,k,l,m) = \tsr{X}_{(1,3)}\left(j,l,k+(m-1)s_2\right).
$
We use parenthesized superscripts as labels for different tensors, e.g., $\tsr{X}^{(1)}$ and $\tsr{X}^{(2)}$ are generally unrelated tensors.

The Hadamard product of two matrices $\mat{U}, \mat{V} \in \mathbb{R}^{I\times J}$ resulting in matrix $\mat{W} \in \mathbb{R}^{I\times J}$ is denoted by $\mat{W} = \mat{U} \ast \mat{V}$, where $\mat{W}(i,j)= \mat{U}(i,j)\mat{V}(i,j)$. The outer product of K vectors $\vcr{u}^{(1)}, \ldots , \vcr{u}^{(K)}$ of corresponding sizes $s_1, \ldots , s_K$ is denoted by $\tsr{X} = \vcr{u}^{(1)} \circ \cdots \circ \vcr{u}^{(K)}$ where $\tsr{X} \in \mathbb{R}^{s_1\times\cdots\times s_K}$ is an order $K$ tensor. The Kronecker product of vectors $\vcr{u} \in \mathbb{R}^I$ and $\vcr{v} \in \mathbb{R}^J$ is denoted by $\vcr{w} = \vcr{u} \otimes \vcr{v}$ where $\vcr{w} \in \mathbb{R}^{IJ}$.
For matrices $\mat{A}\in \mathbb{R}^{I\times K}$ and $\mat{B}\in \mathbb{R}^{J\times K}$, their Khatri-Rao product results in a matrix of size $(IJ)\times K$ defined by
$
   \mat{A}\odot \mat{B} = \left[\vcr{a}_1\otimes \vcr{b}_1,\ldots, \vcr{a}_K\otimes \vcr{b}_K\right] .
$

\subsection{Tensor Norm} 

The spectral norm of any tensor $\tsr{T}\in\mathbb{R}^{s_1\times\cdots s_N}$ is
\[
{
\tnrm{\tsr{T}} \defeq  \max_{\substack{{\forall i \in \{2,\ldots, N\}, \vcr{x}^{(i)} \in \mathbb{R}^{s_{i}} } \\ {\vnrm{\vcr{x}^{(2)}}= \cdots=\vnrm{\vcr{x}^{(N)}}}=1}} 
\Big \|\tsr{T} \bigtimes_{i\in\{2,\ldots, N\}}\vcr{x}^{(i)T}\Big\|_2,
%\times_2\vcr{x}^T_2 \times  \cdots \times_N \vcr{x}^T_{N}},
}
\]
where $\tsr{T}$ is contracted with {$\vcr{x}^{(i)}$} along its $i$th mode. The spectral tensor norm corresponds to the magnitude of the largest tensor singular value~\cite{lim2005singular}.
Computing the spectral norm is NP-hard~\cite{Hillar:2013}, but can usually be done in practice by specialized variants of ALS~\cite{friedland2013best}.
The spectral norm is invariant under reordering of modes of $\tsr{T}$.
Lemma~\ref{lem:norm} shows submultiplicativity of this norm for the multilinear multiplication.

\begin{lemma}
\label{lem:norm}
Given any tensor $\tsr{T}\in\mathbb{R}^{s_1 \times \cdots \times s_N}$ and matrix $\mat{M} \in \mathbb{R}^{s_N\times R}$, if  $ \tsr{V} = {\tsr{T} \times_N \mat{M}^T}$ then
$\tnrm{\tsr{V}}\leq \tnrm{\tsr{T}}\tnrm{\mat{M}}$.
\end{lemma}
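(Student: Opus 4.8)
The plan is to reduce the tensor-times-matrix bound to the definition of the spectral norm by carefully tracking which mode the multiplication acts on. Since $\tsr{V} = \tsr{T}\mat{M} = \tsr{T}\times_N \mat{M}^T$, the tensor $\tsr{V}$ has the same first $N-1$ modes as $\tsr{T}$ but its $N$th mode has dimension $R$. By definition,
\[
\tnrm{\tsr{V}} = \max_{\substack{\vnrm{\vcr{x}_2}=\cdots=\vnrm{\vcr{x}_N}=1}} \vnrm{\mat{V}_{(1)}(\vcr{x}_2\circ\cdots\circ\vcr{x}_N)},
\]
where now $\vcr{x}_N\in\mathbb{R}^R$. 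First I would express the mode-1 matricization of $\tsr{V}$ in terms of that of $\tsr{T}$: contracting $\tsr{V}$ along modes $2,\ldots,N-1$ with $\vcr{x}_2,\ldots,\vcr{x}_{N-1}$ and along mode $N$ with $\vcr{x}_N$ is the same as contracting $\tsr{T}$ along modes $2,\ldots,N-1$ with the same vectors and along mode $N$ with $\mat{M}\vcr{x}_N$. That is, $\mat{V}_{(1)}(\vcr{x}_2\circ\cdots\circ\vcr{x}_N) = \mat{T}_{(1)}(\vcr{x}_2\circ\cdots\circ\vcr{x}_{N-1}\circ(\mat{M}\vcr{x}_N))$.

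Next I would set $\vcr{y} = \mat{M}\vcr{x}_N$. If $\vcr{y} = \vcr{0}$ the corresponding term is zero and contributes nothing to the maximum, so assume $\vcr{y}\neq\vcr{0}$ and write $\vcr{y} = \vnrm{\vcr{y}}\,\hat{\vcr{y}}$ with $\hat{\vcr{y}}$ a unit vector. By multilinearity of the contraction in its last argument,
\[
\vnrm{\mat{T}_{(1)}(\vcr{x}_2\circ\cdots\circ\vcr{x}_{N-1}\circ\vcr{y})} = \vnrm{\vcr{y}}\cdot\vnrm{\mat{T}_{(1)}(\vcr{x}_2\circ\cdots\circ\vcr{x}_{N-1}\circ\hat{\vcr{y}})} \leq \vnrm{\vcr{y}}\cdot\tnrm{\tsr{T}},
\]
the inequality following because $\vcr{x}_2,\ldots,\vcr{x}_{N-1},\hat{\vcr{y}}$ are all unit vectors and hence the quantity is bounded by the defining maximum for $\tnrm{\tsr{T}}$. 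Since $\vnrm{\vcr{x}_N}=1$, we have $\vnrm{\vcr{y}} = \vnrm{\mat{M}\vcr{x}_N}\leq \tnrm{\mat{M}}$. Combining these and taking the maximum over all admissible $\vcr{x}_2,\ldots,\vcr{x}_N$ yields $\tnrm{\tsr{V}}\leq\tnrm{\mat{M}}\tnrm{\tsr{T}}$.

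The only mild subtlety — and the step I would be most careful about — is the reindexing: verifying that contracting $\tsr{T}\times_N\mat{M}^T$ along its last mode with $\vcr{x}_N$ genuinely equals contracting $\tsr{T}$ along its last mode with $\mat{M}\vcr{x}_N$, which is just associativity of the contractions written out elementwise as $\sum_{j}\bigl(\sum_k \mat{M}(k,j)\,\tsr{T}(\ldots,k)\bigr)\vcr{x}_N(j) = \sum_k \tsr{T}(\ldots,k)\bigl(\sum_j \mat{M}(k,j)\vcr{x}_N(j)\bigr)$. Everything else is the definition of the spectral norm plus homogeneity; no real obstacle is expected, and the argument is symmetric in the modes so the choice of mode $N$ versus any other mode is immaterial given the stated reordering invariance.
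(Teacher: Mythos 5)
Your proposal is correct and is essentially the same argument as the paper's: both reduce the contraction of $\tsr{V}$ to contracting $\tsr{T}$ along its last mode with $\mat{M}\vcr{x}_N$, then use homogeneity and $\vnrm{\mat{M}\vcr{x}_N}\leq\tnrm{\mat{M}}$. The only difference is stylistic — the paper phrases it as a proof by contradiction while you argue directly (and you also handle the degenerate case $\mat{M}\vcr{x}_N=\vcr{0}$, which the paper's division by $\vnrm{\vcr{z}}$ silently skips).
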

\begin{proof}
%Without loss of generality, let $n=1$. 
%Assume $\tnrm{\tsr{V}}> \tnrm{\tsr{T}}\tnrm{\mat{M}}$, then 
There exist unit vectors {$\vcr{x}^{(2)},\ldots, \vcr{x}^{(N)}$} such that
\[
%\tnrm{\tsr{T}}\tnrm{\mat{M}}<
\tnrm{\tsr{V}}
={\biggvnrm{
\tsr{V} \bigtimes_{i\in\{2,\ldots, N\}}\vcr{x}^{(i)T}}}
=\biggvnrm{
{
\tsr{T} \bigtimes_{i\in\{2,\ldots, N-1\}}\vcr{x}^{(i)T} \times_N \left(\mat{M}\vcr{x}^{(N)}\right)^T
% \tsr{T} \times_2\vcr{x}^T_2 \times \cdots \times_{N-1} \vcr{x}^T_{N-1}\times_N (\mat{M}\vcr{x}_{N})^T
}
}.
\]
Let 
\(
\vcr{z} = \mat{M}{\vcr{x}^{(N)}} % \Tilde{\vcr{z}}_1 = \frac{\vcr{z}_1}{\tnrm{\vcr{z}_1}}, \text{and}\\9
\), so $\vnrm{\vcr{z}}\leq\vnrm{\mat{M}}$.
If $\vnrm{\vcr{z}}=0$, then $\tnrm{\tsr{V}}=0$, the inequality holds. Otherwise, since
%We arrive at a contradiction, since
\begin{align*}
%\tnrm{\tsr{V}}
\biggvnrm{
{
\tsr{T} \bigtimes_{i\in\{2,\ldots, N-1\}}\vcr{x}^{(i)T} \times_N \vcr{z}^T
}
% \mat{T}_{(1)} (\vcr{x}_2\circ \cdots \circ \vcr{x}_{N-1}\circ \vcr{z})
} 
\leq 
\biggvnrm{
{
\tsr{T} \bigtimes_{i\in\{2,\ldots, N-1\}}\vcr{x}^{(i)T} \times_N \vcr{z}^T
}}\frac{\vnrm{\mat{M}}}{\vnrm{\vcr{z}}}
\leq \tnrm{\tsr{T}}\vnrm{\mat{M}},
\end{align*}
the inequality still holds.
\end{proof}

\subsection{CP Decomposition with ALS}
\label{sec:bg-cp}
The CP tensor decomposition~\cite{hitchcock1927expression,harshman1970foundations} is a higher-order generalization of the matrix singular value decomposition (SVD). 
The CP decomposition is denoted by
   \[
    \tsr{X} \approx \cpbrak{\mat{A}^{(1)}, \cdots , \mat{A}^{(N)} }, \quad \text{where} \quad  
    \mat{A}^{(i)} = \left[ \mat{a}_1^{(i)}, \cdots , \mat{a}_R^{(i)} \right],
    \]
and serves to approximate a tensor by a sum of $R$ tensor products of vectors,
  \[
  \tsr{X} \approx \sum_{r=1}^{R} \mat{a}_r^{(1)}\circ \cdots \circ \mat{a}_r^{(N)}.
  \]
The CP-ALS method alternates among quadratic optimization problems for each of the factor matrices $\mat{A}^{(n)}$, resulting in linear least squares problems for each row,
\[
 \mat{A}^{(n)}_{\text{new}}\mat{P}^{(n)}{}^T \cong \mat{X}_{(n)},
\]
where the matrix $\mat{P}^{(n)}\in \mathbb{R}^{I_n \times R}$, where $I_n = s_1\times \cdots \times s_{n-1}\times s_{n+1}\times \cdots \times s_N$, is formed by Khatri-Rao products of the other factor matrices,
\[
    \mat{P}^{(n)}=\mat{A}^{(1)} \odot \cdots \odot  \mat{A}^{(n-1)}  \odot  \mat{A}^{(n+1)} \odot \cdots \odot \mat{A}^{(N)}.
\]
These linear least squares problems are often solved via the normal equations~\cite{kolda2009tensor}.
We also adopt this strategy here to devise the pairwise perturbation method.
The normal equations for the $n$th factor matrix are
     \[
     \mat{A}^{(n)}_{\text{new}}\boldsymbol{\Gamma}^{(n)}= \mat{X}_{(n)}\mat{P}^{(n)},
     \]  
where $\mat{\Gamma}\in\mathbb{R}^{R\times R}$ can be computed via
   \begin{align*}
\boldsymbol{\Gamma}^{(n)}=\textbf{S}^{(1)}\ast\cdots\ast \textbf{S}^{(n-1)}  \ast \textbf{S}^{(n+1)}\ast\cdots\ast \textbf{S}^{(N)}, \quad
     \text{with each} \quad \textbf{S}^{(i)} = \textbf{A}^{(i)T}\textbf{A}^{(i)}.
        \end{align*}
These equations also give the $n$th component of the optimality conditions for the unconstrained minimization of the nonlinear objective function,
   \[
f\left(\textbf{A}^{(1)}, \ldots , \textbf{A}^{(N)}\right) \defeq
 \frac{1}{2}\fnrm{\pmb{\mathcal{X}}-\cpbrak{ \textbf{A}^{(1)}, \cdots , \textbf{A}^{(N)} }}^2, 
   \]
   for which the $n$th component of the gradient is
   \[
      \frac{\partial f}{\partial \textbf{A}^{(n)}}
      = \textbf{G}^{(n)}=\textbf{A}^{(n)}\boldsymbol{\Gamma}^{(n)}-\textbf{X}_{(n)}   \mat{P}^{(n)}
      = \left(\mat{A}^{(n)}-\mat{A}^{(n)}_{\text{new}}\right)\boldsymbol{\Gamma}^{(n)}.
   \]
Algorithm~\ref{alg:cp_als} presents the basic ALS method described above, keeping track of the Frobenius norm of the $N$ components of the overall gradient to ascertain convergence.

\begin{algorithm}
    \caption{\textbf{CP-ALS}: ALS procedure for CP decomposition}
\label{alg:cp_als}
\begin{algorithmic}[1]
\small
\STATE{\textbf{Input: }Tensor $\tsr{X}\in\mathbb{R}^{s_1\times\cdots s_N}$, 
stopping criteria \modify{$\Delta$}
}
\STATE{Initialize $[\![ \textbf{A}^{(1)}, \ldots , \textbf{A}^{(N)} ]\!]$ as uniformly distributed random matrices within $[0,1]$, initialize $\mat{G}^{(n)}\leftarrow \mat{A}^{(n)}$, $ \mat{S}^{(n)} \leftarrow\mat{A}^{(n)T}\mat{A}^{(n)}$ for $n\in\{1,\ldots,N\}$
}
\WHILE{$\sum_{i=1}^{N}{\fnrm{\mat{G}^{(i)}}}>\modify{\Delta\|\tsr{X}\|_F}$}
\FOR{\texttt{$n\in \inti{1}{N} $}}
\STATE\label{line3}{$\boldsymbol{\Gamma}^{(n)}\leftarrow\textbf{S}^{(1)}\ast\cdots\ast \textbf{S}^{(n-1)}\ast \textbf{S}^{(n+1)}\ast\cdots\ast \textbf{S}^{(N)} $}
\STATE{Update $ \textbf{\M}^{(n)}$ based on the dimension tree algorithm shown in Figure~\ref{fig:dt}}
\STATE{$   \textbf{A}^{(n)}_\text{new} \leftarrow \textbf{\M}^{(n)}\boldsymbol{\Gamma}^{(n)}{}^{\dagger} $}
\STATE{$   \mat{G}^{(n)}\leftarrow (\mat{A}^{(n)}-\mat{A}^{(n)}_\text{new})\mat{\Gamma}^{(n)}$}
\STATE{$   \textbf{A}^{(n)} \leftarrow\textbf{A}^{(n)}_\text{new}$}
\vspace{.01in}
\STATE{$   \textbf{S}^{(n)} \leftarrow {\textbf{A}^{(n)}{}^T}\textbf{A}^{(n)} $}
\ENDFOR
\ENDWHILE
\RETURN $[\![ \textbf{A}^{(1)}, \ldots , \textbf{A}^{(N)} ]\!]$
\end{algorithmic}
\end{algorithm}
The \textit{Matricized Tensor Times Khatri-Rao Product} or MTTKRP computation, $\mat{M}^{(n)}=\mat{X}_{(n)}\mat{P}^{(n)}$, is the main computational bottleneck of CP-ALS\cite{ballard2017communication}.
 The computational cost of MTTKRP is $\Theta(s^NR)$ if $s_n=s$ for all $n\in\inti{1}{N}$. 
 {With the dimension tree algorithm, which will be detailed in Section~\ref{sec:bg-dt}, the computational complexity for all the MTTKRP calculations in one ALS sweep is $4s^{N}R$ to leading order in $s$.}
The normal equations worsen the conditioning, but are advantageous for CP-ALS, since $\mat{\Gamma}^{(n)}$ can be computed and inverted in just $O(s^2R+R^3)$ cost and the MTTKRP can be amortized by dimension trees.
If QR is used instead of the normal equations, the product of $\mat{Q}$ with the right-hand sides would have the cost $2s^NR$ and would need to be done for each linear least squares problem, increasing the overall leading order cost by a factor of $N/2$.
%Our pairwise perturbation algorithm amortizes the dominant cost of the computation of $\mat{M}^{(n)}=\mat{X}_{(n)}\mat{P}^{(n)}$ across multiple ALS sweeps.

\subsection{Tucker Decomposition with ALS}
\label{sec:bg-tucker}

\begin{algorithm}
\caption{\textbf{Tucker-ALS}: ALS procedure for Tucker decomposition}\label{tuckerals}
\begin{algorithmic}[1]
\label{alg:tucker-als}
\small
\STATE{\textbf{Input: }Tensor $\tsr{X}\in\mathbb{R}^{s_1 \times \cdots\times s_N}$, 
%Tensor dimension $N$, Sizes in each dimension $\{s_1,\ldots,s_N\}$, 
decomposition ranks $\{R_1,\ldots,R_N\}$, 
stopping criteria \modify{$\Delta$}}
\STATE{Initialize $[\![\tsr{G}; \mat{A}^{(1)}, \ldots , \mat{A}^{(N)} ]\!]$ using HOSVD, initialize $\tsr{F}\leftarrow\tsr{G}$}
%\STATE{$t \leftarrow 0$}
\WHILE{$\fnrm{\tsr{F}}>\modify{\Delta\|\tsr{X}\|_F}$} {
% 	\STATE{$t\leftarrow t+1$ }
	\FOR{{$n\in \inti{1}{N} $}} {
%   \STATE{$\pmb{\mathcal{Y}}\leftarrow\pmb{\mathcal{X}}\times_1\textbf{A}^{(1)T}\cdots\times_{n-1}\textbf{A}^{(n-1)T}\times_{n+1}\textbf{A}^{(n+1)T}\cdots\times_{N}\textbf{A}^{(N)T}$}
\STATE{Update $ \tsr{Y}^{(n)}$ based on the dimension tree algorithm}
  \STATE{$   \textbf{A}^{(n)} \leftarrow R_n$  leading left singular vectors of $\textbf{Y}^{(n)}_{(n)} $} 
	}\ENDFOR
  \STATE{$\tsr{G}_\text{new}\leftarrow
  \tsr{Y}^{(N)}\times_{N}\mat{A}^{(N)T}$}
%   \STATE{$\tsr{G}_\text{new}\leftarrow\pmb{\mathcal{X}}\times_1\textbf{A}^{(1)T}\cdots\times_{N}\textbf{A}^{(N)T}$}
  \STATE{$\tsr{F}\leftarrow\pmb{\mathcal{G}}_\text{new}-\pmb{\mathcal{G}}$}
  \STATE{$\tsr{G}\leftarrow\pmb{\mathcal{G}}_\text{new}$}
}
\ENDWHILE
\RETURN $[\![\pmb{\mathcal{G}}; \textbf{A}^{(1)}, \ldots , \textbf{A}^{(N)} ]\!]$ 
\end{algorithmic}
\end{algorithm}
In this section we review the ALS method for computing a low-rank Tucker decomposition of a tensor~\cite{tucker1966some}. Tucker decomposition approximates a tensor by a core tensor contracted by matrices with orthonormal columns along each mode. The Tucker decomposition is given by 
  \[
  \tsr{X} \approx \cpbrak{\tsr{G}; \mat{A}^{(1)}, \ldots , \mat{A}^{(N)}  }\defeq \tsr{G}\times_1\textbf{A}^{(1)}\times_2\textbf{A}^{(2)}\cdots\times_N\textbf{A}^{(N)} .
%   = \sum_{r_1=1}^{R_1}\sum_{r_2=1}^{R_2}\cdots\sum_{r_N=1}^{R_N}g_{r_1\cdots r_N} \textbf{a}_{r_1}^{(1)}\circ\textbf{a}_{r_2}^{(2)}\circ \cdots \circ \textbf{a}_{r_N}^{(N)}
  \]
The corresponding element-wise expression is
\begin{align*}
\tsr{X}\left(x_1,\ldots,x_N\right) \approx  \sum_{\{z_1,\ldots, z_N\}}\tsr{G}\left(z_1,\ldots,z_N\right)\prod_{r\in\{1,\ldots,N\}} \mat{A}^{(r)}\left(x_r,z_r\right).
\end{align*}
%  It can also be written as 
%   \[
%    \pmb{\mathcal{X}} \approx  .
%    \]
The core tensor $\tsr{G}$ is of order $N$ with dimensions (Tucker ranks) $R_1\times \cdots \times R_N$ (throughout error and cost analysis we assume each $R_n=R$ for $n\in \inti{1}{N}$).
The matrices $\textbf{A}^{(n)} \in \mathbb{R}^{s_n\times R_n} $ have orthonormal columns.

The \textit{higher-order singular value decomposition} (HOSVD)~\cite{de2000multilinear,tucker1966some} computes the leading left singular vectors of each one-mode unfolding of $\tsr{X}$, providing a good starting point for the Tucker-ALS algorithm. The classical HOSVD computes the truncated SVD of $\mat{X}_{(n)}\approx\mat{U}^{(n)}\mat{\Sigma}^{(n)}\mat{V}^{(n)T}$ and sets $\mat{A}^{(n)} = \mat{U}^{(n)}$ for $n\in\inti{1}{N}$. The interlaced HOSVD~\cite{vannieuwenhoven2012new,hackbusch2012tensor} instead computes the truncated SVD of 
\[\mat{Z}^{(n)}_{(n)} =\mat{U}^{(n)}\mat{\Sigma}^{(n)}\mat{V}^{(n)T} \quad \text{where} \quad \tsr{Z}^{(1)}=\tsr{X} \quad \text{and}\quad \mat{Z}^{(n+1)}_{(n)} = \mat{\Sigma}^{(n)}\mat{V}^{(n)T}.\]
The interlaced HOSVD is cheaper, since the size of each $\tsr{Z}^{(n)}$ is $s^{N-n+1}R^{n-1}$.

The ALS method for Tucker decomposition~\cite{andersson1998improving,de2000best,kolda2009tensor}, which is also called the \textit{higher-order orthogonal iteration} (HOOI), then proceeds by fixing all except one factor matrix, and computing a low-rank matrix factorization to update that factor matrix and the core tensor.
To update the $n$th factor matrix, Tucker-ALS factorizes
    \[
    \tsr{Y}^{(n)}=\tsr{X}\times_1\mat{A}^{(1)T}\cdots\times_{n-1}\mat{A}^{(n-1)T}\times_{n+1}\mat{A}^{(n+1)T}\cdots\times_{N}\mat{A}^{(N)T},
    \]
which is called the \textit{Tensor Times Matrix-chain} or TTMc, into a product of an matrix with orthonormal columns $\mat{A}^{(n)}$ and the core tensor $\tsr{G}$, so that
    \(
    \mat{Y}^{(n)}_{(n)} \approx \mat{A}^{(n)}\mat{G}_{(n)} .
    \)
This factorization can be done by taking $\mat{A}^{(n)}$ to be the $R_n$ leading left singular vectors of $\mat{Y}^{(n)}_{(n)}$.
This Tucker-ALS procedure is given in Algorithm~\ref{tuckerals}.

As in previous work~\cite{oseledets2009breaking,choi2018high}, our implementation computes these singular vectors by finding the left eigenvectors of
the Gram matrix
    \(
    \mat{W} = \mat{Y}^{(n)}_{(n)}\mat{Y}^{(n)T}_{(n)}.
    \)
Computing the Gram matrix sacrifices some numerical stability, but avoids a large SVD and provides consistency of the signs of the singular vectors 
across ALS sweeps.

{
\subsection{The Dimension Tree Algorithm}
\label{sec:bg-dt}
For CP-ALS, the tensor contractions for MTTKRP can be amortized across the linear least squares problems necessary for a given ALS sweep (for loop iteration in Algorithm~\ref{alg:cp_als}).
Such amortization techniques are referred to as dimension tree algorithms and a variety of dimension trees have been studied to minimize costs~\cite{li2017model,chakaravarthy2017optimizing,kaya2019computing,kaya2017high,phan2013fast,vannieuwenhoven2015computing,ballard2018parallel}.
As our analysis focuses on leading order cost in $s$, simple binary dimension trees are an optimal choice.
These dimension trees for $N=3,4$ are illustrated in Figure~\ref{fig:als_cp3},\ref{fig:als_cp4}. We define the partially contracted MTTKRP intermediates $\tsr{M}^{(i_1,i_2,\ldots,i_m)}$ therein as follows,
\begin{equation}
\tsr{M}^{(i_1,i_2,\ldots,i_m)} = \tsr{X}_{(i_1,i_2,\ldots,i_m)}\bigodot_{j\in {\inti{1}{N}}\setminus\{i_1,i_2,\ldots, i_m\}}\mat{A}^{(j)}.
\label{eq:tensors-cp} 
\end{equation}
Elementwise,
\begin{align*}
\tsr{M}^{(i_1,i_2,\ldots,i_m)}(x_{i_1},x_{i_2},\ldots,x_{i_m},k) =
\sum_{\{x_1,\ldots, x_N\}\setminus \{x_{i_1},x_{i_2},\ldots,x_{i_m}\}}\tsr{X}(x_1,\ldots,x_N)\prod_{r\in\{1,\ldots,N\} \setminus \{i_1,i_2,\ldots,i_m\}} \mat{A}^{(r)}(x_r,k),
\end{align*}
where $\tsr{M}^{(1,\ldots, N)}$ is the input tensor $\tsr{X}$. 
The first level contractions (contractions between the input tensor and one factor matrix) can be done via matrix multiplications between the reshaped input tensor and the factor matrix.
These contractions have a cost of $O\left(s^NR\right)$ and are generally the most time-consuming part of ALS. Other contractions (transforming one intermediate into another intermediate) can be done via batched matrix-vector products, and the complexity of an $i$th level contraction is $O\left(s^{N+1-i}R\right)$. Because two first level contractions are necessary for the construction of tree dimension tree, as is illustrated in Figure~\ref{fig:als_cp3},\ref{fig:als_cp4}, to calculate all the $\textbf{\M}^{(n)}$ in one ALS sweep, to leading order in $s$, the computational complexity is $4s^{N}R.$

For Tucker-ALS, The \textit{Tensor Times Matrix-chain} or TTMc that computes each $\tsr{Y}^{(n)}$ is the main computational bottleneck of Tucker-ALS~\cite{kaya2016high} and can also be amortized by the dimension tree. The intermediates for Tucker dimension tree are the partially contracted TTMc, $\tsr{Y}^{(i_1,i_2,\ldots,i_m)}$, defined as follows,
\[
\tsr{Y}^{(i_1,i_2,\ldots,i_m)} = \tsr{X}\bigtimes_{j\in {\inti{1}{N}}\setminus\{i_1,i_2,\ldots, i_m\}}\mat{A}^{(j)}{}^T,
\]
where $\tsr{X}$ is contracted with all the matrices $\mat{A}^{(j)}$ except $\mat{A}^{(i_1)},\ldots,\mat{A}^{(i_m)}$.
Each contraction can be done via matrix multiplications, and the complexity of an $i$th level contraction is $O\left(s^{N+1-i}R^i\right)$. Similar to CP-ALS, to calculate all the $\tsr{Y}^{(n)}$ in one ALS sweep, to leading order in $s$, the computational complexity is $4s^{N}R.$
}

\section{Pairwise Perturbation Algorithms}
\label{sec:pp}
We now introduce a pairwise perturbation (PP) algorithm to accelerate the ALS procedure when the iterative optimization steps are approaching a local minimum.
{We first derive the approximation for order three tensors, then generalize the algorithm to order $N$ tensors.}
The key idea of the pairwise perturbation method is to compute \textit{pairwise perturbation operators}, which correlate a pair of factor matrices.
These tensors are then used to repeatedly update the quadratic subproblems for each tensor.
As we will show, these updates are provably accurate if the factor matrices do not change significantly since their state at the time of formation of the pairwise perturbation operators.

\subsection{Pairwise Perturbation for Order Three Tensors}

 \begin{figure}[htb]
\centering
\subfloat[ALS dimension tree with $N=3$]{\includegraphics[width=.32\textwidth, keepaspectratio]{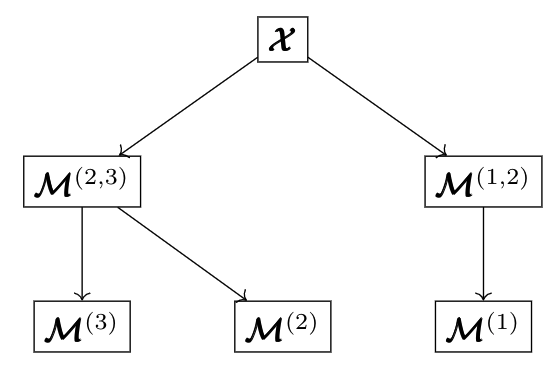}\label{fig:als_cp3}}
\subfloat[ALS dimension tree with $N=4$]{\includegraphics[width=.4\textwidth, keepaspectratio]{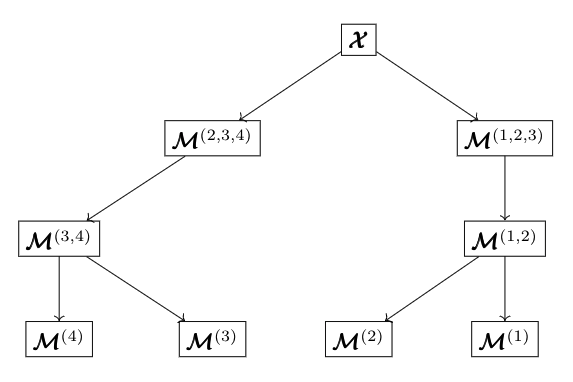}\label{fig:als_cp4}}

\subfloat[PP dimension tree with $N=3$]{\includegraphics[width=.32\textwidth, keepaspectratio]{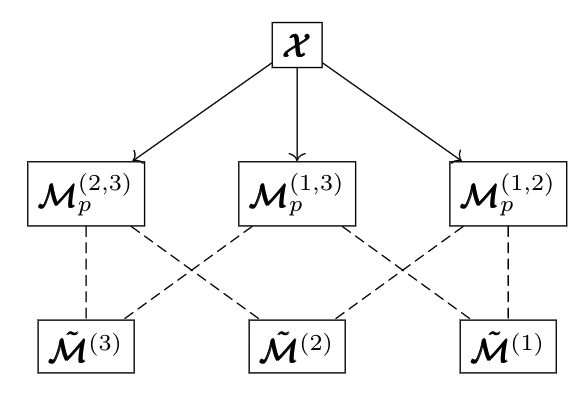}\label{fig:pp_cp3}}
\subfloat[PP dimension tree with $N=4$]{\includegraphics[width=.55\textwidth, keepaspectratio]{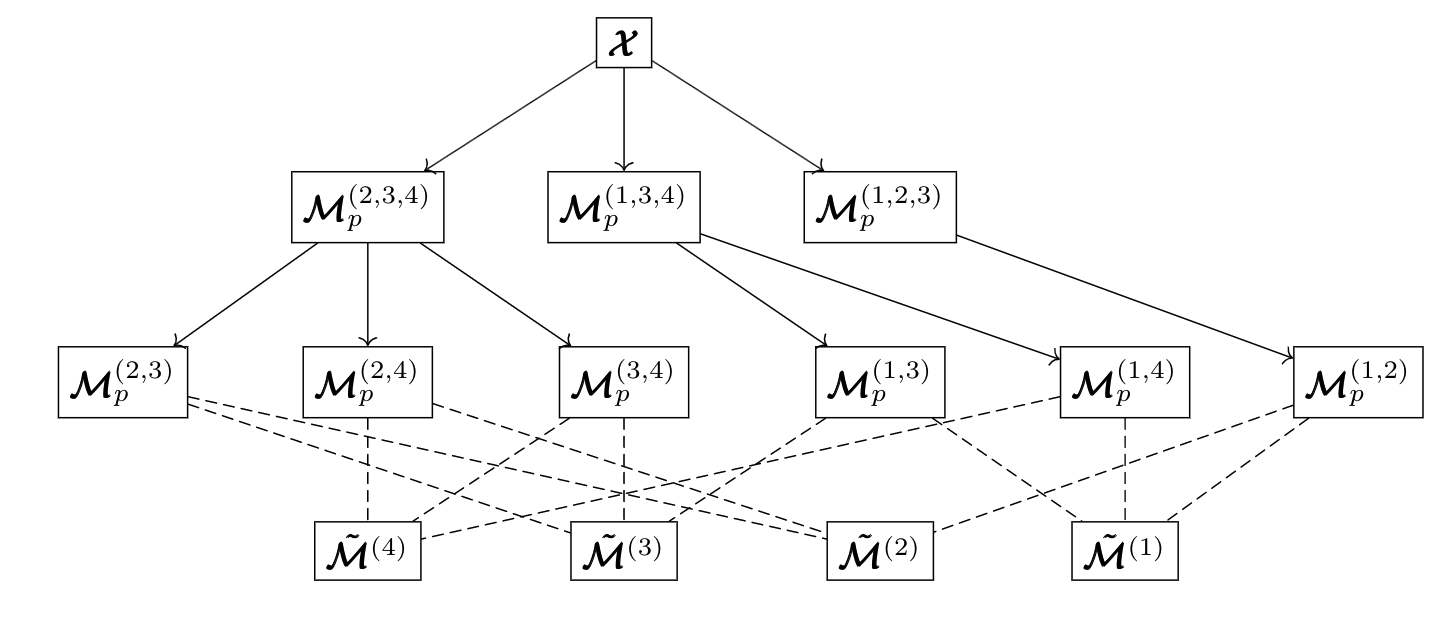}\label{fig:pp_cp4}}
\caption{{
Dimension trees for ALS and pairwise perturbation. In (c)(d), the solid arrows denote the data dependencies in building pairwise perturbation operators, and is calculated in the PP initialization step. The dashed lines denote the data dependencies in the PP approximated step calculations.
}
}
\label{fig:dt}
\end{figure}

\subsubsection{CP-ALS}
The pairwise perturbation procedure for CP-ALS approximates the MTTKRP outputs. Consider an order three equi-dimensional tensor with size in each mode $s$ and CP rank $R$, the first mode MTTKRP can be expressed as 
$ \mat{M}^{(1)} = \mat{X}_{(1)}\left(\mat{A}^{(2)} \odot \mat{A}^{(3)}\right)$. Let $\mat{A}_p^{(n)}$ denote the $\mat{A}^{(n)}$ calculated with regular ALS at some number of sweeps prior to the current one. Then $\mat{A}^{(n)}$ at the current sweep can be expressed as 
 \[
   \mat{A}^{(n)} = \mat{A}_p^{(n)} + d\mat{A}^{(n)},
 \]
 and $\mat{M}^{(1)}$ can be expressed as 
  \begin{equation}
 \label{eq:cp-pp-m1}
 \mat{M}^{(1)} =
 \underbrace{
 \mat{X}_{(1)}\left(\mat{A}_p^{(2)} \odot \mat{A}_p^{(3)}\right) + 
  \mat{X}_{(1)}\left(\mat{A}_p^{(2)} \odot d\mat{A}^{(3)}\right) + 
  \mat{X}_{(1)}\left(d\mat{A}^{(2)} \odot \mat{A}_p^{(3)} \right)
  }_{\mat{U}^{(1)}}
  + 
  \mat{X}_{(1)}\left(d\mat{A}^{(2)} \odot d\mat{A}^{(3)}\right).
 \end{equation}
The pairwise perturbation procedure for CP-ALS approximates $\mat{M}^{(1)}$ with $\Tilde{\mat{M}}^{(1)}=\mat{U}^{(1)} + \mat{V}^{(1)}$, where $\mat{U}^{(1)}$ is the first three terms in Equation~\ref{eq:cp-pp-m1} and $\mat{V}^{(1)}$ approximates the final term through approximating the input tensor $\tsr{X}$ by its approximate CP decomposition,
{\small
\begin{align*}
\mat{X}_{(1)}\left(d\mat{A}^{(2)} \odot d\mat{A}^{(3)}\right) 
\approx \mat{V}^{(1)}  =\left(\cpbrak{\mat{A}^{(1)}, \mat{A}^{(2)}, \mat{A}^{(3)} }\right)_{(1)}\left(d\mat{A}^{(2)} \odot d\mat{A}^{(3)}\right) 
= \mat{A}^{(1)} \bigg( \left(\mat{A}^{(2)T} d\mat{A}^{(2)}\right) * \left(\mat{A}^{(3)T} d\mat{A}^{(3)}\right) \bigg),
\end{align*}
}
which can be calculated with the cost of $O\left(sR^2\right)$. The remaining error term is 
\[\left(\tsr{X} - \cpbrak{\mat{A}^{(1)}, \mat{A}^{(2)}, \mat{A}^{(3)} }\right)_{(1)}\left(d\mat{A}^{(2)} \odot d\mat{A}^{(3)}\right).\]
Therefore, the norm of the error scales as $O\left(C\epsilon^2\right)$ if each $\tnrm{d\mat{A}^{(i)}}\leq \epsilon$ and the decomposition residual norm is bounded by $C$.

The approximated MTTKRP, $\Tilde{\mat{M}}^{(1)}$, can be rewritten as a function of $\tsr{M}_p^{(i_1,i_2,\ldots,i_m)}$, which is defined in the same way as $\tsr{M}^{(i_1,i_2,\ldots,i_m)}$ in Equation~\ref{eq:tensors-cp} except that $\tsr{X}$ is contracted with $\mat{A}_p^{(j)}$ for $j\in \inti{1}{N} \setminus \{i_1,i_2,\ldots,i_m\}$,
\begin{align*}
    \Tilde{\mat{M}}^{(1)}(x,k) =  \mat{M}_p^{(1)}(x,k) + \sum_{y=1}^{s}\tsr{M}_p^{(1,2)}(x,y,k) d\mat{A}^{(2)}(y,k) + \sum_{y=1}^{s}\tsr{M}_p^{(1,3)}(x,y,k) d\mat{A}^{(3)}(y,k) + \mat{V}^{(1)}(x,k).
\end{align*}
PP has two steps: the initialization step, where the terms $\mat{M}_p^{(1)}$ and pairwise perturbation operators $\tsr{M}_p^{(1,2)}$, $\tsr{M}_p^{(1,3)}$ are calculated, and the approximated step, where these terms are used in the equation above to calculate $\Tilde{\mat{M}}^{(1)}$. Using the dimension tree structure shown in Figure~\ref{fig:pp_cp3}, the initialization step for all the three modes can be done with the leading order cost of $6s^3R$, 1.5X the cost of the ALS dimension tree. Each approximated step for all the modes can be done with the leading order cost of $3\left(4s^2R + 6sR^2\right)$ overall.

\subsubsection{Tucker-ALS}
We derive a similar pairwise perturbation algorithm for order three Tucker-ALS.
The first mode of TTMc can be expressed as 
$ \tsr{Y}^{(1)} = \tsr{Y} \times_2 \mat{A}^{(2)T} \times_3 \mat{A}^{(3)T}$. PP approximates $\tsr{Y}^{(1)}$ with 
\[
\Tilde{\tsr{Y}}^{(1)} = \tsr{X} \times_2 \mat{A}_p^{(2)T} \times_3 \mat{A}_p^{(3)T} + \tsr{X} \times_2 \mat{A}_p^{(2)T} \times_3 d\mat{A}^{(3)T} + \tsr{X} \times_2 d\mat{A}^{(2)T} \times_3 \mat{A}_p^{(3)T},
\]
and the error term is $\tsr{X} \times_2 d\mat{A}^{(2)T} \times_3 d\mat{A}^{(3)T}$.
The expression above can be rewritten as a function of $\tsr{Y}_p^{(i_1,i_2,\ldots,i_m)}$, which is defined in the same way as $\tsr{Y}^{(i_1,i_2,\ldots,i_m)}$ except that $\tsr{X}$ is contracted with $\mat{A}_p^{(j)}$ for $\tsr{Y}_p^{(i_1,i_2,\ldots,i_m)}$,
\[
\Tilde{\tsr{Y}}^{(1)} = \tsr{Y}_p^{(1)} + \tsr{Y}_p^{(1,2)} \times_2 d\mat{A}^{(2)T} + \tsr{Y}_p^{(1,3)} \times_3 d\mat{A}^{(3)T} .
\]
Using the dimension tree structure, the initialization step for all the three modes can be done with the leading order cost of $6s^3R$, 1.5X the cost of the ALS dimension tree. Each approximated step for all the modes can be done with the leading order cost of $12s^2R^2$ overall.

\subsection{General Pairwise Perturbation Algorithm} We now generalize PP to order $N$ tensors.

\subsubsection{CP-ALS}
The MTTKRP in $n$th mode, $\mat{M}^{(n)}$, can be expressed as 
  \[ 
    \mat{M}^{(n)}=\mat{X}_{(n)}\bigodot_{i=1,i \neq n}^N \left(\mat{A}_p^{(i)} + d\mat{A}^{(i)}\right).
 \]
 $\mat{M}^{(n)}$ can be expressed as a function of $\tsr{M}_p^{(i_1,i_2,\ldots,i_m)}$ as
 follows,
\begin{align*}
   \mat{M}^{(n)}(y,k)
    =&\mat{M}_p^{(n)}(y,k) + \sum_{i=1,i\neq n}^{N}\sum_{x=1}^{s_i}\pmb{\mathcal{M}}_p^{(i,n)}(x,y,k) d\mat{A}^{(i)}(x,k) + \\
    &\sum_{i=1,i\neq n}^{N}\sum_{j=i+1,j\neq n}^{N}\sum_{x=1}^{s_i}\sum_{z=1}^{s_j}\tsr{M}_p^{(i,j,n)}(x,z,y,k) d\mat{A}^{(i)}(x,k)d\mat{A}^{(j)}(z,k) + \cdots.
\end{align*}
From the above expression we observe that, except the first two terms, all terms include the contraction between tensor $\tsr{M}_p^{(i_1,i_2,\ldots,i_m)}$ and at least two matrices $d\mat{A}^{(i)}$, so that their norm scales quadratically with the norm of the perturbative updates $d\mat{A}^{(i)}$. Therefore, their norm scales as $O\left(\epsilon^2\right)$ if $\tnrm{d\mat{A}^{(i)}}\leq \epsilon$.
{The pairwise perturbation algorithm obtains an effective approximation by keeping the first two terms (these terms are illustrated in Figure~\ref{fig:pp_cp4} for an order four tensor), and approximating the input tensor using its approximate CP decomposition in the third term to lower the error to a greater extent,
\begin{equation}
\label{eq:ppupdate}
   \Tilde{\mat{M}}^{(n)}(y,k)
    =\mat{M}_p^{(n)}(y,k) + \sum_{i=1,i\neq n}^{N}\sum_{x=1}^{s_i}\tsr{M}_p^{(i,n)}(x,y,k) d\mat{A}^{(i)}(x,k) + \sum_{i,j=1,i,j\neq n, i\neq j}^{N}\mat{V}^{(n,i,j)}(y,k),  
\end{equation}
\vspace{-2mm}
\begin{align*}
 \text{where} \quad
    \mat{M}_p^{(n)}=\mat{X}_{(n)}\bigodot_{i=1,i \neq n}^N \mat{A}_p^{(i)}, \quad
    \tsr{M}_p^{(i,n)}=\tsr{X}_{(i,n)}\bigodot_{j\in\inti{1}{N}\setminus\{i,n\}}^N \mat{A}_p^{(j)}, \\ 
    \text{and} \quad 
    \mat{V}^{(n,i,j)} = \mat{A}^{(n)} \bigg(  \left(\mat{A}^{(i)T} d\mat{A}^{(i)}\right) * \left(\mat{A}^{(j)T} d\mat{A}^{(j)}\right) * \bigast_{k=1, k\neq i,j,n}^{N}\left(\mat{A}^{(k)T}\mat{A}^{(k)}\right) \bigg).
\end{align*}
We evaluate the benefit of including the $\mat{V}^{(n,i,j)}$ correction in Section~\ref{sec:numpyexp}.
Given $\tsr{M}_p^{(i,n)}$ and $\mat{M}_p^{(n)}$, calculation of $\Tilde{\mat{M}}^{(n)}$ for $n\in \inti{1}{N}$ requires $2N^2\left(s^2R+sR^2\right)$ operations overall. Further, we show in Section~\ref{sec:err-cp} that the column-wise relative approximation error of $\Tilde{\mat{M}}^{(n)}$ with respect to ${\mat{M}}^{(n)}$ is small if each
 \(
\vnrm{d\vcr{a}_k^{(n)}}/\vnrm{\vcr{a}_k^{(n)}} \text{   for } n\in \inti{1}{N}, k\in\inti{1}{R}
 \)
is sufficiently small. Algorithm~\ref{alg:cp_als_pp} presents the PP-CP-ALS method described above.
}

\begin{algorithm}
    \caption{\textbf{PP-CP-ALS}: Pairwise perturbation procedure for CP-ALS}
\label{alg:cp_als_pp}
\begin{algorithmic}[1]
\small
\STATE{\textbf{Input:} tensor $\tsr{X}\in\mathbb{R}^{s_1\times\cdots\times s_N}$, %rank $R$,
       stopping criteria \modify{$\Delta$}, PP tolerance $\epsilon<1$}
\STATE{Initialize $[\![ \textbf{A}^{(1)}, \ldots , \textbf{A}^{(N)} ]\!]$ as uniformly distributed random matrices within $[0,1]$, initialize $\mat{G}^{(n)}, d\mat{A}^{(n)}\leftarrow \mat{A}^{(n)}$, $ \mat{S}^{(n)} \leftarrow\mat{A}^{(n)T}\mat{A}^{(n)}$ for $i\in\{1,\ldots,N\}$
}
\WHILE{$\sum_{i=1}^{N}{\fnrm{\mat{G}^{(i)}}}>\modify{\Delta\|\tsr{X}\|_F}$}
  \IF{$\forall ~i \in \inti{1}{N}, {\fnrm{d\mat{A}^{(i)}}}<\epsilon{\fnrm{\mat{A}^{(i)}}}$}
    \STATE{Compute $\tsr{M}^{(i,n)}_p,\mat{M}^{(n)}_p$ for $i,n\in \inti{1}{N}$ via dimension tree in Section~\ref{subsec:cost}}
    \FOR{\texttt{$n\in \inti{1}{N} $}}
      \STATE{$\mat{A}_p^{(n)} \gets \mat{A}^{(n)}$, $d\mat{A}^{(n)} \gets \mat{O}$}
      %\STATE{$\forall y\in\inti{1}{s_n},k\in\inti{1}{R}, \mat{M}^{(n)}_p(y,k) = \sum_{x=1}^s_i\tsr{M}^{(i,n)}_p(x,y,k)\mat{A}^{(i)}(x,k)$ with any choice of $i\neq n$.}
    \ENDFOR
    \WHILE{$\sum_{i=1}^{N}{\fnrm{\mat{G}^{(i)}}} > \modify{\Delta\|\tsr{X}\|_F}$ and $\forall ~i \in \inti{1}{N}, \modify{\fnrm{d\mat{A}^{(i)}}<\epsilon\fnrm{\mat{A}^{(i)}}}$}
      \FOR{\texttt{$n\in \inti{1}{N} $}}
        \STATE{$\boldsymbol{\Gamma}^{(n)}\leftarrow\mat{S}^{(1)}\ast\cdots\ast \mat{S}^{(n-1)}\ast \mat{S}^{(n+1)}\ast\cdots\ast \mat{S}^{(N)} $}
        % \STATE{$\Tilde{\mat{M}}^{(n)}(y,k)
        %   =\mat{M}_p^{(n)}(y,k) + \sum_{i=1,i\neq n}^{N}\sum_{x=1}^{s_i}\tsr{M}_p^{(i,n)}(x,y,k) d\mat{A}^{(i)}(x,k)$\TODO{there should be sum over y and k here}}
        \STATE{
        Update $\Tilde{\mat{M}}^{(n)}$ based on Equation~\ref{eq:ppupdate}
         }
      %\STATE{$ \mat{\tilde{M}}^{(n)}\leftarrow\mat{X}_{(n)}(\mat{A}^{(1)} \odot \cdots \odot \ \mat{A}^{(n-1)} \odot \ \mat{A}^{(n+1)} \odot \cdots \odot \mat{A}^{(N)})$ }
        \STATE{$   \mat{A}^{(n)}_\text{new} \leftarrow \Tilde{\mat{M}}^{(n)}\boldsymbol{\Gamma}^{(n)}{}^{\dagger} $}
        \STATE{$   \mat{G}^{(n)}\leftarrow (\mat{A}^{(n)}-\mat{A}^{(n)}_\text{new})\mat{\Gamma}^{(n)}$}
        \STATE{$   \mat{A}^{(n)} \leftarrow\mat{A}^{(n)}_\text{new}$}
        \vspace{.01in}
        \STATE{$   \mat{S}^{(n)} \leftarrow {\mat{A}^{(n)}}{}^T\mat{A}^{(n)} $}
        \STATE{$d\mat{A}^{(n)} = \mat{A}^{(n)}_\text{new}-\mat{A}^{(n)}_p$}
      \ENDFOR
    \ENDWHILE
  \ENDIF
  \STATE{Perform regular ALS sweep as in Algorithm~\ref{alg:cp_als}, taking $d\mat{A}^{(n)} \gets \mat{A}^{(n)}_\text{new}-\mat{A}^{(n)}$ for each $n\in\inti{1}{N}$}
    %\FOR{\texttt{$n\in [N] $}}
    %  \STATE\label{line3}{$\boldsymbol{\Gamma}^{(n)}\leftarrow\mat{S}^{(1)}\ast\cdots\ast \mat{S}^{(n-1)}\ast \mat{S}^{(n+1)}\ast\cdots\ast \mat{S}^{(N)} $}
    %  \STATE{$ \mat{\M}^{(n)}\leftarrow\mat{X}_{(n)}(\mat{A}^{(1)} \odot \cdots \odot \ \mat{A}^{(n-1)} \odot \ \mat{A}^{(n+1)} \odot \cdots \odot \mat{A}^{(N)})$ }
    %  \STATE{$   d\mat{A}^{(n)} \leftarrow \mat{\M}^{(n)}\boldsymbol{\Gamma}^{(n)}{}^{\dagger} - \mat{A}^{(n)}  $}
    %  \STATE{$   \mat{G}^{(n)}\leftarrow d\mat{A}^{(n)}\mat{\Gamma}^{(n)}$}
    %  \STATE{$   \mat{A}^{(n)} \leftarrow\mat{A}^{(n)} + d\mat{A}^{(n)}$}
    %  \vspace{.01in}
    %  \STATE{$   \mat{S}^{(n)} \leftarrow {\mat{A}^{(n)}}^T\mat{A}^{(n)} $}
    %\ENDFOR
\ENDWHILE
\RETURN $[\![ \mat{A}^{(1)}, \ldots , \mat{A}^{(N)} ]\!]$
\end{algorithmic}
\end{algorithm}

\subsubsection{Tucker-ALS}
We derive a similar pairwise perturbation algorithm for Tucker-ALS.
Similar to the expression for $\mat{M}^{(n)}$ in CP-ALS, $\tsr{Y}^{(n)}$ can be expressed as 
\[ 
    \tsr{Y}^{(n)}=\tsr{X}\bigtimes_{i=1,i \neq n}^N \left(\mat{A}_p^{(i)}{}^T + d\mat{A}^{(i)}{}^T\right).
\]
The expression above can be rewritten as a function of $\tsr{Y}_p^{(i_1,i_2,\ldots,i_m)}$, 
 $$
\tsr{Y}^{(n)} {=} \tsr{Y}_p^{(n)} {+} \sum_{i=1,i\neq n}^{N}\tsr{Y}_p^{(i,n)}\times_i d\mat{A}^{(i)}{}^T
+ \sum_{i=1,i\neq n}^{N}\sum_{j=i+1,j\neq n}^{N}\tsr{Y}_p^{(i,j,n)}\times_i d\mat{A}^{(i)}{}^T\times_j d\mat{A}^{(j)}{}^T + {\cdots} .
 $$
The pairwise perturbation algorithm again takes only the first order terms in $d\mat{A}^{(i)}$, computing
{\small
\begin{align*}
   \Tilde{\tsr{Y}}^{(n)}
    =\tsr{Y}_p^{(n)} + \sum_{i=1,i\neq n}^{N}\tsr{Y}_p^{(i,n)}\times_i d\mat{A}^{(i)}{}^T, 
\quad\text{where} \quad 
    \tsr{Y}_p^{(n)}
    =\tsr{X}\bigtimes_{l=1,l\neq n}^{N} \mat{A}_p^{(l)}{}^T
\quad \text{and} 
\quad    \tsr{Y}_p^{(i,n)}
    =\tsr{X}\bigtimes_{j\in \inti{1}{N}\setminus\{i,n\}} \mat{A}_p^{(j)}{}^T.
 \end{align*}
 }
Given $\tsr{Y}_p^{(i,n)}$ and $\tsr{Y}_p^{(n)}$, $\Tilde{\tsr{Y}}^{(n)}$ for $n\in\inti{1}{N}$ can be calculated with $2N^2s^2R^{N-1}$ cost overall.
In Section~\ref{sec:error-tucker}, we show that the relative Frobenius norm approximation
 error of $\Tilde{\pmb{\mathcal{Y}}}^{(n)}$ with respect to ${\pmb{\mathcal{Y}}}^{(n)}$ is small, so long as each
 \(
\fnrm{d\textbf{A}^{(n)}}/\fnrm{ \textbf{A}^{(n)}}
 \)
is sufficiently small. Algorithm~\ref{alg:tucker_als_pp} presents the PP-Tucker-ALS method described above.

\begin{algorithm}
    \caption{\textbf{PP-Tucker-ALS}: Pairwise perturbation procedure for Tucker-ALS}
\label{alg:tucker_als_pp}
\begin{algorithmic}[1]
\small
\STATE{\textbf{Input:} tensor $\tsr{X}\in\mathbb{R}^{s_1\times\cdots\times s_N}$, %rank $R$,
decomposition ranks $\{R_1,\ldots,R_N\}$, 
       stopping criteria \modify{ $\Delta$}, PP tolerance $\epsilon$}
\STATE{Initialize $[\![\tsr{G}; \mat{A}^{(1)}, \ldots , \mat{A}^{(N)} ]\!]$ using HOSVD,
initialize $d\mat{A}^{(n)}\leftarrow \mat{A}^{(n)}$ for $i\in\{1,\ldots,N\}$, initialize $\tsr{F}\leftarrow\tsr{G}$}
%\COMMENT{\fontfamily{qcr}\selectfont\small This text uses a different font typeface}
\WHILE{$\fnrm{\tsr{F}}>\modify{\Delta \|\tsr{X}\|_F}$} 
  \IF{$\forall ~i \in \inti{1}{N}, {\fnrm{d\mat{A}^{(i)}}}<\epsilon{\fnrm{\mat{A}^{(i)}}}$}
    \STATE{Compute $\tsr{Y}^{(i,n)}_p,\tsr{Y}^{(n)}_p$ for $i,n\in \inti{1}{N}$ via dimension tree in Section~\ref{subsec:cost}}
    \FOR{\texttt{$n\in \inti{1}{N} $}}
      \STATE{$\mat{A}_p^{(n)} \gets \mat{A}^{(n)}$, $d\mat{A}^{(n)} \gets \mat{O}$}
      %\STATE{$\forall y\in\inti{1}{s_n},k\in\inti{1}{R}, \mat{M}^{(n)}_p(y,k) = \sum_{x=1}^s_i\tsr{M}^{(i,n)}_p(x,y,k)\mat{A}^{(i)}(x,k)$ with any choice of $i\neq n$.}
    \ENDFOR
    \WHILE{$\fnrm{\tsr{F}}>\modify{\Delta\|\tsr{X}\|_F}$ and \modify{$\fnrm{\tsr{F}}<\modify{\epsilon\|\tsr{X}\|_F}$}}
      \FOR{\texttt{$n\in \inti{1}{N} $}}
        \STATE{$ \tsr{Y}^{(n)}\leftarrow\pmb{\mathcal{Y}}_p^{(n)} + \sum_{i=1,i\neq n}^{N}\pmb{\mathcal{Y}}_p^{(i,n)}\times_i d\textbf{A}^{(i)}$ }
        \STATE{$   \textbf{A}^{(n)} \leftarrow R_n$ leading left singular vectors of $\textbf{Y}^{(n)}_{(n)}$}
        \STATE{$   d\textbf{A}^{(n)} \leftarrow\textbf{A}^{(n)}-\textbf{A}_p^{(n)} $}
      \ENDFOR
        \STATE{$\tsr{G}_\text{new}\leftarrow
  \tsr{Y}^{(N)}\times_{N}\mat{A}^{(N)T}$}
        \STATE{$\tsr{F}\leftarrow\pmb{\mathcal{G}}_\text{new}-\pmb{\mathcal{G}}$}
        \STATE{$\tsr{G}\leftarrow\pmb{\mathcal{G}}_\text{new}$}
    \ENDWHILE
  \ENDIF
  \STATE{Perform regular ALS sweep as in Algorithm~\ref{alg:tucker-als}, taking $d\mat{A}^{(n)} \gets \mat{A}^{(n)}_\text{new}-\mat{A}^{(n)}$ for each $n\in\inti{1}{N}$}
        \STATE{$\tsr{G}_\text{new}\leftarrow
  \tsr{Y}^{(N)}\times_{N}\mat{A}^{(N)T}$}
        \STATE{$\tsr{F}\leftarrow\pmb{\mathcal{G}}_\text{new}-\pmb{\mathcal{G}}$}
        \STATE{$\tsr{G}\leftarrow\pmb{\mathcal{G}}_\text{new}$}
\ENDWHILE
\RETURN $[\![ \tsr{G}; \mat{A}^{(1)}, \ldots , \mat{A}^{(N)} ]\!]$
\end{algorithmic}
\end{algorithm}

\subsubsection{Dimension Trees for Pairwise Perturbation Operators}
\label{subsec:cost}

Computation of the pairwise perturbation operators $\tsr{M}^{(i,n)}_p$ and of $\tsr{M}^{(n)}_p$ can benefit from amortization of common tensor contraction (Khatri-Rao product or multilinear multiplication) subexpressions.
In the context of ALS, this technique is known as dimension trees and has been successfully employed to accelerate TTMc and MTTKRP.
The same trees can be used for both CP and Tucker, although the tensor intermediates and contraction operations are different (Khatri-Rao products for CP and multilinear multiplication for Tucker).
We describe the trees for CP decomposition, computing each $\tsr{M}^{(i,n)}_p$ and $\tsr{M}^{(n)}_p$.
Figure~\ref{fig:pp_cp3},\ref{fig:pp_cp4} describes the dimension tree for $N=3,4$.
Our tree constructions assume that the tensors are equidimensional, if this is not the case, the largest dimensions should be contracted first.

The main goal of the dimension tree is to perform a minimal number of contractions to obtain each $\tsr{M}^{(i,n)}_p$.
Each matrix $\tsr{M}^{(n)}_p$ can be simply obtained by a contraction with $\tsr{M}^{(i,n)}_p$ for any $i\neq n$.
Each level of the tree for $l=1,\ldots, N-1$ should contain intermediate tensors containing $N-l+1$ uncontracted modes belonging to the original tensor (the root is the original tensor $\tsr{X}=\tsr{M}^{(1,\ldots, N)}$).
For any pair of the original tensor modes, each level should contain an intermediate for which these modes are uncontracted.
Since the leaves at level $l=N-1$ have two uncontracted modes, they will include each $\tsr{M}^{(i,n)}_p$ for $i<n$ %($\tsr{M}^{(i,n)}_p=\tsr{M}^{(n,i)}_p$) 
and have ${N \choose 2 }$ tensors overall.
At level $l$ it then suffices to compute ${l+1 \choose 2}$ tensors $\tsr{M}^{(i,j,l+2,l+3,\ldots, N)}, \forall i,j\in \inti{1}{l+1}, i< j$.
Each $\tsr{M}^{(i,j,l+2,l+3,\ldots, N)}$ can be computed by contraction of $\tsr{M}^{(s,t,v,l+2,l+3,\ldots, N)}$ and $\mat{A}^{(w)}$ where $\{s,t,v\}=\{i,j,w\}$ with $w =\max_{w\in \{l-1,l,l+1\}\setminus\{i,j\}}(w)$ and $s<t<v$.

\begin{table}[!htb]
\caption{Cost comparison between pairwise perturbation algorithm and ALS dimension tree algorithm for CP and Tucker decompositions.} 
\label{table:compare}
\centering
\begin{adjustbox}{width=0.55\textwidth}
\begin{tabular}{|c|c|c|c|}
  \hline
 & DT ALS
 & PP initialization step
 & PP approximated step
 \\ 
  \hline
CP & $4s^NR$ & $6s^NR$ & {$2N^2(s^2R+sR^2)$} \\ 
  \hline
  Tucker & $4s^NR$ 
  & $6s^NR$
  & $2N^2s^2R^{N-1}$  \\ 
   \hline
\end{tabular}
\end{adjustbox}
\end{table} 
 
The construction of pairwise perturbation operators for CP decomposition costs
\begin{align*} 
2R\sum_{l=2}^{N-1}{l+1 \choose 2}s^{N-l+2} = 6s^NR+12s^{N-1}R+O\left(s^{N-2}R^2\right).
\end{align*}
The cost to form pairwise perturbation operators for Tucker decomposition is
\begin{align*} 
2\sum_{l=2}^{N-1}{l+1 \choose 2}s^{N-l+2}R^{l-1}
= 6s^NR + 12s^{N-1}R^{2} + O\left(s^{N-2}R^3\right).
\end{align*}
We summarize the leading order computational costs for both algorithms in Table~\ref{table:compare}. The PP initialization step, which involves the PP operator construction and does one more first level contraction, is computationally 1.5X more expensive than the ALS algorithm.

As for the memory footprint,
%The memory footprint of PP is higher than ALS. 
ALS with the best choice of dimension tree requires intermediate tensors of size $O\left(s^{\lceil N/2\rceil}R\right)$. As an example, for the order four case shown in Figure~\ref{fig:als_cp4}, the first and second level contractions are combined to save memory, so that $\tsr{M}^{(3,4)}$ and $\tsr{M}^{(1,2)}$ are stored, both of size $O\left(s^{2}R\right)$. The PP dimension tree described above and in Figure~\ref{fig:pp_cp4} needs at least $O\left(s^{N-1}R\right)$ auxiliary memory to store the first level contraction results. The memory needed for PP can be reduced similar to ALS. 
For example, when calculating the PP operator $\tsr{M}_p^{(1,3)}$ for an order four tensor, we can 
bypass the first level contraction and save its memory via directly performing a contraction between the input tensor and the Khatri-Rao product output $\mat{A}^{(1)} \odot \mat{A}^{(3)}$.
Combining the first $l\leq N-2$ levels of contractions requires $O\left(s^{N-l}R+N^2s^2R\right)$ auxiliary memory, but incurs a cost of $O\left(l^2s^{N-1}R\right)$.

%\section{Tensor Conditioning}
%\label{sec:cond}
%\input{cond}

\section{Error Analysis}
\label{sec:error}
In this section, we formally bound the approximation error of the pairwise perturbation algorithm relative to ALS.
We show that quadratic optimization problems computed by pairwise perturbation differ only slightly from ALS so long as the factor matrices have not changed significantly since the construction of the pairwise perturbation operators.

\subsection{CP-ALS}
\label{sec:err-cp}

\begin{algorithm}
    \caption{\textbf{CP-ALS}: Reinterpreted ALS procedure for CP decomposition}
\label{alg:cp_als2}
\begin{algorithmic}[1]
\small
\STATE{\textbf{Input: }Tensor $\tsr{X}\in\mathbb{R}^{s_1\times\cdots\times s_N}$, 
%Tensor dimension $N$, Sizes in each dimension $\{s_1,\ldots,s_N\}$,
%Decomposition rank $R$, 
stopping criteria \modify{$\Delta$}}
\STATE{Initialize $[\![ \textbf{A}^{(1)}, \ldots , \textbf{A}^{(N)} ]\!]$ as uniformly distributed random matrices within $[0,1]$, initialize $\mat{G}^{(n)},\delta\mat{A}^{(n)}\leftarrow \mat{A}^{(n)}$, $\mat{S}^{(n)}\leftarrow \mat{A}^{(n)T}\mat{A}^{(n)}$ for $i\in\{1,\ldots,N\}$}
%\COMMENT{\fontfamily{qcr}\selectfont\small This text uses a different font typeface}
\FOR{\texttt{$n\in \inti{1}{N} $}}
\STATE{Update $ \textbf{\M}^{(n)}$ based on the dimension tree algorithm shown in Figure~\ref{fig:dt}}
\ENDFOR
\WHILE{$\sum_{i=1}^{N}{\fnrm{\mat{G}^{(i)}}}>\modify{\Delta\|\tsr{X}\|_F}$}
\FOR{\texttt{$n\in \inti{1}{N} $}}
\STATE{$\boldsymbol{\Gamma}^{(n)}\leftarrow\textbf{S}^{(1)}\ast\cdots\ast \textbf{S}^{(n-1)}\ast \textbf{S}^{(n+1)}\ast\cdots\ast \textbf{S}^{(N)} $}
%\STATE{$ \textbf{\M}^{(n)}\leftarrow\textbf{X}_{(n)}(\textbf{A}^{(1)} \odot \cdots \odot \ \textbf{A}^{(n-1)} \odot \ \textbf{A}^{(n+1)} \odot \cdots \odot \textbf{A}^{(N)})$ }
\STATE{$   \textbf{A}^{(n)}_\text{new} \leftarrow \textbf{\M}^{(n)}\boldsymbol{\Gamma}^{(n)}{}^{\dagger} $}
\STATE{$   \delta \mat{A}^{(n)} = \textbf{A}^{(n)}_\text{new} - \textbf{A}^{(n)}$}
\STATE{$   \mat{G}^{(n)}\leftarrow -\delta \mat{A}^{(n)}\mat{\Gamma}^{(n)}$}
\STATE{$   \textbf{A}^{(n)} \leftarrow\textbf{A}^{(n)}_\text{new}$}
\vspace{.01in}
\STATE{$   \textbf{S}^{(n)} \leftarrow {\textbf{A}^{(n)}{}^T}\textbf{A}^{(n)} $}
\FOR{\texttt{$m\in \inti{1}{N}, m\neq n $}}
    \STATE{Update $\mat{M}^{(m)}$ as $\mat{M}^{(m)}(x,k)
    =\mat{M}^{(m)}(x,k) + \sum_{y=1}^{s_n}\tsr{M}^{(m,n)}(x,y,k) \delta\mat{A}^{(n)}(y,k)$}
\ENDFOR
\ENDFOR
\ENDWHILE
\RETURN $[\![ \textbf{A}^{(1)}, \ldots , \textbf{A}^{(N)} ]\!]$
\end{algorithmic}
\end{algorithm}

To bound the error of pairwise perturbation, we view the ALS procedure for CP decomposition in terms of pairwise updates (Algorithm~\ref{alg:cp_als2}), pushing updates to least-squares problems of all tensors as soon as any one of them is updated.
This reformulation is algebraically equivalent to Algorithm~\ref{alg:cp_als}, but makes oracle-like use of $\tsr{M}^{(m,n)}$ (Equation~\ref{eq:tensors-cp}), recomputing which would increase the computational cost.
We can bound the error of the way pairwise perturbation propagates updates to any right-hand side $\mat{M}^{(m)}$ due to changes in any one of the other factor matrices $\delta\mat{A}^{(n)}$. We define the update $\mat{H}^{(m,n)}$ in terms of its columns,
\[
\vcr{h}^{(m,n)}_k(x) =
 \sum_{y=1}^{s_n} \tsr{M}^{(m,n)}(x,y,k) \delta{\mat{A}^{(n)}(y,k)}, \quad \text{where} \quad \delta \mat{A}^{(n)} = \mat{A}_\text{new}^{(n)}-\mat{A}^{(n)}.
\]
{Note that $\delta \mat{A}^{(n)}$ denotes the update of $n$th factor between two neighboring sweeps, which should be distinguished from $d\mat{A}^{(n)}$, denoting the perturbation of $n$th factor in PP.}
Based on the definition, the update of each $\mat{M}^{(m)}$ after an ALS sweep is the summation of $\mat{H}^{(m,n)}$ expressed as 
$
{\delta}\mat{M}^{(m)} = \sum_{n=1,n\neq m}^{N}\mat{H}^{(m,n)}. 
$

For simplicity, we first perform an error analysis for the case where the second order correction terms $\mat{V}^{(n,i,j)}$ are not included in PP. 
In Theorem~\ref{thm:cpgenerrbound}, we prove that when the column-wise norm of $d\mat{A}^{(n)}=\mat{A}^{(n)}-\mat{A}_{p}^{(n)}$ relative to the norm of $\mat{A}^{(n)}$ for $n\in\inti{1}{N}$ is small, the absolute error of column-wise results for $\mat{H}^{(m,n)}$ calculated from pairwise perturbation with respect to that calculated from exact ALS is also small. Corollary~\ref{thm:cperrbound_N3} provides a simple relative error bound for third-order tensors. 
Overall, these bounds demonstrate that pairwise perturbation should generally compute updates with small relative error with respect to the magnitude of the perturbation of the factor matrices since the setup of the pairwise operators.
However, this relative error can be amplified during other steps of ALS, which are ill-conditioned, i.e., can suffer from catastrophic cancellation (the same would hold for round-off error).

We then perform an error analysis for the case where the second order correction terms $\mat{V}^{(n,i,j)}$ are included in PP in Theorem~\ref{thm:cpgenerrbound_withsecond}. 
We show that the second order corrections can tighten the leading order error by a factor related to the CP decomposition accuracy.

% Note that for the analysis simplicity, the second order correction terms $\mat{V}^{(n,i,j)}$ in Equation~\ref{eq:ppupdate} are not considered in the error analysis of PP for CP-ALS. These terms can tighten the leading order error by a constant factor, but won't affect the asymptotic behavior of both absolute and relative errors.  

\begin{thm}
\label{thm:cpgenerrbound}
 For $k \in \{1,\ldots,R\}$, if $\vnrm{d\vcr{a}^{(l)}_k}/\vnrm{\vcr{a}^{(l)}_k}\leq
 \epsilon<1$ for all $l\in\{1,\ldots, N\}$,
the pairwise perturbation algorithm without second order corrections computes the update $\Tilde{\mat{H}}^{(1,N)}$ with columnwise error,
\begin{align*}
\vnrm{\Tilde{\vcr{h}}_k^{(1,N)}-\vcr{h}_k^{(1,N)}}  =& O(N\epsilon)\tnrm{{\tsr{\hat{T}}}}\prod_{j=2}^{N-1}\vnrm{\vcr{a}^{(j)}_k},
\end{align*}
where $\mat{H}^{(1,N)}$ is the update to the matrix $\mat{M}^{(1)}$ due to the change $\delta\mat{A}^{(N)}$ performed by a regular ALS sweep,  and $ \tsr{\hat{T}}=\tsr{X} \times_N \delta\mat{a}_k^{(N)T}$.
Analogous bounds hold for $\mat{H}^{(m,n)}$ for any $m,n\in\{1,\ldots, N\}$, $m\neq n$.
\end{thm}
\begin{proof}
The ALS update and approximated update are
\begin{equation}
    \vcr{h}^{(1,N)}_k =   {\tsr{\hat{T}}} \bigtimes_{i\in \{2,\ldots,N-1\}} \vcr{a}_k^{(i)T} \quad \text{and} \quad \Tilde{\vcr{h}}^{(1,N)}_k =   {\tsr{\hat{T}}} \bigtimes_{i\in \{2,\ldots,N-1\}} \left(\vcr{a}_k^{(i)T}-d\vcr{a}_k^{(i)T}\right).
\end{equation}
We can expand the error as
\begin{equation}\label{eq:err_expression_cpbound}
 \Tilde{\vcr{h}}_k^{(1,N)}{-} \vcr{h}^{(1,N)}_k{=} \sum_{S\subset \{2,\ldots,N-1\},S\neq \emptyset}{\tsr{\hat{T}}}  \bigtimes_{i\in \{2,\ldots,N-1\}} \vcr{v}_k^{(i)T}, \text{ where } \vcr{v}_k^{(i)}=\begin{cases} -d\vcr{a}_k^{(i)} &{:} i \in S \\ \vcr{a}_k^{(i)} &{:} i \notin S \end{cases}.   
\end{equation}
Consequently, we can upper-bound the error due to terms with $|S|=d$ by
\[
{N-2 \choose d} \epsilon^d \tnrm{{\tsr{\hat{T}}}}\prod_{j=2}^{N-1}\vnrm{\vcr{a}^{(j)}_k}
=
O(N\epsilon)^d \tnrm{{\tsr{\hat{T}}}}\prod_{j=2}^{N-1}\vnrm{\vcr{a}^{(j)}_k}.
\]
Therefore, the error bound when $|S|=d$ scales as $O(N\epsilon)^d$, and the leading order error is $O(N\epsilon)$.
\end{proof}
Note that this error bound involves $\tsr{\hat{T}}$, which is small in norm due to being constructed from contraction with $\delta\vcr{a}_k^{(N)}$.
Thus, the error norm generally scales as $O(\epsilon^2)$ relative to the norm of the original tensor $\tsr{X}$, since $O(N\epsilon)\tnrm{{\tsr{\hat{T}}}}\prod_{j=2}^{N-1}\vnrm{\vcr{a}^{(j)}_k}
=O(N\epsilon^2)\tnrm{{\tsr{X}}}\prod_{j=2}^{N-1}\vnrm{\vcr{a}^{(j)}_k}$.

\begin{cor}
\label{thm:cperrbound_N3}
For $N=3$, using the bounds from the proof of Theorem~\ref{thm:cpgenerrbound}, under the same assumptions, we obtain the absolute error bound,
\begin{align*}
\vnrm{\Tilde{\vcr{h}}_k^{(1,3)}-\vcr{h}_k^{(1,3)}}  \leq \tnrm{\mat{\hat{T}}}\vnrm{\vcr{a}^{(2)}_k}\epsilon,
\end{align*}
where $\mat{\hat{T}}=\tsr{X} \times_{3} \delta\mat{a}_k^{{(3)T}}$. Further, since $\vcr{h}_k^{(1,3)}=\mat{\hat{T}}\vcr{a}^{(2)}_k$, the relative error is bounded by
\begin{align*}
\frac{\vnrm{\Tilde{\vcr{h}}_k^{(1,3)}-\vcr{h}_k^{(1,3)}}}{\vnrm{\vcr{h}_k^{(1,3)}}}  \leq \kappa(\mat{\hat{T}})\epsilon.
\end{align*}
\end{cor}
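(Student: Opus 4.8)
The plan is to specialize the general error expansion from the proof of Theorem~\ref{thm:cpgenerrbound} to the case $N=3$, where the index set $\{2,\ldots,N-1\}$ collapses to the single element $\{2\}$, so that both the ALS update and its pairwise-perturbation approximation become simple matrix-vector products. Concretely, with $\hat{\tsr{T}} = \tsr{X}\times_3 \delta\vcr{a}_k^{(3)}$ an order-2 tensor (a matrix $\hat{\mat{T}}$), the formulas from the theorem's proof give $\vcr{h}_k^{(1,3)} = \hat{\mat{T}}\vcr{a}_k^{(2)}$ and $\Tilde{\vcr{h}}_k^{(1,3)} = \hat{\mat{T}}(\vcr{a}_k^{(2)} - d\vcr{a}_k^{(2)})$, so the error is exactly $\Tilde{\vcr{h}}_k^{(1,3)} - \vcr{h}_k^{(1,3)} = -\hat{\mat{T}}\, d\vcr{a}_k^{(2)}$.

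Next I would bound this difference directly: $\vnrm{\hat{\mat{T}}\, d\vcr{a}_k^{(2)}} \leq \tnrm{\hat{\mat{T}}}\vnrm{d\vcr{a}_k^{(2)}} = \tnrm{\hat{\mat{T}}}\vnrm{\vcr{a}_k^{(2)}}\epsilon_k^{(2)}$, using the definition $\epsilon_k^{(2)} = \vnrm{d\vcr{a}_k^{(2)}}/\vnrm{\vcr{a}_k^{(2)}}$. This matches the $|S|=1$ contribution in the general bound (with the factor $N-2 = 1$) and the $|S|\geq 2$ term vanishes since $2^{N-2}-N+1 = 0$ when $N=3$; one could note this for consistency but it is not needed for the self-contained argument. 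For the relative bound, I would divide through by $\vnrm{\vcr{h}_k^{(1,3)}} = \vnrm{\hat{\mat{T}}\vcr{a}_k^{(2)}}$ and observe that $\vnrm{\vcr{a}_k^{(2)}}/\vnrm{\hat{\mat{T}}\vcr{a}_k^{(2)}} \leq 1/\sigma_{\min}(\hat{\mat{T}})$ (restricting to the row space, or assuming $\hat{\mat{T}}$ has full column rank so this is finite), so that $\tnrm{\hat{\mat{T}}}\vnrm{\vcr{a}_k^{(2)}}/\vnrm{\hat{\mat{T}}\vcr{a}_k^{(2)}} \leq \tnrm{\hat{\mat{T}}}/\sigma_{\min}(\hat{\mat{T}}) = \kappa(\hat{\mat{T}})$, yielding the claimed relative error bound $\kappa(\hat{\mat{T}})\epsilon_k^{(2)}$.

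There is essentially no hard step here: the corollary is a direct specialization, and the only mild subtlety is the implicit nondegeneracy assumption on $\hat{\mat{T}}$ needed to make $\kappa(\hat{\mat{T}})$ and the division by $\vnrm{\vcr{h}_k^{(1,3)}}$ meaningful, together with the convention that $\vnrm{\vcr{a}_k^{(2)}} \le \kappa(\hat{\mat{T}})\vnrm{\hat{\mat{T}}\vcr{a}_k^{(2)}}/\tnrm{\hat{\mat{T}}}$ holds when $\vcr{a}_k^{(2)}$ lies in the row space of $\hat{\mat{T}}$ (automatic here since $\vcr{h}_k^{(1,3)}$ is nonzero). I would state this as a one-line remark rather than belabor it.
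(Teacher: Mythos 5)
Your proposal is correct and follows essentially the same route as the paper, which itself gives no separate argument beyond specializing the expansion in the proof of Theorem~\ref{thm:cpgenerrbound}: for $N=3$ the only nonempty subset is $S=\{2\}$, the error is exactly $-\hat{\mat{T}}\,d\vcr{a}_k^{(2)}$, and the two displayed bounds follow from submultiplicativity and the definition of $\kappa(\hat{\mat{T}})$. Your remark on the implicit nondegeneracy of $\hat{\mat{T}}$ (so that $\kappa(\hat{\mat{T}})$ is finite and the division is meaningful) is a reasonable point the paper leaves tacit, consistent with its convention that $\kappa(\mat{M})=\infty$ for rank-deficient or wide matrices.
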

From Theorem~\ref{thm:cpgenerrbound}, we can conclude that the relative error in computing any column update $\mat{h}^{(i,j)}_k$ is $O(\epsilon)$ when $\epsilon\ll 1$ and the correct update is sufficiently large, e.g., for $i=1$ and $j=N$, $\vnrm{\vcr{h}^{(1,N)}_k}=\Omega\Big(\tnrm{{\tsr{\hat{T}}}}\prod_{i=2}^{N-1}\vnrm{\vcr{a}^{(l)}_k}\Big)$.
When this is the case, we can also bound the error of the update to the columns of the right-hand sides ${\delta}\mat{M}^{(n)}$ formed in ALS, so long as the sum of the updates $\mat{H}^{(n,m)}$ for $m\neq n$ is not too small in norm relative to each update matrix.

We now perform analysis for the case where the second order corrections $\mat{V}^{(n,i,j)}$ are included in PP.
\begin{thm}
\label{thm:cpgenerrbound_withsecond}
 For $k \in \{1,\ldots,R\}$, if $\vnrm{d\vcr{a}^{(l)}_k}/\vnrm{\vcr{a}^{(l)}_k}\leq
 \epsilon<1$ for all $l\in\{1,\ldots, N\}$,
the pairwise perturbation algorithm with second order correction terms computes the update term $\Tilde{\mat{H}}^{(1,N)}$ with columnwise error,
\begin{align*}
\vnrm{\Tilde{\vcr{h}}_k^{(1,N)}-\vcr{h}_k^{(1,N)}}  =&
O(N\epsilon) \tnrm{\tsr{\hat{P}} - \tsr{\hat{T}}}\prod_{j=2}^{N-1}\vnrm{\vcr{a}^{(j)}_k} + O\left( (N\epsilon)^2\right)\tnrm{{\tsr{\hat{T}}}}\prod_{j=2}^{N-1}\vnrm{\vcr{a}^{(j)}_k} ,
\end{align*}
where $\tsr{\hat{P}}=\tsr{Z} \times_N \delta\mat{a}_k^{(N)T}$, and $\tsr{Z}$ denotes the approximate CP decomposition of $\tsr{X}$, $\cpbrak{\mat{A}^{(1)}, \ldots, \mat{A}^{(N)}}$.
$\mat{H}^{(1,N)}$ is the update to the matrix $\mat{M}^{(1)}$ due to the change $\delta\mat{A}^{(N)}$ performed by a regular ALS sweep,  and $ \tsr{\hat{T}}=\tsr{X} \times_N \delta\mat{a}_k^{(N)T}$.
Analogous bounds hold for $\mat{H}^{(m,n)}$ for any $m,n\in\{1,\ldots, N\}$, $m\neq n$.
\end{thm}
\begin{proof}
The ALS approximated update is
\begin{equation}
\Tilde{\vcr{h}}^{(1,N)}_k =   \tsr{\hat{T}} \bigtimes_{i\in \{2,\ldots,N-1\}} \left(\vcr{a}_k^{(i)T}-d\vcr{a}_k^{(i)T}\right)
+
\sum_{i\in\{2,\ldots,N-1\}}\tsr{\hat{P}} \times_i d\vcr{a}_k^{(i)T} \bigtimes_{j\in \{2,\ldots,N-1\}, j\neq i} \vcr{a}_k^{(j)T}.
\end{equation}
We can expand the error as
{\small
\begin{align*}
   \Tilde{\vcr{h}}_k^{(1,N)}{-} \vcr{h}^{(1,N)}_k &= \sum_{S\subset \{2,\ldots,N-1\},|S|\geq 2}{\tsr{\hat{T}}}  \bigtimes_{i\in \{2,\ldots,N-1\}} \vcr{v}_k^{(i)T} + 
\sum_{i\in\{2,\ldots,N-1\}}\left(\tsr{\hat{P}} - \tsr{\hat{T}}\right) \times_i d\vcr{a}_k^{(i)T} \bigtimes_{j\in \{2,\ldots,N-1\}, j\neq i} \vcr{a}_k^{(j)T}
,
% \\  
% &\text{ where } \vcr{v}_k^{(i)}=\begin{cases} -d\vcr{a}_k^{(i)} &{:} i \in S \\ \vcr{a}_k^{(i)} &{:} i \notin S \end{cases}. 
\end{align*}
}
where $\vcr{v}_k^{(i)}=-d\vcr{a}_k^{(i)}$ if $i \in S$ and $\vcr{v}_k^{(i)}=\vcr{a}_k^{(i)}$ otherwise. By the same analysis as in Theorem~\ref{thm:cpgenerrbound}, the error due to each term with $|S|=d$, $d\geq 2$ can be bounded as $O(N\epsilon)^d \tnrm{{\tsr{\hat{T}}}}\prod_{j=2}^{N-1}\vnrm{\vcr{a}^{(j)}_k}.$ We can then upper-bound the error due to the second term by
\begin{equation}\label{eq:second_order_tern_cp_thm}
    O(N\epsilon) \tnrm{\tsr{\hat{P}} - \tsr{\hat{T}}}\prod_{j=2}^{N-1}\vnrm{\vcr{a}^{(j)}_k},
\end{equation}
thus completing the proof.
\end{proof}
From Theorem~\ref{thm:cpgenerrbound_withsecond}, we can conclude that when the approximate CP decomposition is close to $\tsr{X}$, the term expressed in Equation \ref{eq:second_order_tern_cp_thm} will have small magnitude, making the absolute error second order accurate in terms of $\epsilon$. 

In Appendix~\ref{subsec:cp_bound_cond}, we also obtain relative error bounds on MTTKRPs (the right-hand sides in the linear least squares subproblems). 
However, this error bound is relative to the condition number of $\tsr{X}$ (defined in Appendix~\ref{subsec:cond}), which is infinite for sufficiently large tensors.
% \TODO{
% % This means that instead of incurring an error that is $\epsilon^2$ times the norms of the tensor and factors (caused by ignoring the term altogether), the error is $\epsilon^2$ times the norm of the residual (caused by computing the term approximately), which should be smaller. In Section 4.1, the authors explain they leave out this improvement in the analysis because it tightens the bound only by a constant factor. This does not make sense to me, and from the experimental results, the improvement does seem to matter in practice. I think this means the analysis is too loose. 
% I think to tie the analysis closer to the algorithm, which is running ALS with approximate MTTKRP results (M's), and be able to reason about experimental results, it would help to state a bound on the error in the M's (not $\delta$ M's) rather than a bound on the H's.}

\subsection{Tucker-ALS}
\label{sec:error-tucker}
For Tucker decomposition, the pairwise perturbation approximation satisfies better bounds than for CP decomposition, due to the orthogonality of the factor matrices. 
% \modify{
% For the analysis simplicity, we present an error analysis for the case where the second order correction terms $\mat{V}^{(n,i,j)}$ are not included in PP. Analysis similar to Theorem~\ref{thm:cpgenerrbound_withsecond} can be performed to derived better bounds when we have low Tucker decomposition residual.
% }
We can not only obtain the similar bound as Theorem~\ref{thm:cpgenerrbound}, but also obtain stronger results assuming that either the residual of the Tucker decomposition is bounded (it suffices that the decomposition achieves one digit of accuracy in residual) or that the ratio of rank to dimension is not too large.
We demonstrate that
\begin{itemize}[leftmargin=*]
\item {similar to Algorithm~\ref{alg:cp_als2} and Theorem~\ref{thm:cpgenerrbound}, when we view the ALS procedure for Tucker decomposition of equidimensional tensors in terms of pairwise updates, we can bound the error of updates to any right-hand side $\tsr{Y}^{(m)}$ due to changes in any one of the other factor matrices $\delta\mat{A}^{(n)}$. We define the update $\tsr{J}^{(m,n)}$ as
\[
\tsr{J}^{(m,n)} =
 \tsr{Y}^{(m,n)} \times_n \delta\mat{A}^{(n){T}}, \quad \text{where} \quad \delta \mat{A}^{(n)} = \mat{A}_\text{new}^{(n)}-\mat{A}^{(n)}.
\]
The columnwise absolute error bound for MTTKRP holds for $\tsr{J}^{(m,n)}$ when the column-wise 2-norm relative perturbations of the input matrices are bounded by $O(\epsilon)$ (Theorem~\ref{thm:tuckergenerrbound}),}
\item{the relative error of $\tsr{Y}^{(m)}$ for $m\in \inti{1}{N}$ satisfies the bound of $O\left(\epsilon^2\right)$, so long as the residual of Tucker decomposition is small (Theorem~\ref{thm:tuckerbound4}), }
\item{the relative error of $\tsr{Y}^{(m)}$ for $m\in \inti{1}{N}$ is bounded in Frobenius norm  by $O\left(\epsilon^2\right)$ for a fixed problem size assuming that HOSVD is performed to initialize Tucker-ALS (Theorem~\ref{thm:tuckerboundf})}. 
\end{itemize}

\begin{thm}
\label{thm:tuckergenerrbound}
For an order $N$ tensor $\tsr{X}$ with dimension sizes $s$, if 
$\vnrm{d\vcr{a}^{(n)}_k}$
$/\vnrm{\vcr{a}^{(n)}_k}\leq\epsilon<1$ for all $n\in\{1,\ldots, N\}, k \in \inti{1}{R}$, the pairwise perturbation algorithm computes update $\tsr{J}^{(1,N)}$ with error, 
{
\begin{align*}
\Big \|\vcr{\Tilde{j}}_{i_2,\ldots,i_{N}}^{(1,N)}-\vcr{j}_{i_2,\ldots,i_{N}}^{(1,N)}\Big\|_2
=& O(N\epsilon)\tnrm{{\tsr{\hat{T}}}}\prod_{j=2}^{N-1}\vnrm{\vcr{a}^{(j)}_k},
\end{align*}
}
where 
$\tsr{\hat{T}}=\tsr{X} \times_N \delta\mat{a}_{i_N}^{(N){T}}$ and $\vcr{j}^{(1,N)}_{i_2,\ldots,i_{N}}(x) = \tsr{J}^{(1,N)}(x,i_2,\ldots,i_N)$. 
\end{thm}
\begin{proof}
The proof is similar to that of Theorem~\ref{thm:cpgenerrbound}. The ALS update and approximated update after a change $\delta \mat{A}^{(N)}$ are
\[\vcr{j}^{(1,N)}_{i_2,\ldots,i_{N}} = {\tsr{\hat{T}}} \bigtimes_{j=2}^{N-1} \vcr{a}_{i_j}^{(j){T}} \quad \text{and} \quad 
  \vcr{\Tilde{j}}^{(1,N)}_{i_2,\ldots,i_{N}} = {\tsr{\hat{T}}} \bigtimes_{j=2}^{N-1} \left(\vcr{a}_{i_j}^{(j){T}} - d\vcr{a}_{i_j}^{(j){T}}\right).\]
The error bound proceeds by analogy to the proof of Theorem~\ref{thm:cpgenerrbound}.
\end{proof}

Using Lemma~\ref{lem:norm}, we prove in Lemma~\ref{lem:normequal} that after contracting a tensor with a matrix with orthonormal columns, whose row length is higher or equal to the column length, the contracted tensor norm is the same as the original tensor norm.
\begin{lem}
\label{lem:normequal}
Given tensor $\tsr{G}\in\mathbb{R}^{r_1\times\cdots\times r_N}$, the mode-$n$ product for any $n\in\{1,\ldots , N\}$, with a matrix with orthonormal columns $\mat{M} \in \mathbb{R}^{s\times r_n}$, $r_n\leq s$, satisfies $\tnrm{\tsr{G}}= \tnrm{\tsr{G}\times_n\mat{M}}$.
\end{lem}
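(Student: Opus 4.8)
The plan is to derive the identity from the two opposite inequalities $\tnrm{\tsr{G}\times_n\mat{M}}\leq\tnrm{\tsr{G}}$ and $\tnrm{\tsr{G}}\leq\tnrm{\tsr{G}\times_n\mat{M}}$, each obtained as an immediate consequence of the submultiplicativity bound of Lemma~\ref{lem:norm}. Since the spectral norm is invariant under reordering of the modes of $\tsr{G}$, it suffices to treat the case $n=N$; then $\tsr{G}\times_N\mat{M}=\tsr{G}\mat{M}^T$ in the juxtaposition notation, and Lemma~\ref{lem:norm} applies directly with $\mat{M}^T$ playing the role of the contracting matrix. I would also record at the outset that a matrix $\mat{M}\in\mathbb{R}^{s\times r_N}$ with orthonormal columns (which is possible precisely because $r_N\leq s$, the stated hypothesis) has $\tnrm{\mat{M}}=\tnrm{\mat{M}^T}=1$.

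For the first inequality I would apply Lemma~\ref{lem:norm} to $\tsr{V}=\tsr{G}\mat{M}^T$, giving $\tnrm{\tsr{G}\times_N\mat{M}}=\tnrm{\tsr{G}\mat{M}^T}\leq\tnrm{\tsr{G}}\tnrm{\mat{M}^T}=\tnrm{\tsr{G}}$.

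For the reverse inequality the key observation is that orthonormality of the columns of $\mat{M}$ means $\mat{M}^T\mat{M}=\mat{I}_{r_N}$, so that $(\tsr{G}\times_N\mat{M})\times_N\mat{M}^T=\tsr{G}\times_N(\mat{M}^T\mat{M})=\tsr{G}$. Rewriting the left-hand side in juxtaposition notation and invoking Lemma~\ref{lem:norm} once more, this time with $\mat{M}$ itself as the contracting matrix (again of unit spectral norm), yields $\tnrm{\tsr{G}}=\tnrm{(\tsr{G}\times_N\mat{M})\times_N\mat{M}^T}\leq\tnrm{\tsr{G}\times_N\mat{M}}\tnrm{\mat{M}}=\tnrm{\tsr{G}\times_N\mat{M}}$. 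Combining the two inequalities gives the claimed equality.

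There are no hard estimates here; the only thing that requires care is the bookkeeping in reducing to mode $N$ and keeping straight which of $\mat{M}$ or $\mat{M}^T$ is the contracting matrix in each application of Lemma~\ref{lem:norm}, together with noting that the hypothesis $r_N\leq s$ is exactly what guarantees that $\mat{M}$ can have orthonormal columns and hence that $\mat{M}^T\mat{M}=\mat{I}$.
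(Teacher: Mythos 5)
Your proof is correct and follows essentially the same route as the paper's: both directions come from the submultiplicativity of Lemma~\ref{lem:norm} together with $\mat{M}^T\mat{M}=\mat{I}_{r_n}$ and $\tnrm{\mat{M}}=\tnrm{\mat{M}^T}=1$. The only difference is that you make explicit the reduction to mode $N$ via reordering invariance and the bookkeeping of which of $\mat{M}$, $\mat{M}^T$ is the contracting matrix, which the paper leaves implicit.
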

\begin{proof}
Based on the submultiplicative property of the tensor norm (Lemma~\ref{lem:norm}),
\begin{align*}
&\tnrm{\tsr{G}} = \tnrm{\tsr{G}\times_n(\mat{M}^T\mat{M})} = \tnrm{\tsr{G}\times_n\mat{M}\times_n\mat{M}^T} \leq \tnrm{\tsr{G}\times_n\mat{M}}\tnrm{\mat{M}^T} {=} \tnrm{\tsr{G}\times_n\mat{M}}, 
\end{align*}
and simultaneously,
$
\tnrm{\tsr{G}\times_n\mat{M}} \leq \tnrm{\tsr{G}}\tnrm{\mat{M}} {=} \tnrm{\tsr{G}}.$
%Therefore, $\tnrm{\tsr{G}}= \tnrm{\tsr{G}\times_n\mat{M}}$, and the statement is proved.
\end{proof}

Using Lemma~\ref{lem:normequal}, we prove in Theorem~\ref{thm:tuckerbound4} that when the relative error of  the matrices $\mat{A}^{(n)}$ for $n\in\inti{1}{N}$ is small and the residual of the Tucker decomposition is loosely bounded, the relative error bound for the $\tsr{Y}^{(n)}$ is independent of the tensor condition number defined in Section~\ref{sec:cond_glb}.
\begin{thm} 
\label{thm:tuckerbound4}
Given tensor $\tsr{X}\in\mathbb{R}^{s_1\times\cdots\times s_N}$, if $\tnrm{d\mat{A}^{(n)}}\leq\epsilon\ll 1 \text{ for } n\in\inti{1}{N}$ and \\ 
    $\tnrm{\tsr{X} - \cpbrak{\tsr{G};\mat{A}^{(1)},\mat{A}^{(2)},\ldots,\mat{A}^{(N)}}}\leq \frac{1}{3}\tnrm{\tsr{X}}$, $\Tilde{\tsr{Y}}^{(n)}$ is constructed with error,
\[
\frac{\tnrm{\Tilde{\tsr{Y}}^{(n)}-\tsr{Y}^{(n)}}}{\tnrm{\tsr{Y}^{(n)}}} = O\left(\epsilon^2\right).
\]
\end{thm}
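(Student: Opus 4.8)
The plan is to expand $\Tilde{\tsr{Y}}^{(n)} - \tsr{Y}^{(n)}$ as a sum over subsets $S \subseteq \inti{1}{N}\setminus\{n\}$ with $|S| \geq 2$, exactly as in the proof of Theorem~\ref{thm:cpgenerrbound}, where the term indexed by $S$ is $\tsr{X}$ contracted with $-d\mat{A}^{(i)}{}^T$ in each mode $i \in S$ and with $\mat{A}^{(i)}{}^T$ in each mode $i \notin S \cup \{n\}$. Each such term has spectral norm at most $\epsilon^{|S|}\tnrm{\tsr{X}}$ by repeated application of Lemma~\ref{lem:norm} together with $\tnrm{\mat{A}^{(i)}} = 1$ (the factor matrices are orthonormal) and $\tnrm{d\mat{A}^{(i)}} \leq \epsilon$; since there are fewer than $2^{N-1}$ such subsets and the smallest has $|S| = 2$, I would conclude $\tnrm{\Tilde{\tsr{Y}}^{(n)} - \tsr{Y}^{(n)}} = O(\epsilon^2)\tnrm{\tsr{X}}$, with the constant depending only on $N$.

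The second ingredient is a lower bound $\tnrm{\tsr{Y}^{(n)}} = \Omega(\tnrm{\tsr{X}})$, and this is where the residual hypothesis enters. Writing $\tsr{R} = \tsr{X} - [\![\tsr{G};\mat{A}^{(1)},\ldots,\mat{A}^{(N)}]\!]$, I would observe that the Tucker approximation itself can be written as $[\![\tsr{G};\mat{A}^{(1)},\ldots,\mat{A}^{(N)}]\!] = \tsr{Y}^{(n)} \times_n \mat{A}^{(n)}$, hence $\tnrm{[\![\tsr{G};\mat{A}^{(1)},\ldots]\!]} \leq \tnrm{\tsr{Y}^{(n)}}\tnrm{\mat{A}^{(n)}} = \tnrm{\tsr{Y}^{(n)}}$ by Lemma~\ref{lem:norm} again. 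By the reverse triangle inequality, $\tnrm{[\![\tsr{G};\mat{A}^{(1)},\ldots]\!]} \geq \tnrm{\tsr{X}} - \tnrm{\tsr{R}} \geq \tfrac{2}{3}\tnrm{\tsr{X}}$, so $\tnrm{\tsr{Y}^{(n)}} \geq \tfrac{2}{3}\tnrm{\tsr{X}}$. Combining the two bounds gives
\[
\frac{\tnrm{\Tilde{\tsr{Y}}^{(n)}-\tsr{Y}^{(n)}}}{\tnrm{\tsr{Y}^{(n)}}} \leq \frac{O(\epsilon^2)\tnrm{\tsr{X}}}{\tfrac{2}{3}\tnrm{\tsr{X}}} = O(\epsilon^2).
\]

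A subtlety I would need to handle carefully: the $\tsr{X} \times_n$-style contractions in Lemma~\ref{lem:norm} are stated for the last mode with a matrix acting on the right, whereas here the contractions act on various modes with transposes; since the spectral norm is invariant under reordering of modes (stated in the excerpt), this is harmless, but I would say a word about it. I also need that when $|S|$ modes are contracted with $d\mat{A}^{(i)}{}^T$ and the rest with $\mat{A}^{(i)}{}^T$, only mode $n$ remains fully uncontracted, so the result is genuinely a tensor whose spectral norm makes sense — this matches the structure of $\tsr{Y}^{(n)}$. The main obstacle is really just bookkeeping in the subset expansion and making sure the orthonormality $\tnrm{\mat{A}^{(i)}} = 1$ is invoked correctly in each factor of each term; the lower bound on $\tnrm{\tsr{Y}^{(n)}}$ via the residual assumption is the one genuinely new idea and is short.
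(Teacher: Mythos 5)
Your proposal is correct and follows essentially the same strategy as the paper: bound the numerator by $O(\epsilon^2)\tnrm{\tsr{X}}$ using submultiplicativity (Lemma~\ref{lem:norm}) and orthonormality of the factor matrices, and use the residual hypothesis to lower-bound $\tnrm{\tsr{Y}^{(n)}}$ by a constant multiple of $\tnrm{\tsr{X}}$. One step needs repair: the identity $[\![\tsr{G};\mat{A}^{(1)},\ldots,\mat{A}^{(N)}]\!]=\tsr{Y}^{(n)}\times_n\mat{A}^{(n)}$ is not correct as written (it does not even typecheck: mode $n$ of $\tsr{Y}^{(n)}$ has dimension $s_n$, while its other modes have dimension $R_i$ and must be re-expanded by the $\mat{A}^{(i)}$). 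The correct relation is $[\![\tsr{G};\mat{A}^{(1)},\ldots,\mat{A}^{(N)}]\!]=\bigl(\tsr{Y}^{(n)}\times_n(\mat{A}^{(n)}\mat{A}^{(n)T})\bigr)\bigtimes_{i\neq n}\mat{A}^{(i)}$, or more directly $\tnrm{[\![\tsr{G};\mat{A}^{(1)},\ldots,\mat{A}^{(N)}]\!]}=\tnrm{\tsr{G}}=\tnrm{\tsr{Y}^{(n)}\times_n\mat{A}^{(n)T}}\leq\tnrm{\tsr{Y}^{(n)}}$ by Lemmas~\ref{lem:norm} and~\ref{lem:normequal}; after this repair your conclusion $\tnrm{\tsr{Y}^{(n)}}\geq\tfrac{2}{3}\tnrm{\tsr{X}}$ goes through and is in fact slightly tighter than the paper's bound of $\tfrac{1}{3}\tnrm{\tsr{X}}$, which the paper obtains by writing $\tsr{Y}^{(n)}=\tsr{G}\times_n\mat{A}^{(n)}+\tsr{R}\bigtimes_{i\neq n}\mat{A}^{(i)T}$ and applying the triangle inequality. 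Likewise your direct bound $\tnrm{\tsr{Y}_p^{(i,j,n)}}\leq\tnrm{\tsr{X}}$ for the numerator terms avoids the paper's detour through the residual, which yields the looser constant $\tfrac{5}{3}\tnrm{\tsr{X}}$. A last cosmetic point: in the subset expansion, the modes outside $S\cup\{n\}$ should carry the past factor matrices $\mat{A}_p^{(i)}=\mat{A}^{(i)}-d\mat{A}^{(i)}$ rather than the current ones; since both have orthonormal columns this does not change the bound.
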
 
\begin{proof}
\begin{align*}
\frac{\tnrm{\Tilde{\tsr{Y}}^{(n)}-\tsr{Y}^{(n)}}}{\tnrm{\tsr{Y}^{(n)}}} 
&{\leq} {N \choose 2}\max_{i,j}\frac{\tnrm{\tsr{Y}_p^{(i,j,n)}\times_id\mat{A}^{(i)T}\times_jd\mat{A}^{(j)T}}}{\tnrm{\tsr{Y}^{(n)}}} \leq {N \choose 2}\max_{i,j}\frac{ \tnrm{\tsr{Y}_p^{(i,j,n)}} \tnrm{d\mat{A}^{(i)}} \tnrm{d\mat{A}^{(j)}}}{\tnrm{\tsr{Y}^{(n)}}}  .
\end{align*}
Let $\Tilde{\tsr{X}}=\cpbrak{\tsr{G};\mat{A}^{(1)},\mat{A}^{(2)},\ldots,\mat{A}^{(N)}}, \tsr{R} = \tsr{X}-\Tilde{\tsr{X}}$. 
Define the tensors $\tsr{Z}^{(i,j,n)}$ by contraction of $\tsr{R}$ with all except three factor matrices,
 \begin{align*}
\tsr{Z}^{(i,j,n)}
    =\tsr{R}\bigtimes_{r\in \inti{1}{N}\setminus\{i,j,n\}} \mat{A}^{(r)}{}^T.
 \end{align*}
For $ \tnrm{\tsr{X} - \Tilde{\tsr{X}}}=\tnrm{\tsr{R}}\leq \frac{1}{3}\tnrm{\tsr{X}}, $ 
we have 
$
\frac{2}{3}\tnrm{\tsr{X}} %\leq\tnrm{\tsr{X}}-\tnrm{\tsr{R}}
\leq\tnrm{\Tilde{\tsr{X}}}
\leq \frac{4}{3}\tnrm{\tsr{X}} .
$ 
Based on Lemma~\ref{lem:normequal},
\begin{align*}
\tnrm{\tsr{Y}^{(n)}} &= \tnrm{\tsr{G}\times_n\mat{A}^{(n)}+\tsr{Z}^{(i,j,n)}\times_i\mat{A}^{(i)T}\times_j\mat{A}^{(j)T}} 
\geq \tnrm{\tsr{G}}-\tnrm{\tsr{Z}^{(i,j,n)}}\tnrm{\mat{A}^{(i)T}}\tnrm{\mat{A}^{(j)T}}
\\&\geq \tnrm{\tsr{G}}-\tnrm{\tsr{R}} \geq \frac{1}{3}\tnrm{\tsr{X}}.
\end{align*}
Additionally,
\begin{align*}
\tnrm{\tsr{Y}^{(i,j,n)}} = \tnrm{\tsr{G}\times_i\mat{A}^{(i)}\times_j\mat{A}^{(j)}\times_n\mat{A}^{(n)}+\tsr{Z}^{(i,j,n)}} \leq \tnrm{\tsr{G}}+\tnrm{\tsr{R}}\leq \frac{5}{3}\tnrm{\tsr{X}}.
\end{align*}
Therefore, 
\begin{align*}
\frac{\tnrm{\Tilde{\tsr{Y}}^{(n)}-\tsr{Y}^{(n)}}}{\tnrm{\tsr{Y}^{(n)}}} {\leq} {N \choose 2}\max_{i,j}\frac{ \tnrm{\tsr{Y}^{(i,j,n)}_p} \tnrm{d\mat{A}^{(i)}} \tnrm{d\mat{A}^{(j)}}}{\tnrm{\tsr{Y}^{(n)}}} 
{\leq}   {N \choose 2}\frac{\frac{5}{3}\tnrm{\tsr{X}}\epsilon^2}{\frac{1}{3}\tnrm{\tsr{X}}} {=} O\left(\epsilon^2\right).
\end{align*}
\end{proof}

We now derive a Frobenius norm error bound that is independent of residual norm and tensor condition number, 
and is based the ratio of the tensor dimensions and the Tucker rank.
We arrive at this result (Theorem~\ref{thm:tuckerboundf}) by obtaining a lower bound on the residual achieved by the HOSVD (Lemmas~\ref{lem:core1d} and \ref{lem:lowerbound}).
\begin{lem}
\label{lem:core1d}
Given tensor $\tsr{X}\in\mathbb{R}^{s_1\times\cdots\times s_N}$ and matrix $\mat{A}\in\mathbb{R}^{R\times s_n}$, where 
$R<\max\left\{s_n, \prod_{i=1, i\neq n}^N s_i\right\}$
and $\mat{A}$ consists of $R$ leading left singular vectors of $\mat{X}_{(n)}$. Let ${\tsr{Z}} = \tsr{X}\times_n \mat{A}$, $\fnrm{\tsr{X}} \geq {\fnrm{\tsr{Z}}} \geq \sqrt{\frac{R}{s_n}}\fnrm{\tsr{X}}$.
\end{lem}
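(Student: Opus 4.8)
The plan is to reduce the statement to an elementary fact about the singular values of the mode-$n$ unfolding $\mat{X}_{(n)}$. First I would use that the Frobenius norm is invariant under matricization, so $\fnrm{\tsr{X}}=\fnrm{\mat{X}_{(n)}}$ and $\fnrm{\tsr{G}}=\fnrm{\mat{G}_{(n)}}$, and that by definition of the mode-$n$ product $\mat{G}_{(n)}=\mat{A}\mat{X}_{(n)}$.

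Next, writing a full SVD $\mat{X}_{(n)}=\mat{U}\mat{\Sigma}\mat{V}^T$ with $\sigma_1\geq\sigma_2\geq\cdots\geq 0$ on the diagonal of $\mat{\Sigma}$, the hypothesis that $\mat{A}$ consists (as its rows) of the $R$ leading left singular vectors of $\mat{X}_{(n)}$ gives $\mat{A}\mat{U}=\begin{bmatrix}\mat{I}_R & \mat{0}\end{bmatrix}$, hence $\mat{G}_{(n)}=\mat{A}\mat{U}\mat{\Sigma}\mat{V}^T$ equals $\mathrm{diag}(\sigma_1,\ldots,\sigma_R)$ times a matrix with orthonormal rows. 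Therefore $\fnrm{\tsr{G}}^2=\sum_{i=1}^R\sigma_i^2$, while $\fnrm{\tsr{X}}^2=\sum_{i=1}^{s_n}\sigma_i^2$, where the sum runs to $s_n$ because $\mat{X}_{(n)}$ has $s_n$ rows (pad the list with zero singular values if its rank $\min(s_n,K)$ is smaller). The upper bound $\fnrm{\tsr{G}}\leq\fnrm{\tsr{X}}$ then follows immediately by dropping the nonnegative tail $\sum_{i=R+1}^{s_n}\sigma_i^2$.

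For the lower bound I would invoke the averaging inequality for a nonincreasing nonnegative sequence: for $a_1\geq\cdots\geq a_{s_n}\geq 0$ and $R<s_n$, each of the first $R$ terms is $\geq a_R$ and each of the last $s_n-R$ terms is $\leq a_R$, so $(s_n-R)\sum_{i=1}^R a_i\geq R(s_n-R)a_R\geq R\sum_{i=R+1}^{s_n}a_i$, which rearranges to $s_n\sum_{i=1}^R a_i\geq R\sum_{i=1}^{s_n}a_i$. Applying this with $a_i=\sigma_i^2$ gives $\fnrm{\tsr{G}}^2\geq\tfrac{R}{s_n}\fnrm{\tsr{X}}^2$; taking square roots finishes the proof.

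There is essentially no serious obstacle here. The only points requiring mild care are the bookkeeping of how many singular values $\mat{X}_{(n)}$ actually has (at most $\min(s_n,K)$) together with the harmless convention of extending the singular value list with zeros up to length $s_n$ so the averaging inequality applies verbatim, and checking $s_n-R>0$, which is exactly the standing hypothesis $R<s_n$. I would also remark that the lower bound is attained precisely when all $s_n$ singular values of $\mat{X}_{(n)}$ are equal, so the factor $\sqrt{R/s_n}$ cannot be improved in general.
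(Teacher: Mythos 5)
Your proof is correct and follows essentially the same route as the paper's: both reduce to the fact that $\mat{G}_{(n)}=\mat{A}\mat{X}_{(n)}$ has as its singular values the $R$ leading singular values of $\mat{X}_{(n)}$, and then apply the averaging inequality for the decreasing sequence $\sigma_1^2\geq\cdots\geq\sigma_{s_n}^2$ to the sum-of-squares characterization of the Frobenius norm. You simply spell out the averaging step and the tightness remark that the paper leaves implicit.
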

\begin{proof}
The singular values of $\mat{A}\mat{X}_{(n)}$ are the first $R$ singular values of $\mat{X}_{(n)}$. 
Since the square of the Frobenius norm of a matrix is the sum of the squares of the singular values, $\fnrm{{\tsr{Z}}}^2 =\fnrm{\mat{A}\mat{X}_{(n)}}^2\geq \left(R/s_n\right)\fnrm{\mat{X}_{(n)}}^2= \left(R/s_n\right)\fnrm{\tsr{X}}^2$ and $\fnrm{{\tsr{Z}}}\leq \fnrm{\tsr{X}}$.
\end{proof}

\begin{lem}
\label{lem:lowerbound}
For any equidimensional order $N$ tensor $\tsr{X}$ with size $s$, $\fnrm{\tsr{Y}^{(n)}} \geq\left(\frac{R}{s}\right)^{N/2}\fnrm{\tsr{X}}$ if Tucker-ALS starts from an interlaced HOSVD. 
\end{lem}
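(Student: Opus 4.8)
The plan is to first show that the core tensor produced by the interlaced HOSVD already has Frobenius norm at least $(R/s)^{N/2}\fnrm{\tsr{X}}$, and then to argue that this lower bound survives every subsequent ALS update, because (i) the norm of the current core never decreases across ALS updates and (ii) $\fnrm{\tsr{Y}^{(n)}}$ always dominates the norm of the core obtained from it.

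For the first part, recall that the interlaced HOSVD sets $\tsr{Z}^{(1)}=\tsr{X}$ and, for $n=1,\ldots,N$, takes $\mat{A}^{(n)}$ to be the $R$ leading left singular vectors of $\mat{Z}^{(n)}_{(n)}$ and $\tsr{Z}^{(n+1)}=\tsr{Z}^{(n)}\times_n\mat{A}^{(n)T}$, so that the initial core is $\tsr{G}_0=\tsr{Z}^{(N+1)}$. Each step is exactly the setting of Lemma~\ref{lem:core1d} (with $\tsr{X}\to\tsr{Z}^{(n)}$, $\mat{A}\to\mat{A}^{(n)T}$, $\tsr{G}\to\tsr{Z}^{(n+1)}$), hence $\fnrm{\tsr{Z}^{(n+1)}}\geq\sqrt{R/s}\,\fnrm{\tsr{Z}^{(n)}}$. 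Telescoping over $n=1,\ldots,N$ gives $\fnrm{\tsr{G}_0}\geq(R/s)^{N/2}\fnrm{\tsr{X}}$. This is exactly where the ``interlaced'' variant is needed: for the classical HOSVD each $\mat{A}^{(n)}$ is computed from $\mat{X}_{(n)}$ rather than from the partially contracted $\mat{Z}^{(n)}_{(n)}$, so Lemma~\ref{lem:core1d} no longer chains.

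For the second part, consider any ALS update of mode $n$: it forms $\tsr{Y}^{(n)}$ and replaces $\mat{A}^{(n)}$ by the $R$ leading left singular vectors of $\mat{Y}^{(n)}_{(n)}$, producing the new core $\tsr{G}_{\mathrm{new}}=\tsr{Y}^{(n)}\times_n\mat{A}_{\mathrm{new}}^{(n)T}$ with $\fnrm{\tsr{G}_{\mathrm{new}}}^2=\sum_{i=1}^{R}\sigma_i(\mat{Y}^{(n)}_{(n)})^2$. Two elementary facts then finish the argument: (a) since Frobenius norm is invariant under matricization, $\fnrm{\tsr{Y}^{(n)}}^2=\fnrm{\mat{Y}^{(n)}_{(n)}}^2=\sum_i\sigma_i(\mat{Y}^{(n)}_{(n)})^2\geq\fnrm{\tsr{G}_{\mathrm{new}}}^2$; and (b) for the old core $\tsr{G}_{\mathrm{old}}=\tsr{Y}^{(n)}\times_n\mat{A}_{\mathrm{old}}^{(n)T}$ one has $\fnrm{\tsr{G}_{\mathrm{old}}}\leq\fnrm{\tsr{G}_{\mathrm{new}}}$, because $\mat{A}_{\mathrm{old}}^{(n)}\mat{A}_{\mathrm{old}}^{(n)T}$ is a rank-$R$ orthogonal projector and the top-$R$ left singular subspace maximizes captured Frobenius energy (Eckart--Young), so the core norm is non-decreasing along the entire run. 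Combining, every $\tsr{Y}^{(n)}$ formed after HOSVD initialization satisfies $\fnrm{\tsr{Y}^{(n)}}\geq\fnrm{\tsr{G}_{\mathrm{new}}}\geq\fnrm{\tsr{G}_0}\geq(R/s)^{N/2}\fnrm{\tsr{X}}$.

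The one delicate point is bookkeeping about what ``$\tsr{Y}^{(n)}$'' means during a sweep: within a sweep it is built from a mixture of already-updated and not-yet-updated factor matrices, so the monotonicity in (b) must be stated per individual factor update rather than per sweep; it is, and that suffices. Everything else is a direct telescoping of Lemma~\ref{lem:core1d} plus the two standard facts about Frobenius norms and truncated SVD, so I do not expect a genuine obstacle.
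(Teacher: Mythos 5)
Your proof is correct and follows essentially the same route as the paper: telescope Lemma~\ref{lem:core1d} over the interlaced HOSVD to bound the initial core norm, then combine the non-decrease of the core norm under ALS updates with $\fnrm{\tsr{Y}^{(n)}}\geq\fnrm{\tsr{G}}$. The only difference is that you explicitly justify the monotonicity of $\fnrm{\tsr{G}}$ (via the Eckart--Young/projector argument, stated per individual factor update), which the paper simply asserts.
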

\begin{proof}
In Tucker-ALS, $\fnrm{\tsr{G}}$ is strictly increasing after each Tucker iteration, where $\tsr{G}$ is $\tsr{X}$'s HOSVD core tensor. 
Since the interlaced SVD computes each $\mat{A}^{(n)}$ from the truncated SVD of the product of $\tsr{X}$ and the first $n-1$ factor matrices, we can apply Lemma~\ref{lem:core1d} $N$ times,
\begin{align*}
\fnrm{\tsr{X}\times_1\textbf{A}^{(1)\T}\cdots\times_{N-1}\textbf{A}^{(N-1)\T}} \geq
&\fnrm{\tsr{G}}
\geq \sqrt{\frac Rs}\fnrm{\tsr{X}\times_1\textbf{A}^{(1)\T}\cdots\times_{N-1}\textbf{A}^{(N-1)\T}} , \\
&\quad \vdots \\
\fnrm{\tsr{X}} \geq &\fnrm{\tsr{G}} \geq (R/s)^{N/2}\fnrm{\tsr{X}}.
\end{align*}
\end{proof}

\begin{thm}
\label{thm:tuckerboundf}
Given any equidimensional order $N$ tensor $\tsr{X}$ with size $s$, if 
$\fnrm{d\mat{A}^{(n)}}\leq\epsilon \text{ for } n\in [1,N]$, $\Tilde{\tsr{Y}}^{(n)}$ is constructed with error,
\[
\frac{\fnrm{\Tilde{\tsr{Y}}^{(n)}-\tsr{Y}^{(n)}}}{\fnrm{\tsr{Y}^{(n)}}} = O\Big(\epsilon^2\Big(\frac{s}{R}\Big)^{N/2}\Big),
\]
assuming that HOSVD is used to initialize Tucker-ALS and the residual associated with factor matrices $\mat{A}^{(1)},\ldots,\mat{A}^{(n)}$ is no higher than that attained by HOSVD.
\end{thm} 
\begin{proof}
\begin{align*}
\frac{\fnrm{\Tilde{\tsr{Y}}^{(n)}-\tsr{Y}^{(n)}}}{\fnrm{\tsr{Y}^{(n)}}} 
{\leq} {N \choose 2}\max_{i,j}\frac{\fnrm{\tsr{Y}_p^{(i,j,n)}\times_id\mat{A}^{(i)T}\times_jd\mat{A}^{(j)T}}}{\fnrm{\tsr{Y}^{(n)}}} .
\end{align*}
From Lemma~\ref{lem:lowerbound}, we have   
\begin{align*}
 \frac{\fnrm{\tsr{Y}^{(i,j,n)}_p\times_id\mat{A}^{(i)T}\times_jd\mat{A}^{(j)T}}}{\fnrm{\tsr{Y}^{(n)}}} 
\leq
\frac{\fnrm{\tsr{X}}\fnrm{d\mat{A}^{(i)}}\fnrm{d\mat{A}^{(j)}}}{(\frac{R}{s})^{N/2}\fnrm{\tsr{X}}} .
\end{align*}
Consequently, we can bound the relative error by
\[
\frac{\fnrm{\Tilde{\tsr{Y}}^{(n)}-\tsr{Y}^{(n)}}}{\fnrm{\tsr{Y}^{(n)}}} 
\leq {N \choose 2}(s/R)^{N/2}\max_{i,j} \fnrm{d\mat{A}^{(i)}}\fnrm{d\mat{A}^{(j)}}
=O\Big(\epsilon^2\Big(\frac{s}{R}\Big)^{N/2}\Big).
\]

\end{proof}

%\section{Cost Analysis}
%\label{sec:cost}
%\input{cost}

\section{Experiments}
\label{sec:exp}
We evaluate the performance of the pairwise perturbation algorithms on both synthetic tensors and application datasets. The synthetic experiments enable us to test tensors with known factors and to measure how effectively the algorithm works across many problem instances. We also consider publicly available tensor datasets as well as tensors of interest for quantum chemistry calculations and demonstrate the effectiveness of our algorithms on practical problems. We focus on the experiments on CP decomposition, because for many cases in Tucker decomposition, performing HOSVD and running CP decomposition on a much smaller core tensor is sufficient for getting accurate results. 

We use the metrics \textit{relative residual} and \textit{fitness} to evaluate the convergence of the decomposition. Let $\Tilde{\tsr{X}}$ denote the tensor reconstructed by the factor matrices and the core tensor, the relative residual and fitness are defined as follows,
\begin{align*}
    r = \frac{\|\tsr{X} - \Tilde{\tsr{X}}\|_F}{\|\tsr{X}\|_F}, \quad\quad
    f = 1 - r.
\end{align*}

{
We compare the performance of our own implementations of regular ALS with dimension trees to the pairwise perturbation algorithm. Both algorithms are implemented in Python with NumPy for sequential calculation and with Cyclops Tensor Framework (v1.5.5)~\cite{solomonik2014massively}, which is a distributed-memory library for matrix/tensor contractions that uses MPI for interprocessor communication and OpenMP for threading.
We also make use of a wrapper Cyclops provides for ScaLAPACK~\cite{Dongarra:1997:SUG:265932} routines to solve symmetric positive definite linear systems of equations and compute the SVD\footnote{All of our code is available at \url{https://github.com/LinjianMa/tensor_decompositions}.}.

The experimental results are collected on the Stampede2 supercomputer located at the University of Texas at Austin.
We leverage the Knight's Landing (KNL) nodes exclusively, each of which consists of 68 cores, 96 GB of DDR RAM, and 16 GB of MCDRAM.
These nodes are connected via a 100 Gb/sec fat-tree Omni-Path interconnect.
For both NumPy and Cyclops implementations,
we use Intel compilers and the MKL library for threaded BLAS routines, including batched BLAS routines, which are efficient for Khatri-Rao products arising in MTTKRP in CP decomposition, and employ the HPTT library~\cite{springer2017hptt} for high-performance tensor transposition.
All storage and computation assumes the tensors are dense.
}

\subsection{Sequential Experimental Results}
\label{sec:numpyexp}

{ 
We collect the sequential results on one KNL node on Stampede2, leveraging 64 threads for MKL and HPTT routines.

We compare the per-sweep time of the ALS dimension tree to the pairwise perturbation initialization and approximated sweep in Figure~\ref{fig:numpybench}. 
Each initialization sweep constructs the PP operators and updates all the factor matrices, while an approximated sweep computes approximate updates to all the factor matrices using the PP operators constructed in the last initialization sweep.
We also provide the reference per-sweep time of the ALS implementation from MATLAB Tensor Toolbox~\cite{kolda2006matlab}. As can be seen, both ALS sweep times on top of NumPy and Cyclops are comparable to the Tensor Toolbox. 
For both decompositions and all the configurations, the time of an PP initialization sweep is 1.5-2.0X the time of a dimension tree based ALS sweep, while the approximated steps can have up to $6.3$X speed-up for an order three tensor and $33.0$X speed-up for an order six tensor for CP, and up to $10.6$X speed-up for an order 6 tensor for Tucker. In addition, larger speed-up can be achieved with the increase of dimension size $s$ and the tensor order $N$, which is consistent with Table~\ref{table:compare}.

\begin{figure}[!ht]
\centering
\subfloat[CPD, $N=3$, $ R=s$]{\label{fig:bencha}\includegraphics[width=0.38\textwidth, keepaspectratio]{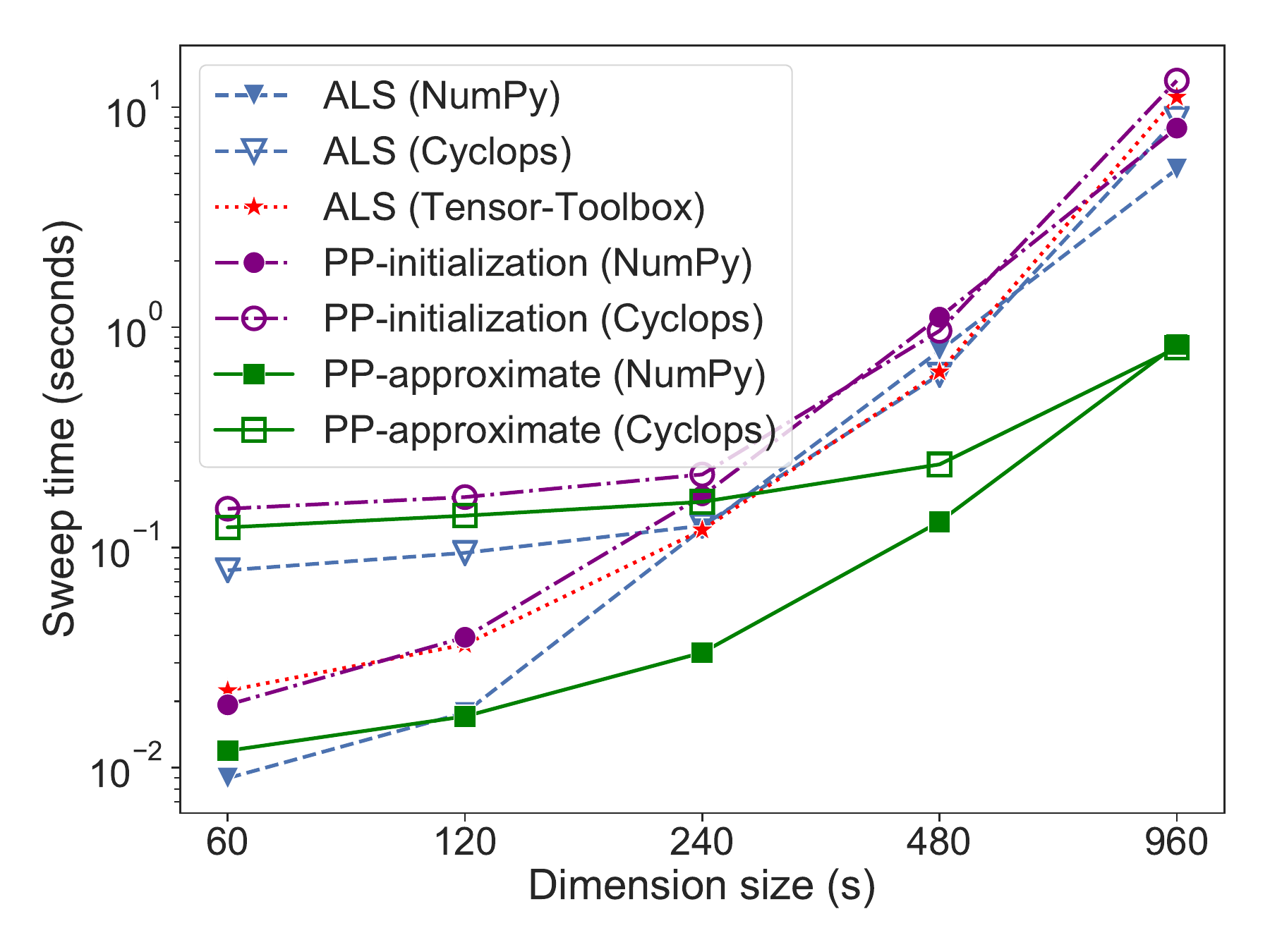}}
\subfloat[CPD, $s=960^{3/N}$, $ R=50$]{\label{fig:benchb}\includegraphics[width=0.38\textwidth, keepaspectratio]{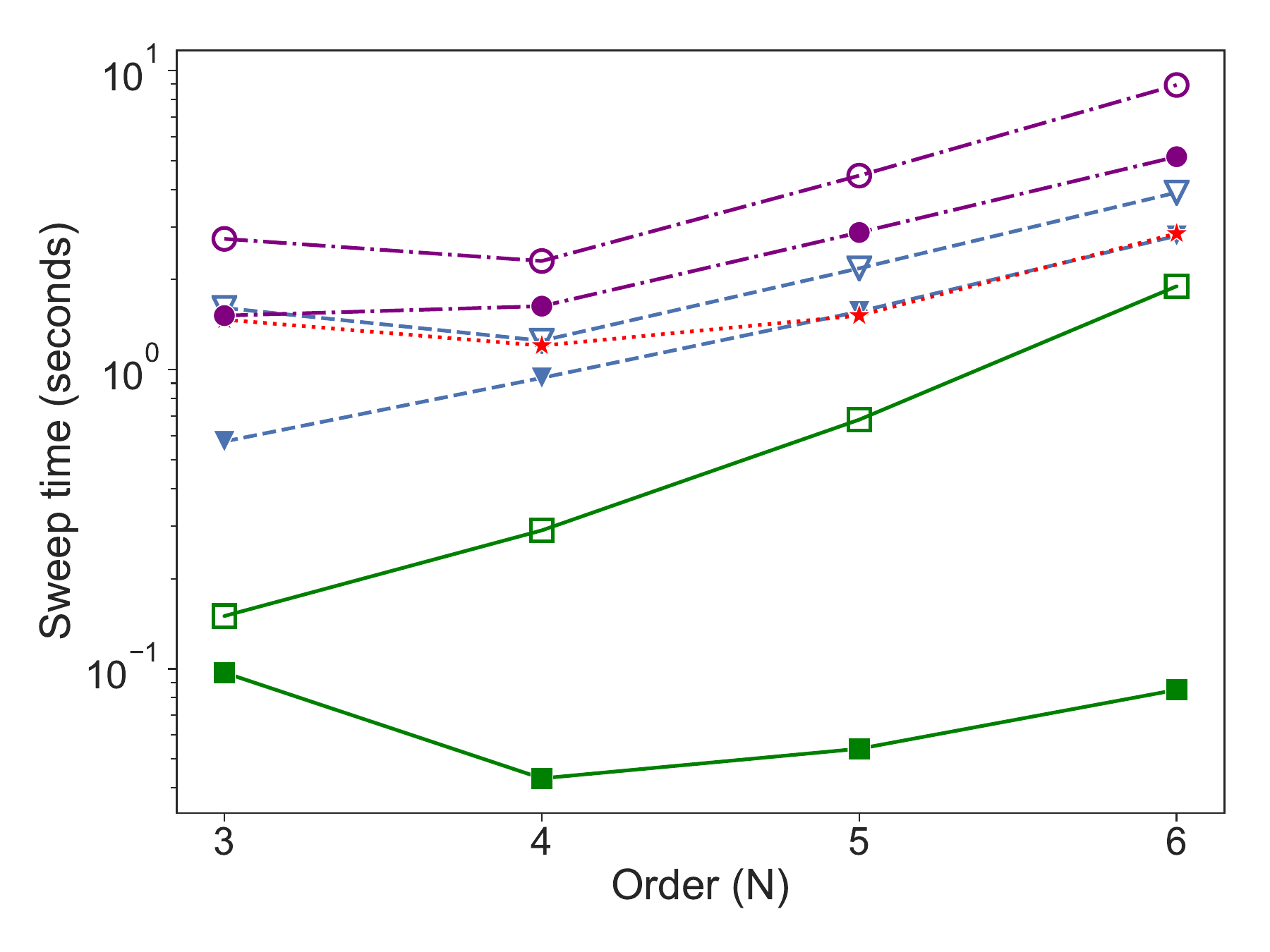}}

\subfloat[Tucker, $N=3$, $ R=0.05s$]{\label{fig:benchc}\includegraphics[width=0.38\textwidth, keepaspectratio]{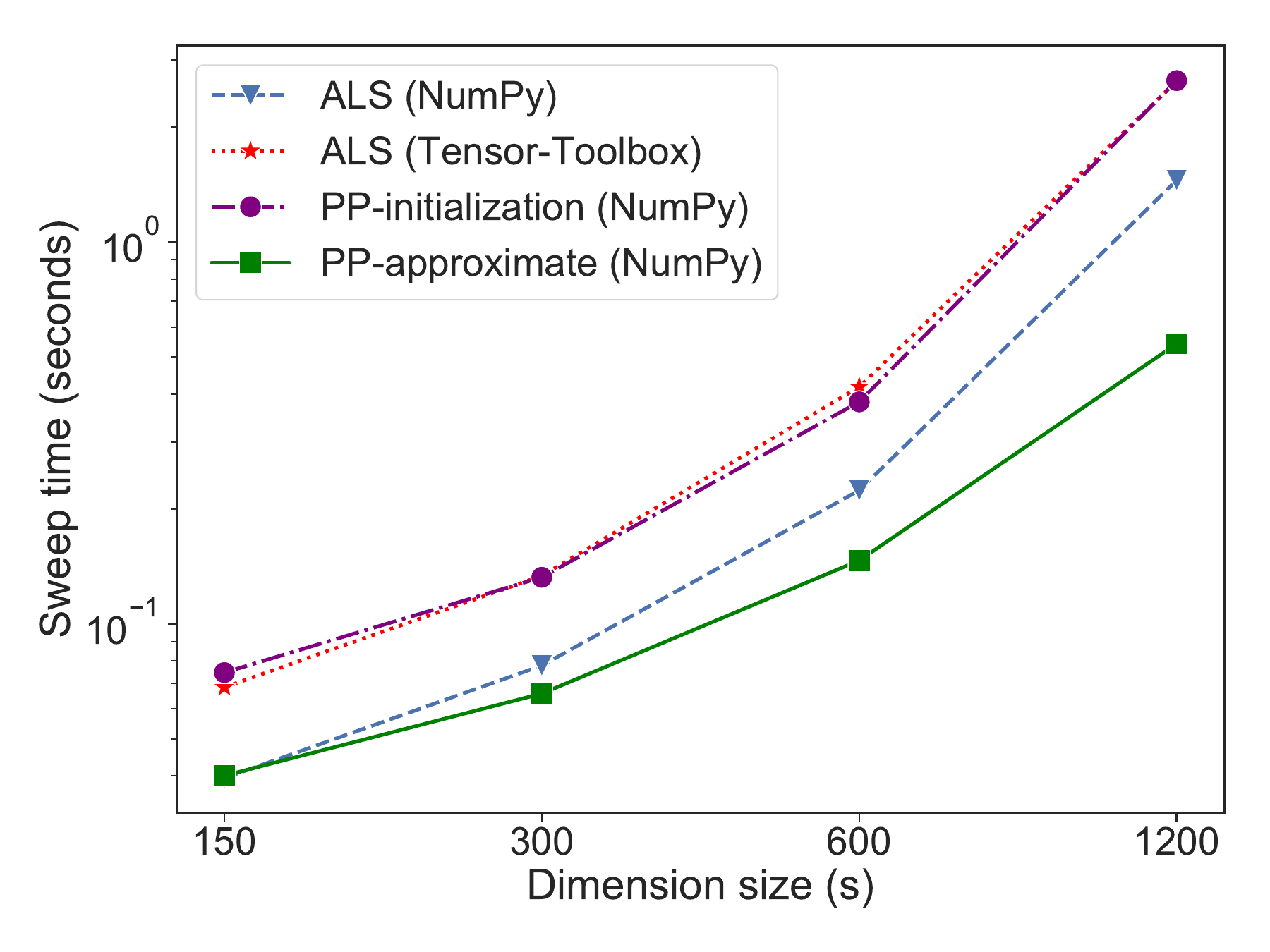}}
\subfloat[Tucker, $s=1200^{3/N}$, $ R=5$]{\label{fig:benchd}\includegraphics[width=0.38\textwidth, keepaspectratio]{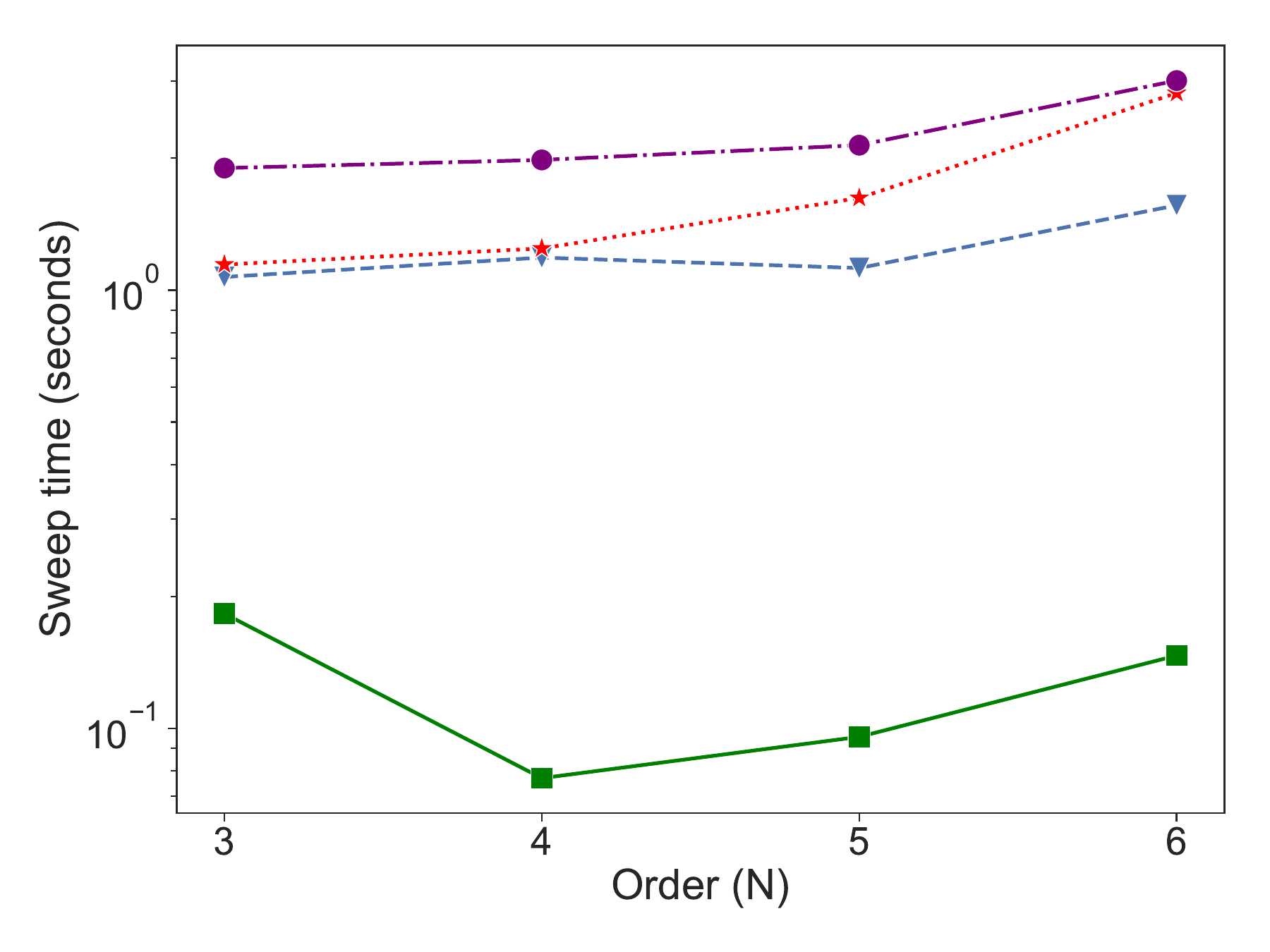}}
\caption{Sequential ALS sweep time comparison for both CP and Tucker decompositions. Results are taken as the mean time across 5 sweeps. The line label of (b) is the same as (a), of (d) is the same as (c). In \textbf{(a)(c)}, we vary the dimension size and the decomposition rank, and fix the input tensor order. In \textbf{(b)(d)}, we vary the input tensor order, and fix the input tensor size and the decomposition rank.
% \TODO{In Fig 2b, the performance of the Tensor Toolbox, which does not use dimension trees, converges with the performance of the NumPy code as the number of modes increases. If dimension trees are implemented correctly, the NumPy code should see growing speedups with N (should be about 3x faster for N=6). Because the PP optimization uses its own dimension trees, the comparison should be to a standard ALS should also using dimension trees, and that performance should be validated.}
}
\label{fig:numpybench}
\end{figure}

We use five different tensors to test the sequential performance of pairwise perturbation. Sequential performance results are collected using NumPy, as NumPy has better sequential performance than Cyclops, as shown in Figures~\ref{fig:bencha} and \ref{fig:benchb}. For all the experiments, the pairwise perturbation tolerance is set as 0.1 for CP decomposition, and set as 0.3 for Tucker decomposition.
}
\begin{enumerate}[leftmargin=*]
    \item \label{tsr:col} \textbf{Tensors with random collinearity}~\cite{battaglino2017practical}. We create tensors based on known randomly-generated  factor matrices $\mat{A}^{(n)}$. The factor matrices $\mat{A}^{(n)} \in \mathbb{R}^{s\times R}$ are randomly generated so that the columns have collinearity defined based on a scalar $C$ (selected randomly for the tensor from a given interval $[a,b)$), so that
$$
\frac{\left\langle \mat{a}_i^{(n)}, \mat{a}_j^{(n)}\right\rangle}{\vnrm{\mat{a}_i^{(n)}}\vnrm{\mat{a}_j^{(n)}}}
=C, \quad \forall i,j \in \{1, \ldots, R\}, i\neq j. 
$$
Higher collinearity corresponds to greater overlap between columns within each factor matrix, which makes the convergence of CP-ALS slower~\cite{rajih2008enhanced}.

\item \label{tsr:random}\textbf{Tensors made by random matrices}. We create tensors based on known uniformly distributed randomly generated factor matrices $\mat{A}^{(n)}\in [0,1]^{s\times R}$, 
   \[
    \tsr{X} = \cpbrak{ \mat{A}^{(1)}, \cdots , \mat{A}^{(N)} }.
    \]
In the experiments, we set $R$ to be the same as the decomposition rank.
% \item \textbf{Random tensors}. Elements of $\tsr{X}\in [0,1]^{\otimes_{i=1}^N s}$ are chosen uniformly at random.

\item \label{tsr:quantum}\textbf{Quantum chemistry tensor.} We also test on the density fitting intermediate tensor arising in quantum chemistry, which is the Cholesky factor of the two-electron integral tensor~\cite{hohenstein:044103,hummel2017low}. For an order 4 two-electron integral tensor $\tsr{T}$, its Cholesky factor is an order 3 tensor $\tsr{D}$, with their relations shown as follows:
\[
\tsr{T}(a,b,c,d) = \sum_{s=1}^{P}\tsr{D}(a,b,s)\tsr{D}(c,d,s),
\]
where $P$ is the third mode dimension size of $\tsr{D}$.
CP decomposition can be performed on $\tsr{D}$ to provide the compressed form of the density fitting intermediate and can be used to speed up post Hartree-Fork calculations~\cite{hohenstein2012communication}. We generate the density fitting tensor via the PySCF library~\cite{sun2018pyscf}, which represents the compressed restricted Hartree-Fock wave function of an 8 water molecule chain system with a STO3G basis set. The generated tensor has size $904\times 56\times 56$. We set the CP rank to be 400.
\item \textbf{COIL dataset}. COIL-100 is an image-recognition data set that contains images of objects in different poses~\cite{nenecolumbia} and has been used previously as a tensor decomposition benchmark~\cite{battaglino2017practical,zhou2014decomposition}. There are 100 different object classes, each of which is imaged from 72 different angles. Each image has $128 \times 128$ pixels in three color channels. Transferring the data into tensor format, we have a $128\times 128\times 3\times 7200$ tensor. We fix the CP decomposition rank to be 15 and the Tucker decomposition rank to be $10\times 10\times 3\times 50$.
\item \textbf{Time-Lapse hyperspectral radiance images}. We consider the 3D hyperspectral imaging dataset called “Souto wood pile”~\cite{nascimento2016spatial}. The dataset is usually used on the benchmark of nonnegative tensor decomposition~\cite{liavas2017nesterov,ballard2018parallel}. The hyperspectral data consists of a tensor with dimensions $1024\times1344\times33\times 9$. We fix the CP decomposition rank to be 50 and the Tucker decomposition rank to be $100\times 100\times 3\times 3$.

\end{enumerate}

{The order three tensors are tested to justify the relative error bound shown in Section~\ref{sec:err-cp}. The performance of PP on higher order CP decompositions is also considered. We focus on the high rank CP decomposition, because for the cases with rank $R < s$, Tucker decomposition or HOSVD can be used to effectively compress the input tensor from dimensions of size $s$ to $R$, and then CP decomposition can be performed.}

We test the synthetic tensors for CP decomposition.
These tensors are all generated based on known factor matrices whose column sizes are equal to the decomposition rank, so these tensors have exact decompositions. For Tensor~\ref{tsr:col}, we test on {both order three tensors with both dimension sizes $s$ and decomposition rank $R$ equal to 400 and order four tensors with $s=R=120$}, and test the performance of pairwise perturbation on tensors with different collinearity for the exact input factor matrices. For Tensor~\ref{tsr:random}, we test on order three tensors with $s=R$, and test the performance of pairwise perturbation with different dimension size and corresponding rank. 

\begin{figure}[!ht]   
\centering

\subfloat[$N=3$, $s=R=400$]{\includegraphics[width=0.38\textwidth, keepaspectratio]{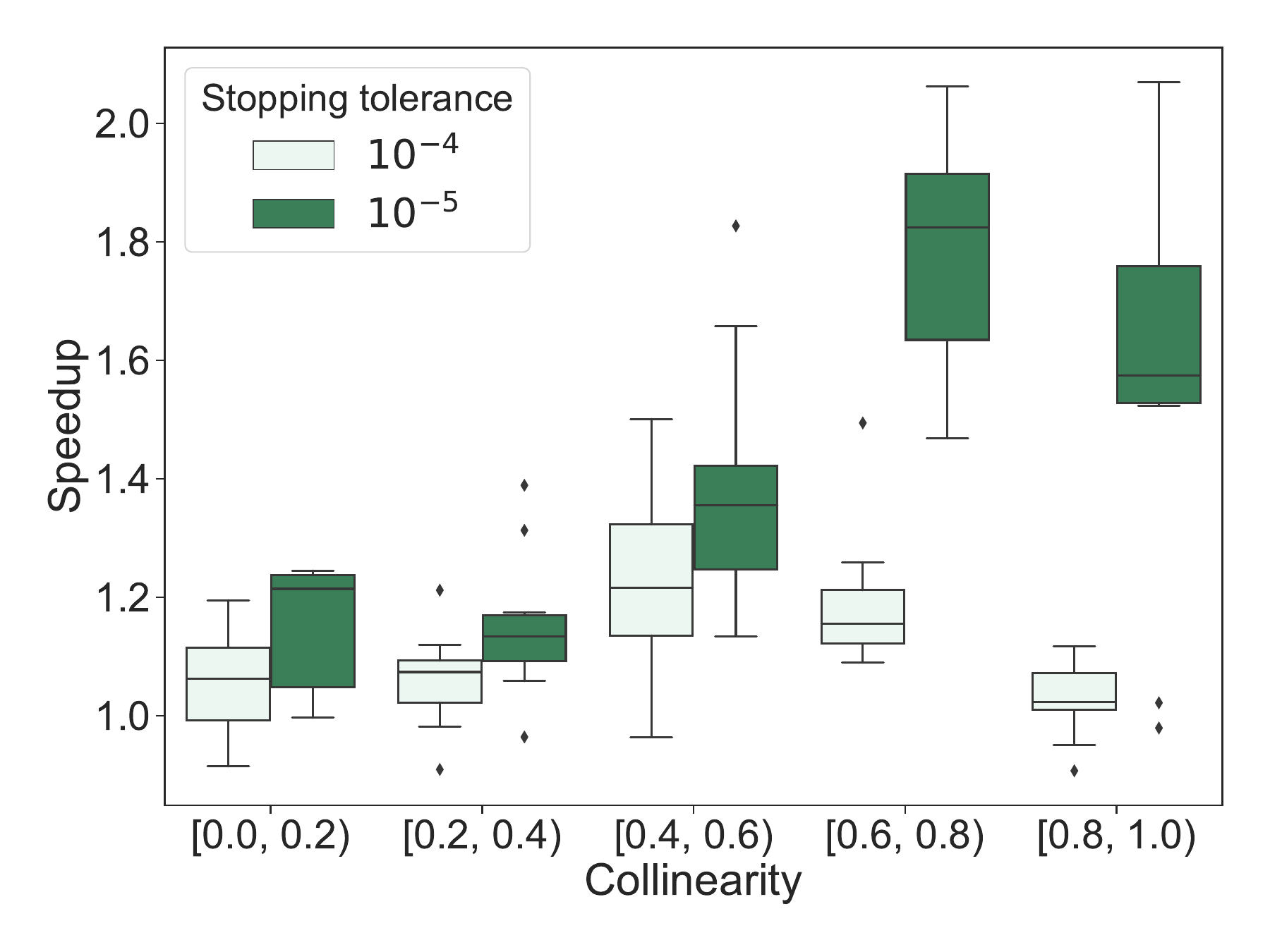}\label{cprandom:sub1}}
\subfloat[$N=4$, $s=R=120$]{\includegraphics[width=0.38\textwidth, keepaspectratio]{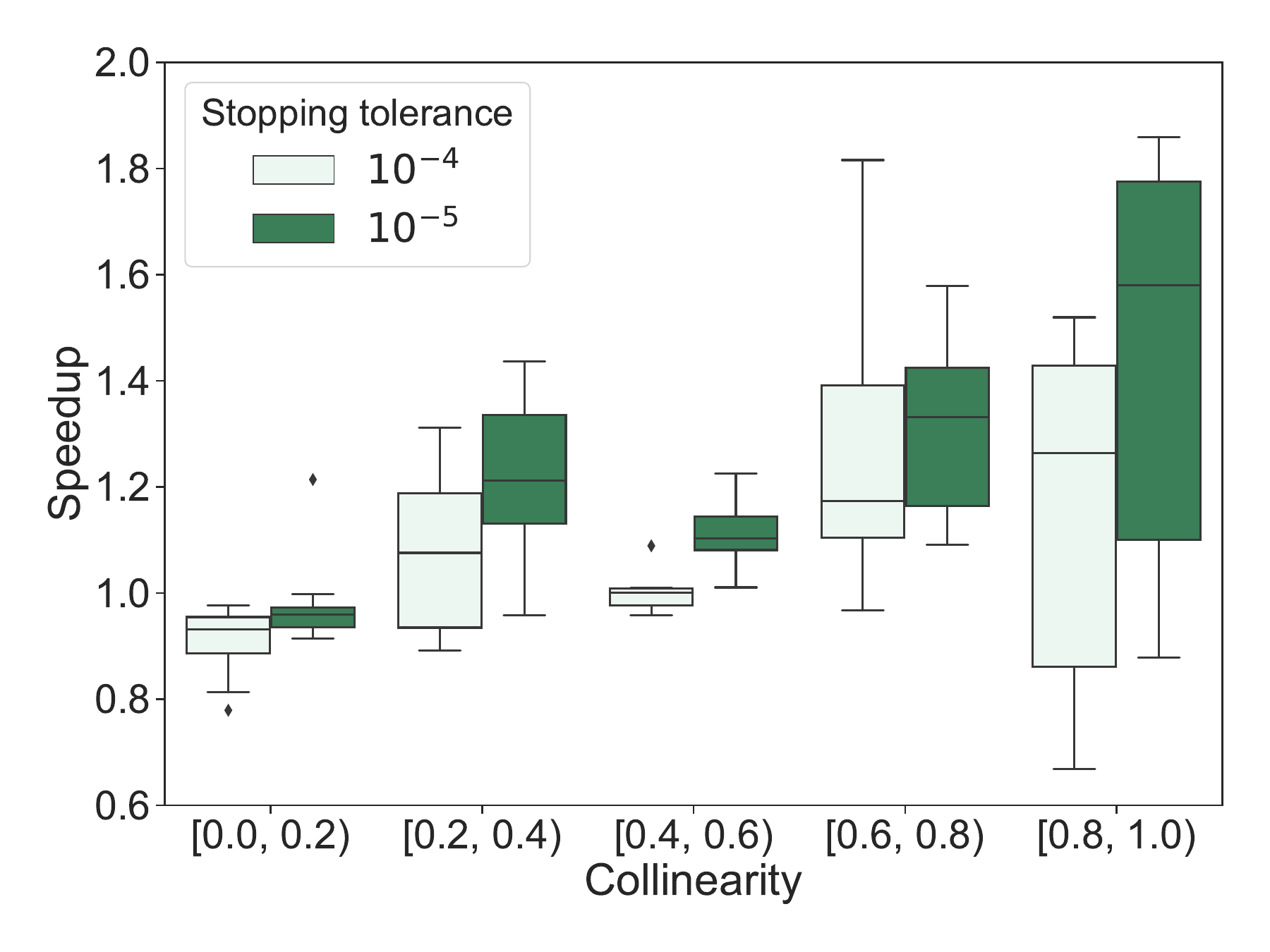}\label{cprandom:sub2}}

\subfloat[$N=3$, $s=R=400$, collinearity$\in[0.6, 0.8)$]{\includegraphics[width=0.38\textwidth, keepaspectratio]{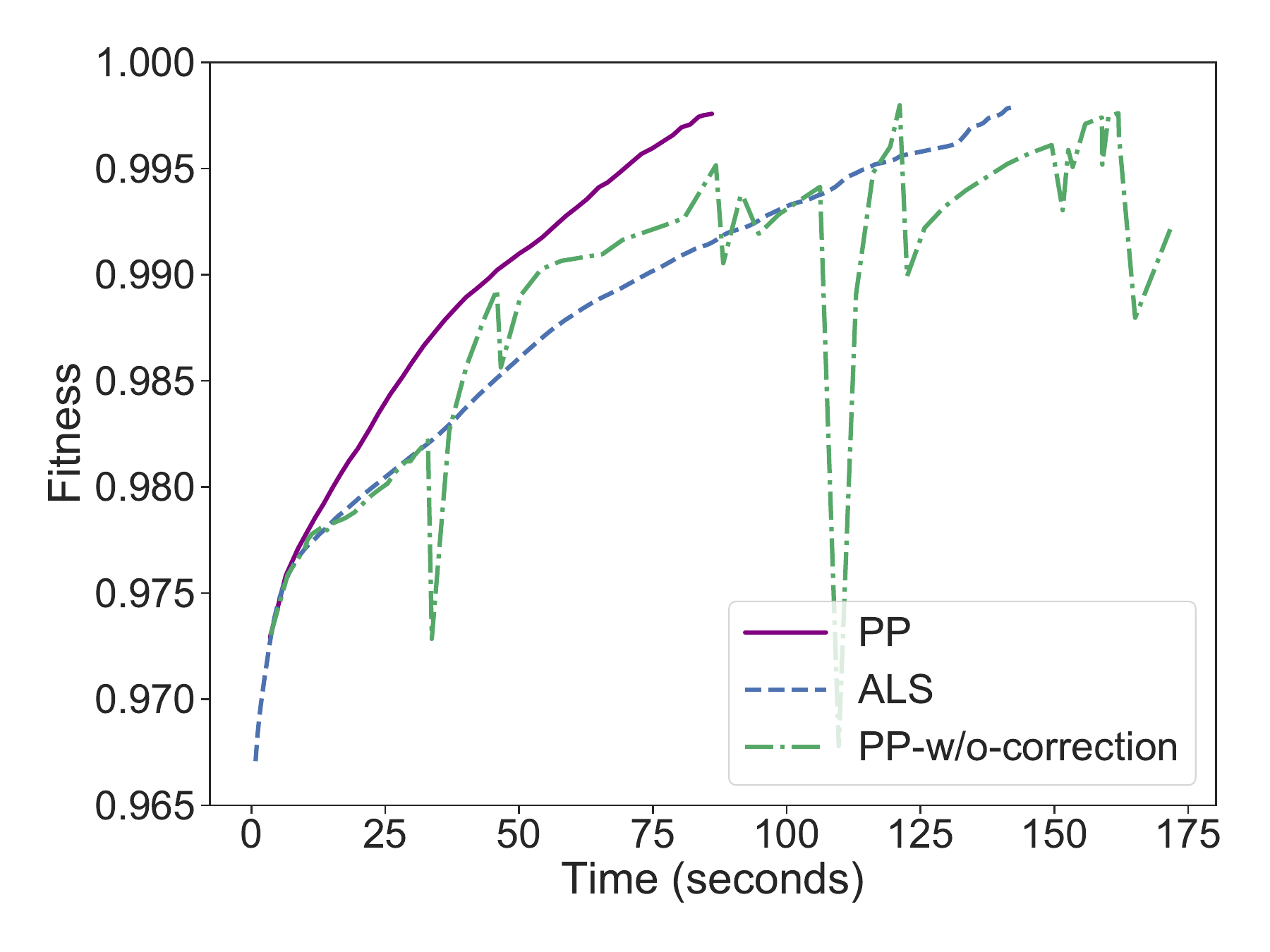}\label{cprandom:sub3}}
\subfloat[Random tensors, $N=3$, $s=R$]{\includegraphics[width=0.38\textwidth, keepaspectratio]{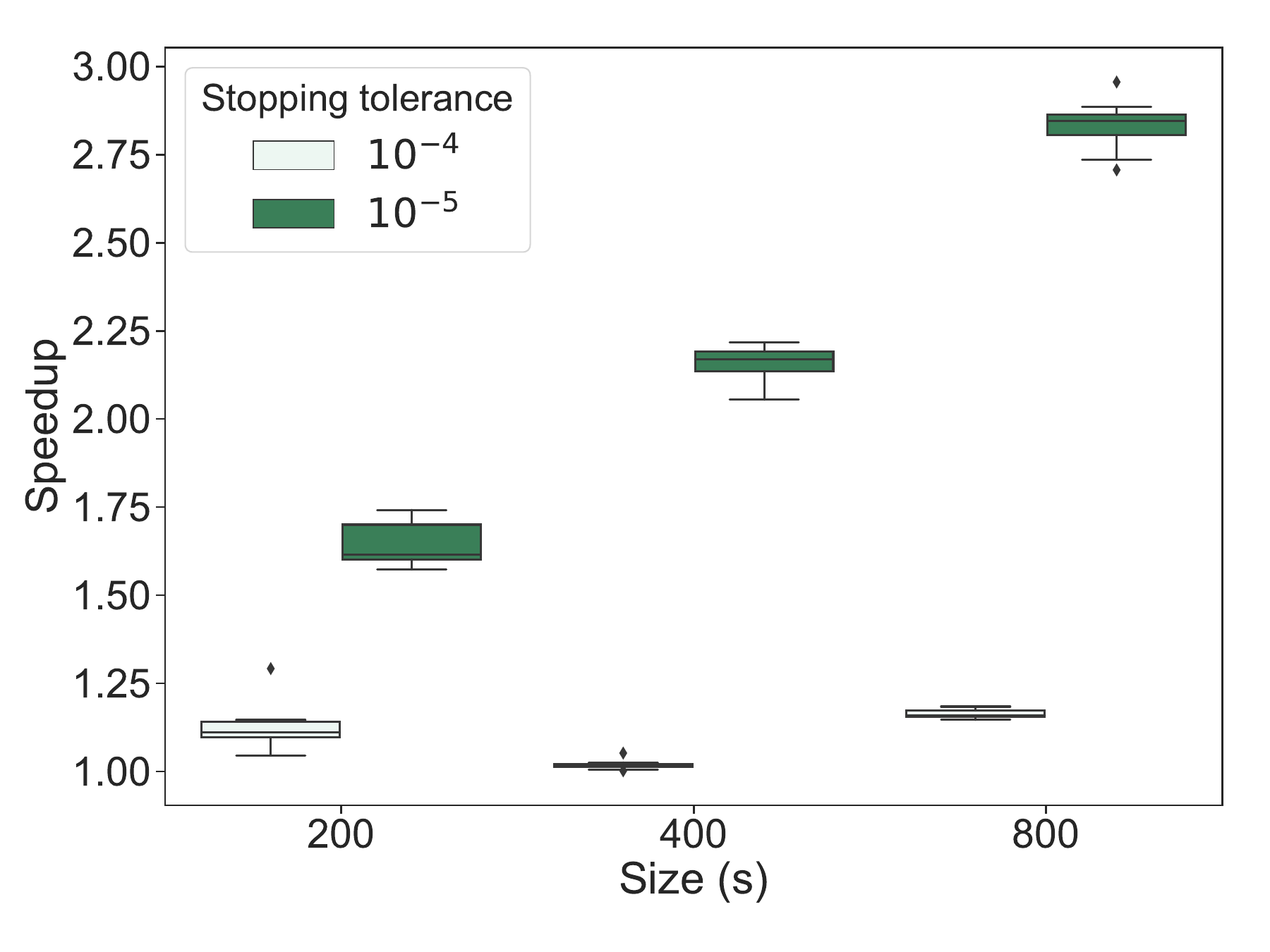}\label{cprandom:sub4}}

\caption[Optional caption for list of figures 5-8]{{\textbf{(a)(b)} Box plot of the relation between PP speed-up and input collinearity ranges for tensors with specific collinearity. \textbf{(c)} Fitness-time relation for the decomposition of one tensor with specific collinearity. \textbf{(d)} Box plot of the relation between PP speed-up and size in each mode for order 3 random tensors. } For all the box plots, each box is based on 10 experiments with different random seeds. 
Each box shows the 25th-75th quartiles, the median is indicated by a horizontal line inside the box, and outliers are displayed as dots.
}
\label{fig:randomcp}
\end{figure}

We display the speed-ups of pairwise perturbation compared to the dimension tree algorithm for synthetic tensors in Figure~\ref{fig:randomcp}. {Figures~\ref{cprandom:sub1} and~\ref{cprandom:sub2} show} the speed-up distribution with different exact factor matrices collinearity. We stop the algorithm when the stopping tolerance (defined as the fitness difference between two neighboring sweeps) is reached.
% The cases when collinearity is less than 0.6 is not shown in the figure, because for those cases the dimension tree based algorithm generally converges fast (in less than 200 iterations), and PP is not necessary. 
It can be seen that for both order three and order four tensors, PP achieves up to 2.0X speed-up, and high speed-up is achieved with tighter stopping tolerance. 
We find that the stricter stopping tolerance of $10^{-5}$ is valuable, as generally it permits about one more digit of accuracy to be achieved in fitness compared to a tolerance of $10^{-4}$. In addition,  experiments with a $10^{-4}$ stopping tolerance sometimes stop at transient swamps~\cite{mohlenkamp2019dynamics} with high decomposition residual, where ALS makes small progress for a period but the residual norm decreases more rapidly afterwards. 
% \TODO{explain that for some cases 1e-4 will stuck at the swamps.}
In addition, PP tends to have higher speed-ups with relatively high collinearity. This is because tensors with high collinearity will converge in more sweeps, and more PP approximated sweeps are activated as can be seen in Table~\ref{table:randomstat}. PP starts working early for almost all the experiments, as can be observed in Figure~\ref{cprandom:sub3}, where PP starts to have speed-up when the fitness is around 0.975 and the experiment time is less than 20 seconds, and in Table~\ref{table:randomstat}, where almost all the PP initialization steps start within 20 sweeps. In addition, the fitness increases monotonically in Figure~\ref{cprandom:sub3}, indicating that PP controls the approximation error well.

Figure~\ref{cprandom:sub3} also illustrates the importance of the second-order correction term, $\mat{V}^{(n,i,j)}$, in Equation~\ref{eq:ppupdate}. We set the PP tolerance to be 0.02 for the PP experiment without corrections, which results in more conservative use of PP approximate steps than with the 0.1 tolerance we use for PP with the second-order correction. As can be seen, without the correction, PP suffers from more instability and no speed-up is achieved for this experiment. Therefore, for all other experiments, the correction terms are included as part of PP.

Figure~\ref{cprandom:sub4} shows the speed-up distribution with different dimension size for order three tensors made by random factor matrices.
It can be seen from the figure than PP achieves up to 3.0X speed-up, and PP has larger speed-ups on larger tensors, consistent with the cost analysis.
\begin{table}[!ht]
\caption{
Detailed statistics of the results shown in Figure~\ref{fig:randomcp}. From left to right: the tensor configuration (col stands for collinearity), number of exact ALS sweeps within the PP algorithm, number of PP initialization sweeps, number of PP approximated sweeps, index of sweep when PP is first initialized (approximation begins), the fitness when PP is first initialized, and the final fitness of the experiment. All the data are the average statistics from ten experiments. 
% \TODO{Before this column, I would also like to see the number of ALS iterations performed by the standard dimension tree algorithm for each tensor, which would enable making meaningful apples-to-apples comparisons.}
} 
\label{table:randomstat}
\centering
\begin{adjustbox}{width=.8\textwidth}
\begin{tabular}{|c|c|c|c|c|c|c|}
  \hline
  Configuration
 & Num-ALS
 & Num-PP-init
 & Num-PP-approx
 & PP-init-sweep
 & PP-init-fit
 & Final-fit
 \\ 
  \hline
N=3, col$\in[0.0, 0.2)$ & 19.9 & 2.5 & 11.4 & 12.7 & 0.8203 & 0.9330 \\ 
  \hline
N=3, col$\in[0.2, 0.4)$ & 49.1 & 18.4 & 35.3 & 7.7 & 0.7937 & 0.9991 \\ 
  \hline
N=3, col$\in[0.4, 0.6)$ & 60.8 & 52.9 & 149.1 & 8.8 & 0.9345 & 0.9999 \\ 
  \hline
N=3, col$\in[0.6, 0.8)$ & 54.8 & 50.1 & 252.1 & 5.7 & 0.9751 & 0.9962\\ 
  \hline
N=3, col$\in[0.8, 1.0)$ & 12.8 & 9.4 & 51.1 & 4.3 & 0.9940 & 0.9966\\ 
  \hline
N=4, col$\in[0.0, 0.2)$ & 20.1 & 3.3 & 2.4 & 13.7 & 0.6802 & 0.8235\\ 
  \hline
N=4, col$\in[0.2, 0.4)$ & 15.4 & 1.9 & 5.6 & 14.0 & 0.9525 & 0.9945\\ 
  \hline
N=4, col$\in[0.4, 0.6)$ & 34.0 & 7.5 & 13.5 & 22.6 & 0.9477 & 0.9935 \\ 
  \hline
N=4, col$\in[0.6, 0.8)$ & 46.1 & 29.3 & 73.3 & 9.1 & 0.9365 & 0.9990 \\ 
  \hline
N=4, col$\in[0.8, 1.0)$ & 47.5 & 26.4 & 62.4 & 6.2 & 0.9831 & 0.9963 \\ 
   \hline
\end{tabular}
\end{adjustbox}
\end{table}

\begin{figure}[!ht]   
\centering

\subfloat[Fitness-time relation]{\includegraphics[width=0.38\textwidth, keepaspectratio]{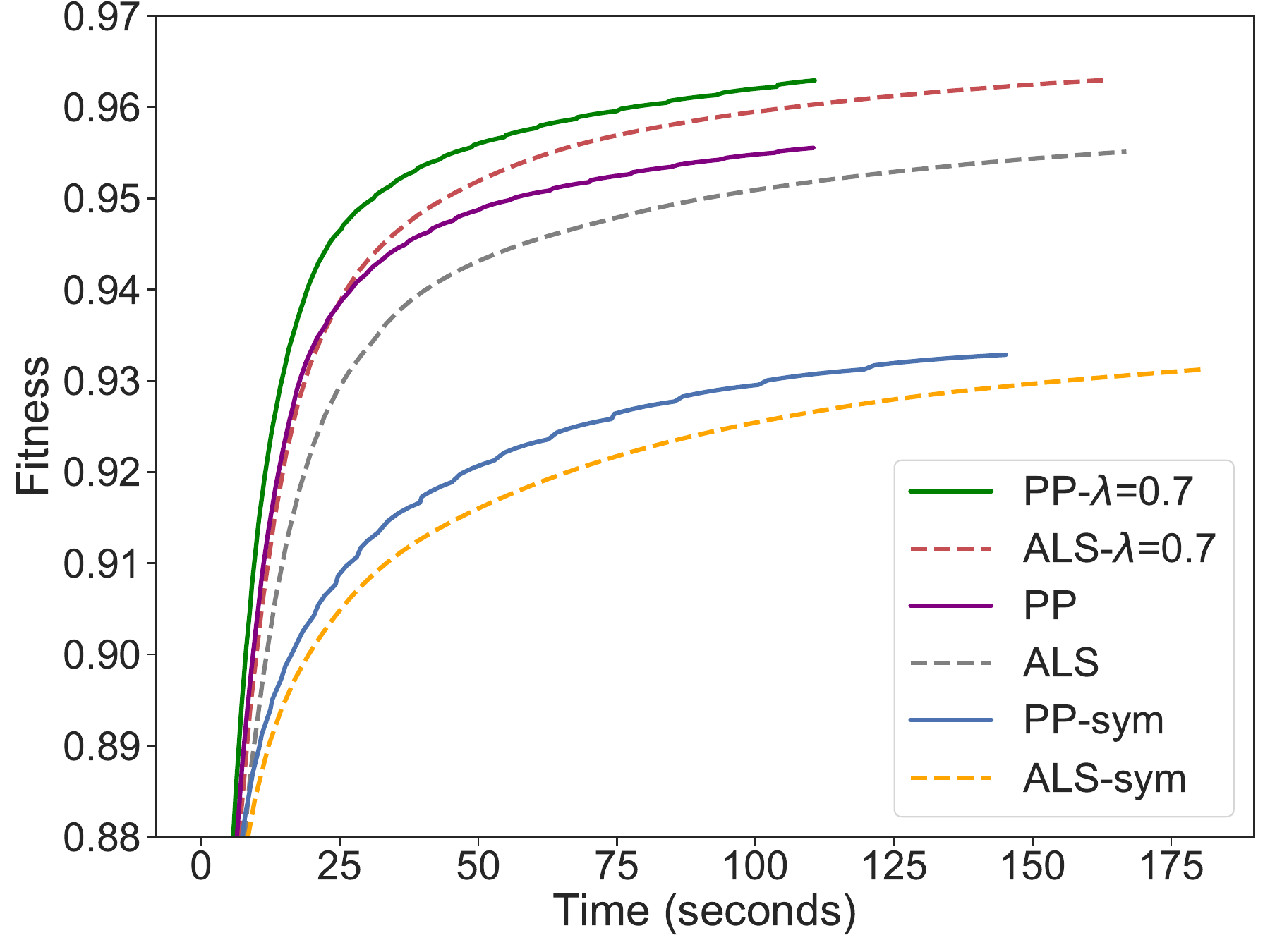}\label{scf:sub1}}
\subfloat[Fitness-sweep relation]{\includegraphics[width=0.38\textwidth, keepaspectratio]{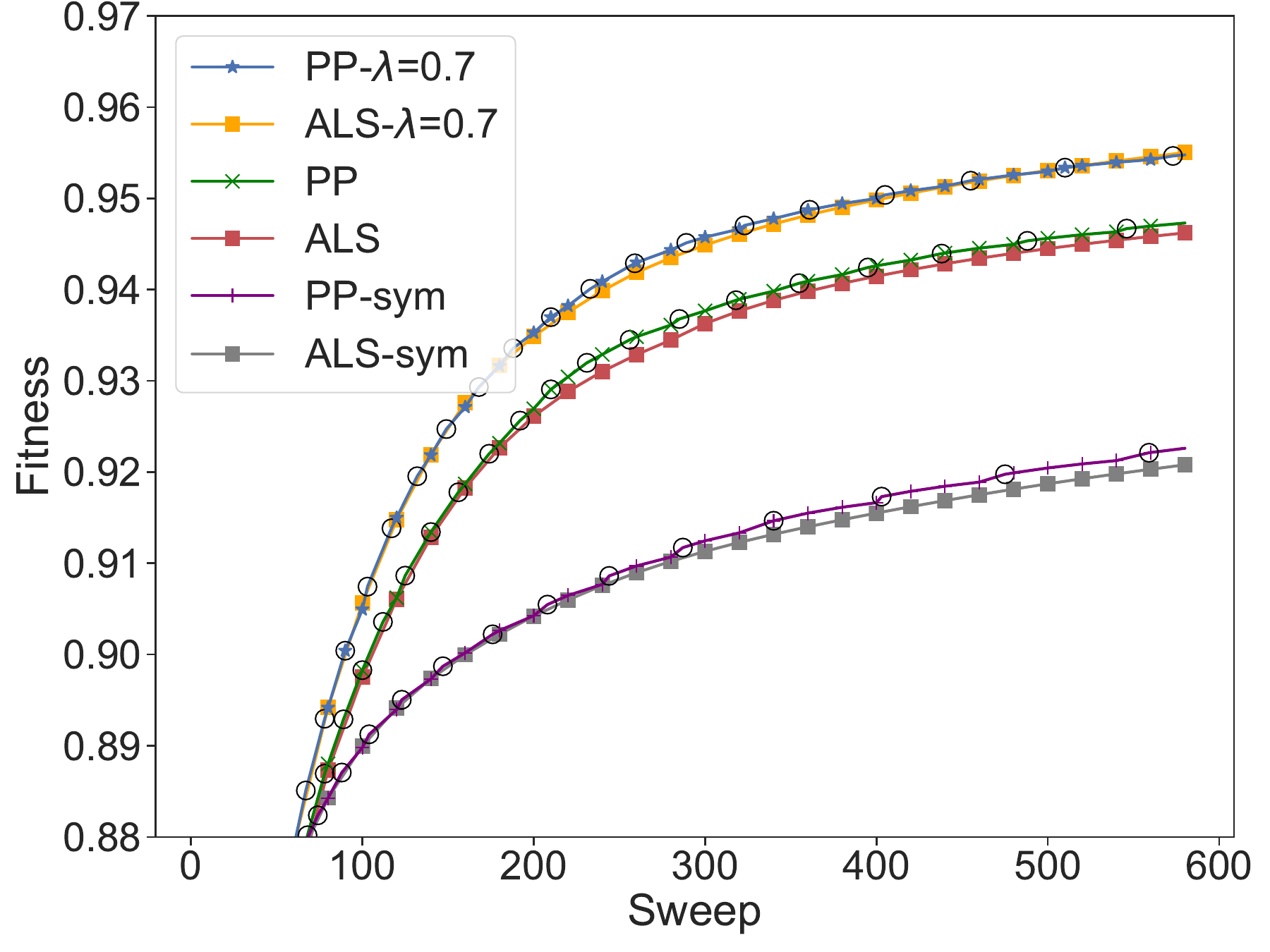}\label{scf:sub2}}

\caption[]{
Comparison of PP and the dimension tree algorithm for CP decomposition on the quantum chemistry tensor with different variants. PP-sym/ALS-sym denotes the decomposition with symmetry constraint. PP-$\lambda$=0.7/ALS-$\lambda$=0.7 denotes the decomposition with step size chosen to be 0.7. (b) shows detailed fitness-sweep relation for part of the sweeps. In (b),
squares on the
dimension tree lines represent the results per 20 sweeps (including all PP initialization, PP approximated and ALS sweeps), and the black circles on pairwise
perturbation lines represent the time when pairwise perturbation re-initializes. 
% \TODO{In all figures, when circles/squares/dots/plus are used to represent x (x=1,10,20) sweeps for ALS iterations performed by the pp method, does this include pp-approx sweeps as well? This better be elaborated in the text.}
}
\label{fig:scf}
\end{figure}

\begin{table}[!htb]
\caption{
{
Detailed statistics of different experiments. From left to right: the tensor type, number of ALS sweeps until PP experiments are finished, number of PP initialization sweeps, number of PP approximated sweeps, the average time of each ALS sweep, the average time of each PP initialization sweep, and average time of each PP approximated sweep.
}
} 
\label{table:tensorstat}
\centering
\begin{adjustbox}{width=.8\textwidth}
\begin{tabular}{|c|c|c|c|c|c|c|}
  \hline
  Tensor
 & Num-ALS
 & Num-PP-init
 & Num-PP-approx
 & Time-ALS
 & Time-PP-init
 & Time-PP-approx
 \\ 
  \hline
Chemistry (Figure~\ref{fig:scf}) & 44 & 40 & 1416 & 0.1116 & 0.1655 & 0.0703 \\ 
  \hline
Coil (Figure~\ref{real:sub1}) & 31 & 22 & 147 & 2.357 & 3.660 & 0.0648 \\ 
  \hline
TimeLapse (Figure~\ref{real:sub2}) & 23 & 16 & 161 & 0.4087 & 0.9236 & 0.0562 \\ 
  \hline
Chemistry (Figure~\ref{fig:scf_par}) & 88 & 54 & 1358 & 5.338 & 9.608 & 2.254\\ 
  \hline
\end{tabular}
\end{adjustbox}
\end{table}

We also test the performance of pairwise perturbation on CP decomposition of the quantum chemistry tensor, as is shown in Figure~\ref{fig:scf}, {with detailed statistics shown in Table~\ref{table:tensorstat}.} In addition to the original ALS algorithm, we consider two other ALS variants for this problem: the ALS algorithm with different update step size, and the ALS algorithm with a symmetry constraint~\cite{hummel2017low}.
The algorithm with different update step size updates the factor matrices $\mat{A}^{(n)}$ based on
\[
\mat{A}^{(n)}_{new} = (1-\lambda)\mat{A}^{(n)} + \lambda \mat{M}^{(n)}\boldsymbol{\Gamma}^{(n)}{}^{\dagger},
\]
where $\lambda$ is the update step size. A good choice of $\lambda$ can help achieving better convergence.
The symmetry constrained algorithm considers the input tensor is symmetric in the two equidimensional modes and restricts the two factor matrices for these two modes to be the same:
$
    \tsr{X} = [\![ \mat{A}, \mat{B} , \mat{B}
    ]\!].
$
We update $\mat{A}$ the same as the original ALS step, and update $\mat{B}$ with the update step size $\lambda=0.8$ to avoid divergence.
 
As is shown in Figure~\ref{scf:sub1}, for all the variants of ALS algorithms, PP performs better than the dimension tree algorithm, achieving {1.25-1.52X} speed-up. {All the experiments are stopped after 1500 sweeps.} It can also be observed in Figure~\ref{scf:sub2} that PP usually restarts {once approximately every 40 sweeps}, and for each sweep, the fitness of both ALS and PP are almost the same, indicating that PP controls the approximation error well.

\begin{figure}[!ht]   
\centering

\subfloat[CP decomposition of Coil Dataset]{\includegraphics[width=0.38\textwidth, keepaspectratio]{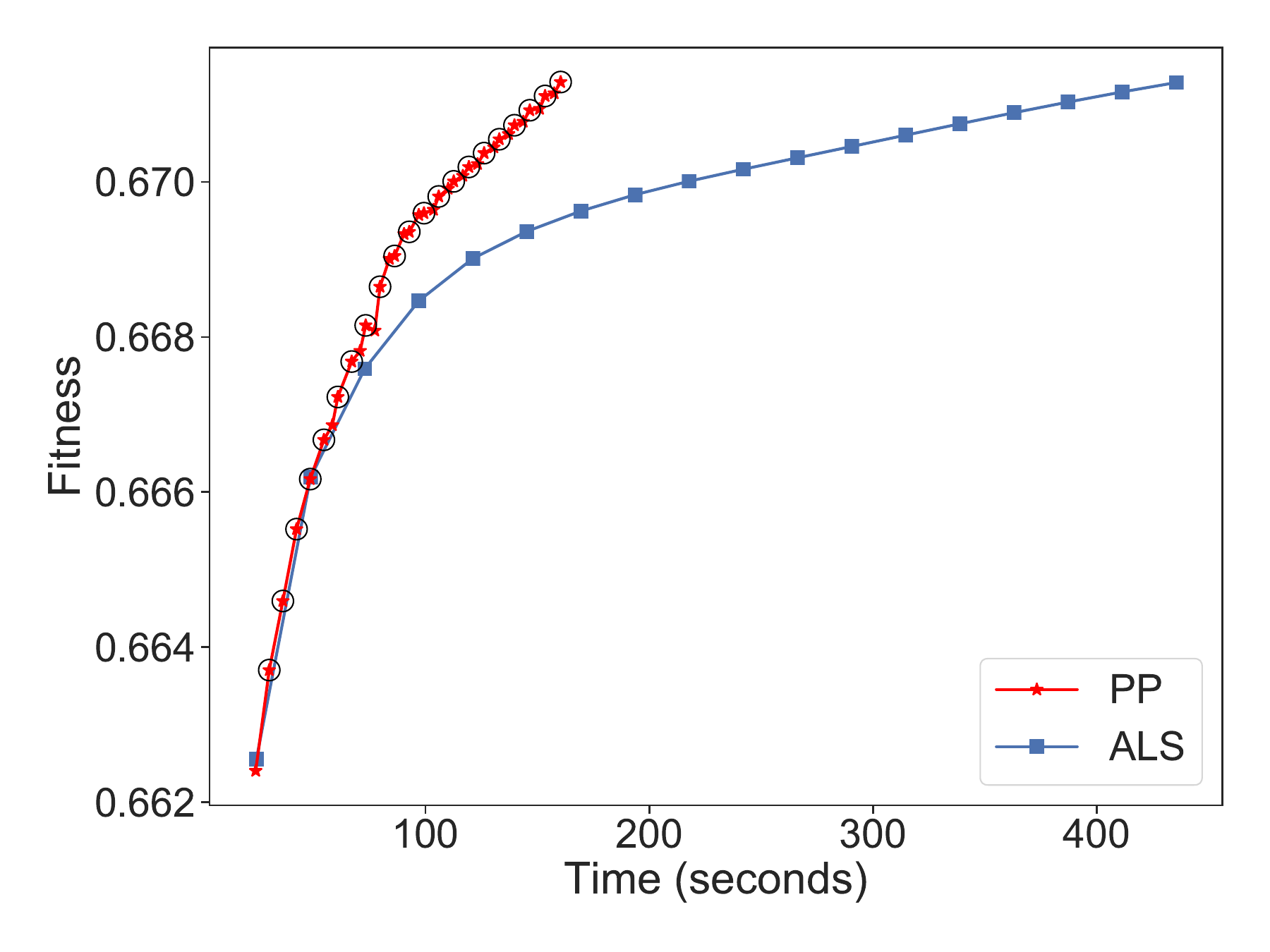}\label{real:sub1}}
\subfloat[CP decomposition of Time-Lapse Dataset]{\includegraphics[width=0.38\textwidth, keepaspectratio]{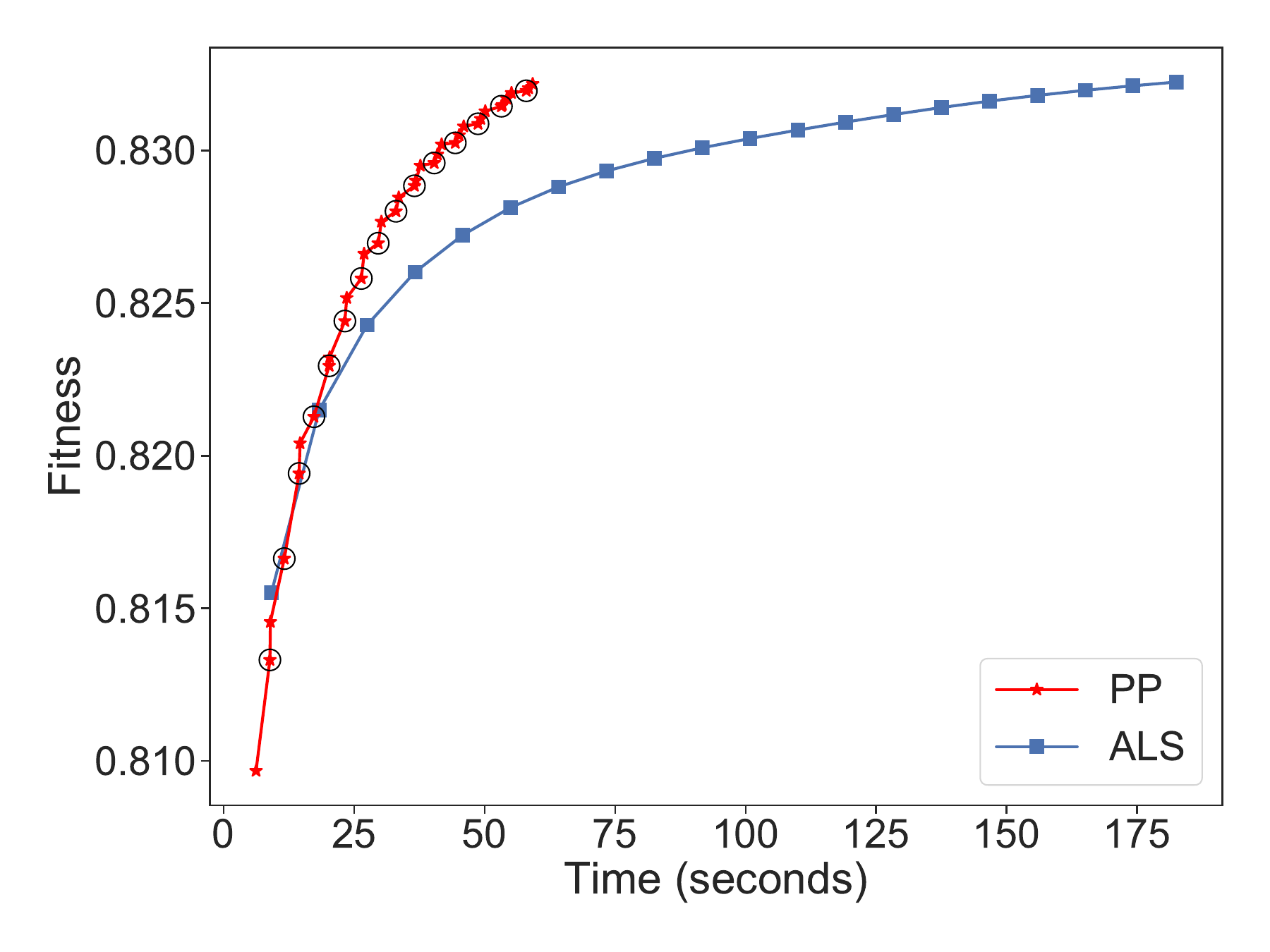}\label{real:sub2}}

\subfloat[Tucker decomposition of Coil Dataset]{\includegraphics[width=0.38\textwidth, keepaspectratio]{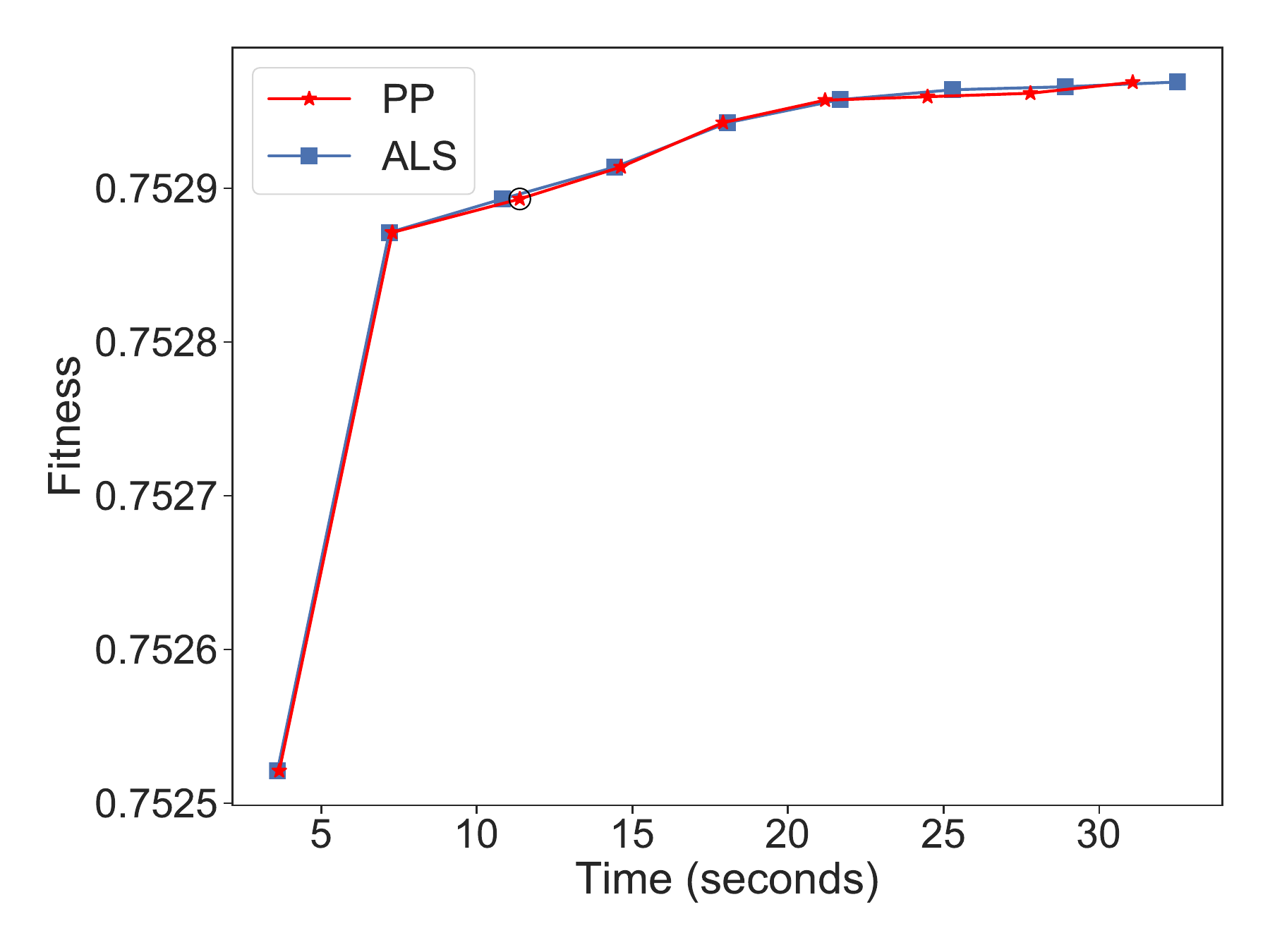}\label{real:sub3}}
\subfloat[Tucker decomposition of Time-Lapse Dataset]{\includegraphics[width=0.38\textwidth, keepaspectratio]{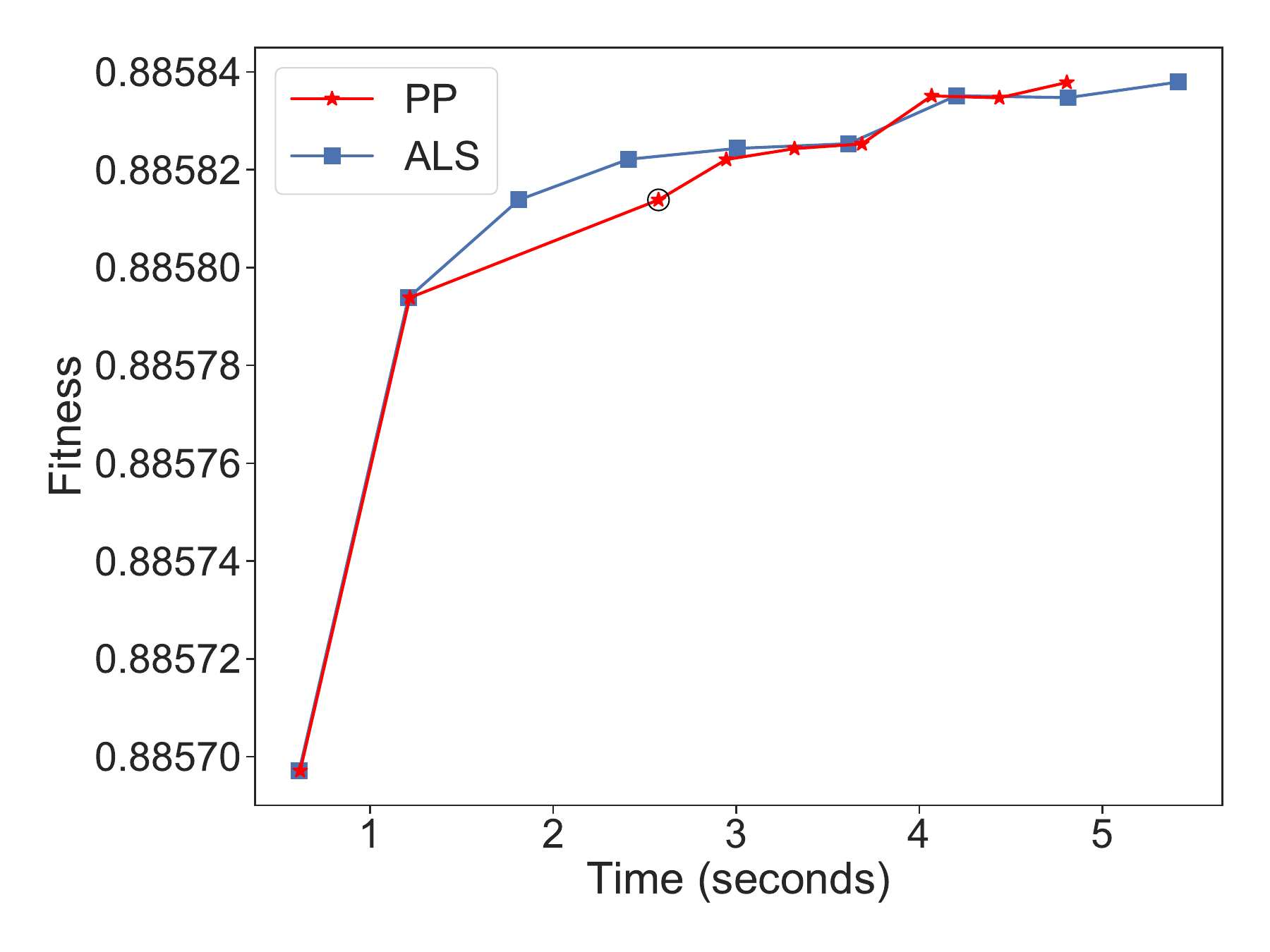}\label{real:sub4}}

\caption[]{
{
Experimental results on image datasets between pairwise perturbation and ALS for CP and Tucker decompositions. Each dot on the ALS/PP lines represents the results per 10 sweeps for CP and per sweep for Tucker decomposition (including all PP initialization, PP approximated and ALS sweeps), and the black circles on pairwise perturbation lines represent the time when pairwise perturbation restarts. }
}
\label{fig:real}
\end{figure}

We test the performance of pairwise perturbation on real image datasets with {NumPy} in Figure~\ref{fig:real}, {with detailed statistics shown in Table~\ref{table:tensorstat}.} We display the fitness and execution time for CP decomposition of the two image datasets in Figure~\ref{real:sub1},~\ref{real:sub2}. We observe that pairwise perturbation achieves a lower execution time for them. {The speed-up for the Coil Dataset is 2.72X and for the Time-Lapse Dataset is 3.1X.}

Pairwise perturbation is also used to speedup HOOI procedure in Tucker decomposition. However, as noted in other work~\cite{austin2016parallel}, we observed that ALS sweeps do not significantly lower the residual beyond what is achieved by the first sweep (HOSVD). 
We display the fitness
and the execution time for Tucker decomposition of the two real datasets in Figure~\ref{real:sub3},~\ref{real:sub4}. The speed-up for the Coil Dataset is {1.05X} and for the Time-Lapse Dataset is {1.13X}. The reason for no obvious speed-up for the Coil Dataset is that the tensor is not equidimensional (one dimension is 7200, while others are all smaller or equal to 128). Therefore, when updating the factor matrix with a dimension of 7200, the number of operations necessary to construct the SVD input for PP are similar to that for the dimension tree Tucker algorithm. For the Time-Lapse Dataset, the tensor dimensions are more evenly distributed (two dimensions are greater than 1000), and we observe a greater speed-up. We conclude that the proposed Tucker PP algorithm performs better when used on the tensors whose dimensions are approximately equal.

\subsection{Parallel Performance}

\begin{figure}[!ht]
\centering

\subfloat[Strong scaling of CP decomposition]{\label{fig:par1}\includegraphics[width=0.38\textwidth, keepaspectratio]{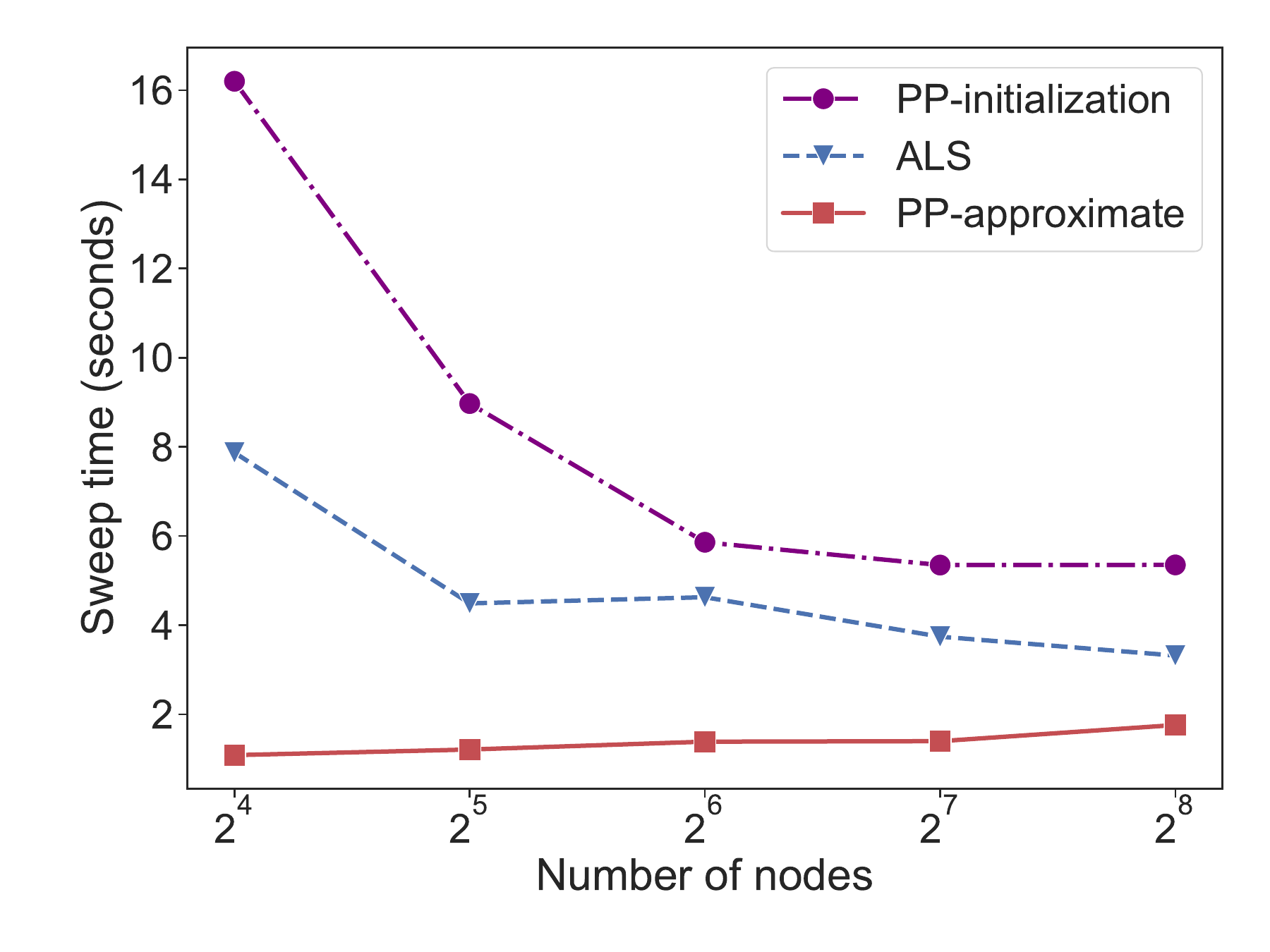}}
\subfloat[Weak scaling of CP decomposition]{\label{fig:par2}\includegraphics[width=0.38\textwidth, keepaspectratio]{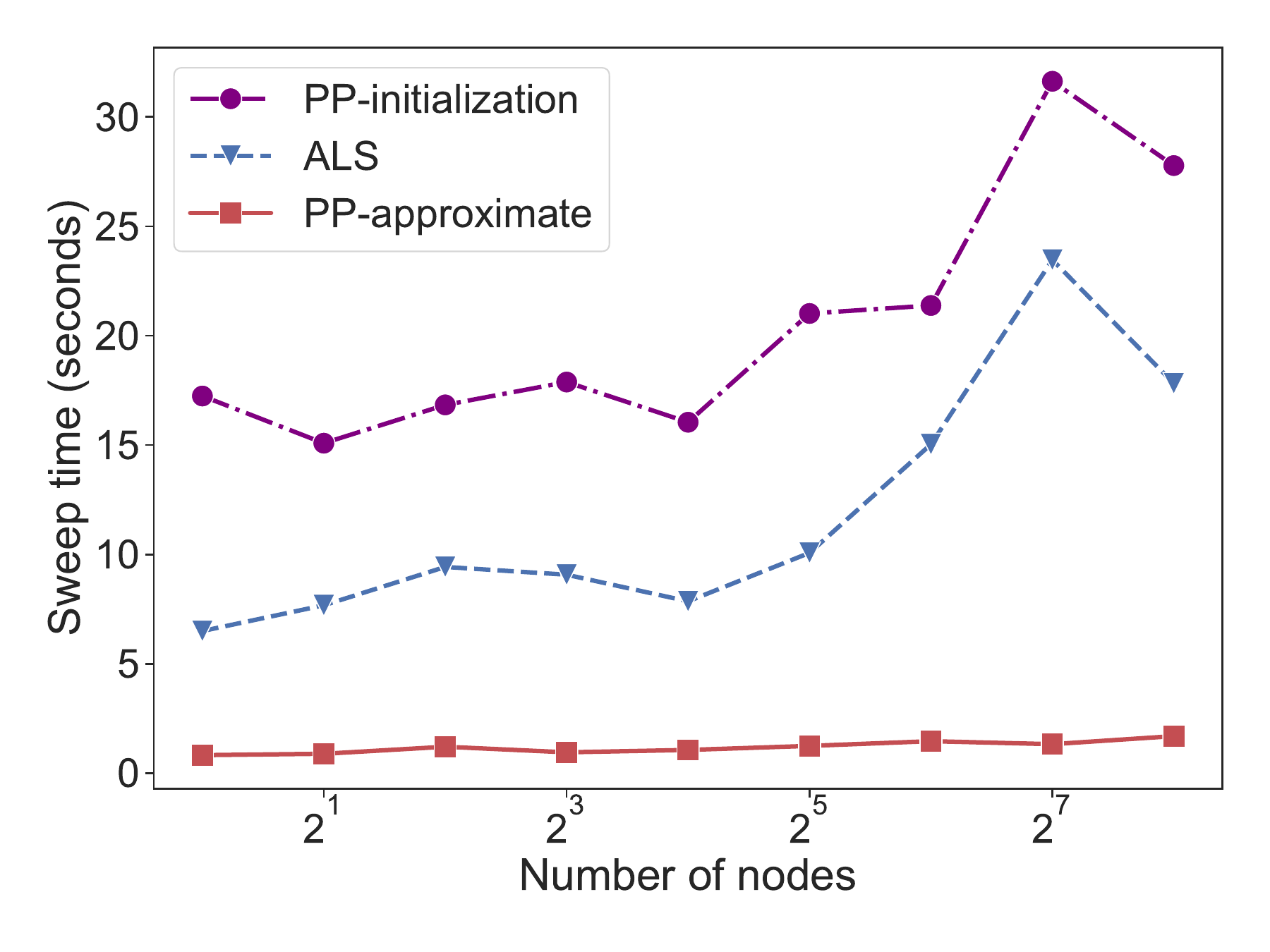}}
\caption{Benchmark results for ALS sweeps with Cyclops, taken as the mean time across 5 sweeps.}
\label{fig:bench}
\end{figure}

% \TODO{The parallel performance is also hard to evaluate, as it is based on the authors' implementations, which are not explained. The revision includes the statement that "Parallelism is used to accelerate the tensor contractions as well as the linear system solve." But for the strong scaling experiment, the linear system solves have dimension 6x6 with 50 right hand sides. Is this really parallelized? The results in Fig 6a show an approximate speedup of only 2x going from 4 to 256 nodes for ALS and PP-initialization. This seems inefficient, but there's not enough detail to understand why there is this lack of scalability in the baseline computations. Again, it's hard to trust the speedup numbers without validating the performance of the baseline algorithm.
% }

We perform a parallel scaling analysis to compare the simulation time for one ALS sweep of the dimension tree algorithm to the initialization and the approximated step of the pairwise perturbation algorithm with Cyclops in Figure~\ref{fig:bench}. 
Parallelism is used to accelerate the tensor contractions via calling Cyclops kernels as well as the linear system solve via calling ScaLAPACK kernels.
The Cyclops library reduces each tensor contraction to a matrix multiplication. For the PP initialization step, this approach either keeps the input tensor in place, performs local multiplications, and afterwards performs a reduction on the output tensor when the rank $R$ is small, or performs a general 3D parallel matrix multiplication when $R$ is high. For the PP approximated step, this approach parallelizes small-sized batched matrix-vector products and result in over-parallelization. We direct readers to the reference \cite{ma2020efficient} for a detailed communication cost analysis and a more communication efficient algorithm for parallel pairwise perturbation.

We use 8 processes per node and 8 threads per process for the benchmark experiments.
The pairwise perturbation initialization step includes the construction of the pairwise perturbation operators, and is therefore much slower than the approximated steps.
For strong scaling, we consider order $N=6$ tensors with dimension $s=50$ and rank $R=6$ CP and Tucker decompositions.
For weak scaling, on $p$ processors, we consider order $N=6$ tensors with dimension $s=\lfloor 32 p^{1/6}\rfloor$ and rank $R=\lfloor 4p^{1/6}\rfloor$. 

For weak scaling, Figure~\ref{fig:bench} shows that with the increase of number of nodes, the step time for all three steps increases. The approximated step time of pairwise perturbation is always much faster (7.8 and 10.5 times faster on 1 node and 256 nodes, respectively, compared to the dimension tree based ALS step time) than the two other steps, showing the good scalability of pairwise perturbation. For strong scaling, the figure shows that the approximated step time of pairwise perturbation increases with the number of nodes, while the two other step times decrease. The PP approximated step is much cheaper computationally and becomes dominated by communication with increasing node counts, thereby slowing down in step time. For the two other steps, the matrix calculation time will be decreased a lot with the increase of node number, thereby the step time is decreased. Overall, we observe that the potential performance benefit of pairwise perturbation is greater for weak scaling.

\subsection{Parallel Experimental Results}
\label{sec:par}

We test the parallel performance of pairwise perturbation with Cyclops on a quantum chemistry tensor. Similar to Section~\ref{sec:numpyexp}, we generate the order three density fitting tensor representing the compressed restricted Hartree-Fock wave function of an 40 water molecule chain system with a STO3G basis set. The generated tensor has size $4520\times 280\times 280$. We set the CP rank to be 1800.
\begin{figure}[!ht]   
\centering
\subfloat[Fitness-time relation]{\includegraphics[width=0.37\textwidth, keepaspectratio]{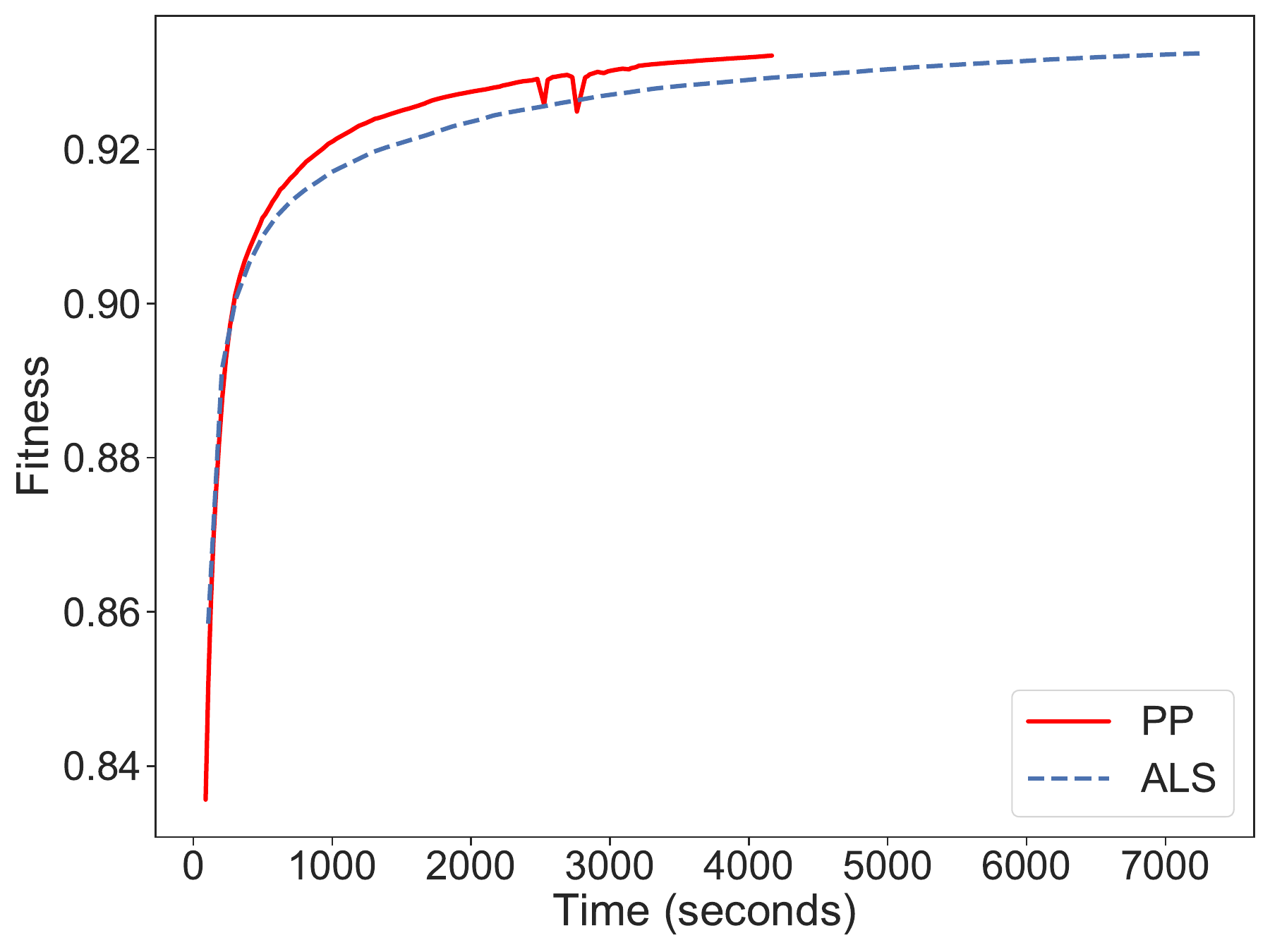}\label{scfpar:sub1}}
\subfloat[Fitness-sweep relation]{\includegraphics[width=0.38\textwidth, keepaspectratio]{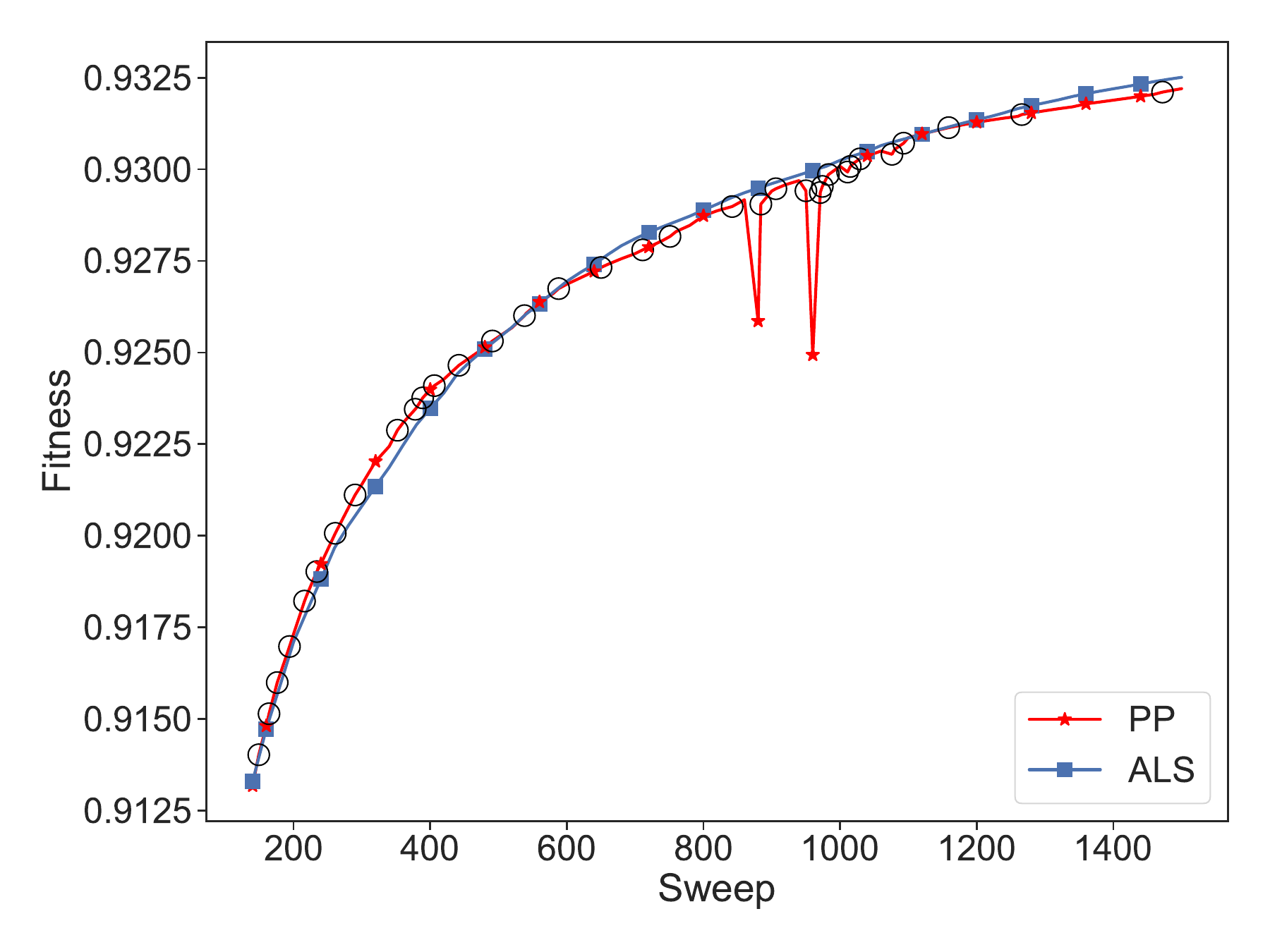}\label{scfpar:sub2}}
\caption[]{
{
Comparison of PP and the dimension tree algorithm for CP decomposition on the quantum chemistry tensor with Cyclops. (b) shows detailed fitness-sweep relation for part of the sweeps. In (b),
squares on the
dimension tree lines represent the results per 20 sweeps (including all PP initialization, PP approximated and ALS sweeps), and the black circles on pairwise
perturbation lines represent the time when pairwise perturbation re-initializes.
}
}
\label{fig:scf_par}
\end{figure}
We show the parallel performance with Cyclops for the quantum chemistry tensor in Figure~\ref{fig:scf_par}, {with detailed statistics shown in Table~\ref{table:tensorstat}.} We perform experiments on 4 KNL nodes, leveraging 64 processors on each node. For the PP experiment, after first level contractions of the PP dimension tree, we redistribute the resulting tensor across all the processes so that it is partitioned in the rank mode, which makes the PP approximated steps faster.
{It can be seen that PP performs better than the dimension tree algorithm, achieving 1.75X speed-up to reach a fitness of 0.933. It can also be observed in Figure~\ref{scfpar:sub2} that for most of the sweeps, the fitness of both the dimension tree algorithm and PP are almost the same, indicating that PP controls the approximation error well.}

\section{Conclusion}
\label{sec:conclu}
We have provided the pairwise perturbation algorithm for both CP and Tucker decompositions for dense tensors. The advantage of this algorithm is that it uses perturbative corrections rather than recomputing the tensor contractions to set up the quadratic optimization subproblems. Our error analysis demonstrates that it is accurate when the factor matrices change little. Specifically, our implementation of pairwise perturbation shows speed-ups for CP-ALS of up to {3.1X} across all synthetic and application data with respect to the best known method for exact CP-ALS with the NumPy-based sequential implementation.

% both with the NumPy-based sequential implementation and with the Cyclops-based distributed-memory parallel implementation.

%At this stage, we only apply pairwise perturbation to dense tensors. 
We leave analysis and benchmarking of pairwise perturbation with sparse tensors for future work. Since contraction between the input tensor and the first factor matrix requires fewer operations, there is less likely to be a benefit in using pairwise perturbation.
Additionally, it is likely of interest to investigate more efficient adaptations of pairwise perturbation for non-equidimensional tensors and to experiment with alternative schemes for switching between regular ALS and pairwise perturbation.

\section{Acknowledgments}
\label{sec:ack}
The authors are grateful to Daniel Kressner for pointing out the connection to the Hurwitz problem, to Fan Huang for finding the $8\times 8 \times 8$ perfectly conditioned tensor, and to Nick Vannieuwenhoven for helpful comments.
The authors were supported by the US NSF OAC SSI program, award No.\ 1931258.
This work used the Extreme Science and Engineering Discovery Environment (XSEDE), which is supported by National Science Foundation grant number ACI-1548562.
We used XSEDE to employ Stampede2 at the Texas Advanced Computing Center (TACC) through allocation TG-CCR180006.

\bibliographystyle{abbrv}
\bibliography{draft_MTTKRP}

\section{Appendix: Error Bounds based on a Tensor Condition Number}
\label{sec:cond_glb}

We provide relative error bounds for the pairwise perturbation algorithm for both CP-ALS and Tucker-ALS for tensors that are `well-conditioned', in a sense that is defined in this appendix.
However, results related to the Hurwitz problem regarding multiplicative relations of quadratic forms~\cite{Hurwitz1898}, imply that equidimensional order three tensors can have a bounded condition number only if their dimension is $s\in \{1,2,4,8\}$.
We provide families of tensors with unit condition number with such dimensions.
The results shed further light on the stability of MTTKRP as well as ALS, and yield nontrivial bounds for small tensors.
For factorization of large tensors, the bounds proven in this section are not meaningful, since their condition number is necessarily infinite for at least one ordering of modes.

\subsection{Tensor Condition Number}
\label{subsec:cond}
We consider a notion of tensor condition number that corresponds to a global bound on the conditioning of the multilinear vector-valued function, 
$\fvcr{g}_{\tsr{T}} : \otimes_{i=2}^{N} \mathbb{R}^{s_{i}} \to \mathbb{R}^{s_1}$ associated with the product of the tensor with vectors along all except the first mode,
\[{
\fvcr{g}_{\tsr{T}}\left(\vcr{x}^{(2)},\ldots, \vcr{x}^{(N)}\right) = 
\tsr{T} \bigtimes_{i\in\{2,\ldots, N\}}\vcr{x}^{(i)T},
}
\]
where $\tsr{T}$ is contracted with {$\vcr{x}^{(i)}$} along its $i$th mode.
The norm and condition number are given by extrema of the norm amplification of $\fvcr{g}_{\tsr{T}}$, which are described by the amplification function $\fvcr{f}_{\tsr{T}} : \otimes_{i=2}^{N} \mathbb{R}^{s_{i}} \to \mathbb{R}$,
\[
{
\fvcr{f}_{\tsr{T}}\left(\vcr{x}^{(2)},\ldots, \vcr{x}^{(N)}\right) = \frac{\vnrm{\fvcr g_{\tsr{T}}(\vcr{x}^{(2)},\ldots, \vcr{x}^{(N)})}}{\vnrm{\vcr{x}^{(2)}}\cdots \vnrm{\vcr{x}^{(N)}}}.
}
\]
The spectral norm of the tensor corresponds to its supremum,
\[\tnrm{\tsr T} =\sup\{\fvcr f_{\tsr{T}}\}.\] 
The tensor condition number can be defined as
\[\kappa(\tsr T) = \sup\{\fvcr{f}_{\tsr{T}}\}/\inf\{\fvcr{f}_{\tsr{T}}\},\]
which enables quantification of the worst-case relative amplification of error with respect to input for the product of a tensor with vectors along all except the first mode.
In particular, $\kappa(\tsr{T})$ provides an upper bound on the relative norm of the perturbation of $\fvcr{g}_{\tsr{T}}$ with respect to the relative norm of any perturbation to any input vector.

For a matrix $\mat{M}\in\mathbb{R}^{s_1\times s_2}$, if $s_1>s_2$ the above notion of condition number gives $\kappa(\mat{M})=\frac{\sigma_\text{max}(\mat{M})}{\sigma_\text{min}(\mat{M})}$ where $\sigma_\text{min}(\mat{M})$ is the smallest singular value of $\mat{M}$ in the reduced SVD, while if $s_1<s_2$, then $\kappa(\mat{M})=\infty$.
When tensor dimensions are unequal, the condition number is infinite if the first dimension is not the largest, so for some $i$, $s_i>s_1$.
Aside from this condition, the ordering of modes of $\tsr{T}$ does not affect the condition number, since for any $m>1$, the supremum/infimum of $\fvcr{f}_{\tsr{T}}$ over the domain of unit vectors are for some choice of $\vcr{x}^{(2)},\ldots, \vcr{x}^{(m-1)}, \vcr{x}^{(m+1)},\ldots,\vcr{x}^{(N)}$ the maximum/minimum singular values of
\[
{
\mat{K} = 
\tsr{T} \bigtimes_{i\in\{2,\ldots, m-1, m+1, \ldots, N\}}\vcr{x}^{(i)T}.
}
\]

\subsection{Well-Conditioned Tensors}

We provide two examples of order three tensors that have unit condition number.
Other perfectly conditioned tensors can be obtained by multiplying the above tensors by an orthogonal matrix along any mode (we prove below that such transformations preserve condition number).
The first example has $s_i=2$, and yields a Givens rotation when contracted with a vector along the last mode.
It is composed of two slices:
\begin{align*}
\begin{bmatrix} 1 &  \\  & 1 \end{bmatrix} \quad \text{and} \quad \begin{bmatrix}  & 1 \\ -1 &  \end{bmatrix}.
\end{align*}
The second example has $s_i=4$ and is composed of four slices:
\begin{align*}
\begin{bmatrix} 1 & & &  \\ & 1 & & \\ & & 1 & \\ & & & -1 \end{bmatrix},
\begin{bmatrix} & 1 & &  \\ -1 & & & \\ & & & 1 \\ & & 1 &  \end{bmatrix},
\begin{bmatrix} & & & 1  \\ & & 1 & \\ & -1 & & \\ 1 & & & \end{bmatrix}, 
\begin{bmatrix} & & -1 &  \\ & & & 1 \\ 1 & & & \\ & 1 & & \end{bmatrix}. 
\end{align*}
Finally, for $s_i=8$, we provide an example by giving matrices $\mat{M}$ and $\mat{N}$, so that the tensor has nonzeros $\tsr{T}(i,j,\mat{M}(i,j))=\mat{N}(i,j)$ for each entry in $\mat{M}$,
\begin{align*}
\mat{M} = \begin{bmatrix}
1 & 2 & 3 & 4 & 5 & 6 & 7 & 8 \\
2 & 1 & 4 & 3 & 6 & 5 & 8 & 7 \\
3 & 4 & 1 & 2 & 7 & 8 & 5 & 6 \\
4 & 3 & 2 & 1 & 8 & 7 & 6 & 5 \\
5 & 6 & 7 & 8 & 1 & 2 & 3 & 4 \\
6 & 5 & 8 & 7 & 2 & 1 & 4 & 3 \\
7 & 8 & 5 & 6 & 3 & 4 & 1 & 2 \\
8 & 7 & 6 & 5 & 4 & 3 & 2 & 1
\end{bmatrix},  \mat{N} = \begin{bmatrix}
 1 &  1 &  1 &  1 &  1 &  1 &  1 &  1 \\
-1 &  1 &  1 & -1 &  1 & -1 & -1 &  1 \\
-1 & -1 &  1 &  1 &  1 &  1 & -1 & -1 \\
-1 &  1 & -1 &  1 &  1 & -1 &  1 & -1 \\
-1 & -1 & -1 & -1 &  1 &  1 &  1 &  1 \\
-1 &  1 & -1 &  1 & -1 &  1 & -1 &  1 \\
-1 &  1 &  1 & -1 & -1 &  1 &  1 & -1 \\
-1 & -1 &  1 &  1 & -1 & -1 &  1 &  1
\end{bmatrix}.
\end{align*}
The fact that the latter two tensors have unit condition number can be verified by symbolic algebraic manipulation or numerical tests.

These tensors provide solutions to special cases of the Hurwitz problem~\cite{Hurwitz1898}, which seeks bilinear forms $z_1,\ldots, z_n$ in variables $x_1,\ldots, x_l$ and $y_1,\ldots, y_m$ such that
\[\left(x_1^2+\cdots+x_l^2\right)\left(y_1^2+\cdots+ y_m^2\right) = z_1^2 + \cdots + z_n^2.\]
Consequently, if for $\tsr{T}$ and any vectors $\vcr{{x}}$, $\vcr{{y}}$,
\[
\frac{\vnrm{\tsr{T}\times_2 \vcr{{x}}^{{T}} \times_3 \vcr{y}^{{T}}}}{\vnrm{\vcr x}\vnrm{\vcr y}} =  1 \quad \Rightarrow \quad \vnrm{\tsr{T}\times_2 \vcr{{x}}^{{T}} \times_3 \vcr{y}^{{T}}}^2 = \vnrm{\vcr x}^2 \vnrm{\vcr y}^2,
\] 
so we can define bilinear forms,
\[z_i = \sum_{j}\sum_{k} \tsr{T}(i,j,k)x_jy_k,\]
that provide a solution to the Hurwitz problem.
Such equidimensional tensors with unit condition number exist for dimension $s\in\{1,2,4,8\}$~\cite{radon1922lineare}, corresponding to the Hurwitz problem with $l=m=n=s$.
However, solutions to the Hurwitz problem with $l=m=n$ cannot exist for any other dimension.
Furthermore, tight bounds exist on the dimension $s_3$ for a tensor of dimensions $s\times s\times s_3$ to have bounded condition number ($\inf\{\fvcr{f}_{\tsr{T}}\}> 0$).
This problem is equivalent to finding $s_3$ matrices of dimension $s\times s$, such that any nonzero linear combination thereof is invertible.
Factorizing $s=2^{4a+b}c$, where $b\in\{0,1,2,3\}$ and $c$ is odd, $s_3\leq 8a+2^b$~\cite{adams1962vector,adams1965matrices}.

\subsection{Properties of the Tensor Condition Number}

In our analysis, we make use of the following submultiplicativity property of the tensor condition number with respect to multilinear multiplication (the property also generalizes to pairs of arbitrary order tensors contracted over one mode).
\begin{lem}
\label{lem:kappamat}
For any $\tsr{T}{\in}\mathbb{R}^{s_1\times\cdots\times s_N}$ and matrix $\mat{M}$, if  {$\tsr{V}{=}\tsr{T}\times_N\mat{M}^T$} then  $\kappa(\tsr{V}){\leq} \kappa(\tsr{T})\kappa(\mat{M})$.
% \in \mathbb{R}^{R\times s}$ and $R<s$, for any $n\in\{1,\ldots , N\}$ if  $ \tsr{V} = \tsr{T} \times_n \mat{M},$
%$\kappa(\tsr{V})\leq \kappa(\tsr{T})$.
\end{lem}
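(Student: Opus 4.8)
The plan is to bound the supremum and infimum of the amplification function $\fvcr{f}_{\tsr{V}}$ separately in terms of those of $\fvcr{f}_{\tsr{T}}$ and the extreme singular values of $\mat{M}$, then take the ratio. Write $\tsr{V}=\tsr{T}\mat{M}=\tsr{T}\times_N\mat{M}^T$ with $\mat{M}\in\mathbb{R}^{s_N\times R}$. First I would dispose of the degenerate cases: if $\kappa(\mat{M})=\infty$ (e.g.\ $R>s_N$, or $\mat{M}$ has a zero singular value) or $\kappa(\tsr{T})=\infty$, then, since $\kappa(\tsr{T}),\kappa(\mat{M})\geq 1$ whenever finite, the right-hand side is $+\infty$ and the bound is trivial. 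So assume $\mat{M}$ has full column rank, so that $\tnrm{\mat{M}}=\sigma_{\max}(\mat{M})$ and $\vnrm{\mat{M}\vcr{x}}\geq\sigma_{\min}(\mat{M})\vnrm{\vcr{x}}$ for all $\vcr{x}\in\mathbb{R}^R$, where $\sigma_{\min}(\mat{M})>0$ is the smallest singular value of the reduced SVD. Note also that $\kappa(\tsr{T})<\infty$ forces $s_1$ to be a largest mode of $\tsr{T}$, and since contracting by $\mat{M}$ only shrinks the last mode, $s_1$ stays a largest mode of $\tsr{V}$, so $\kappa(\tsr{V})$ is indeed $\sup\{\fvcr{f}_{\tsr{V}}\}/\inf\{\fvcr{f}_{\tsr{V}}\}$.

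For the supremum, $\sup\{\fvcr{f}_{\tsr{V}}\}=\tnrm{\tsr{V}}\leq\tnrm{\tsr{T}}\tnrm{\mat{M}}=\sup\{\fvcr{f}_{\tsr{T}}\}\,\sigma_{\max}(\mat{M})$ directly from Lemma~\ref{lem:norm}. For the infimum, I would fix arbitrary nonzero $\vcr{x}_2,\ldots,\vcr{x}_N$ with $\vcr{x}_N\in\mathbb{R}^R$, set $\vcr{z}=\mat{M}\vcr{x}_N\neq\vcr{0}$, and use the identity $\mat{V}_{(1)}(\vcr{x}_2\circ\cdots\circ\vcr{x}_N)=\mat{T}_{(1)}(\vcr{x}_2\circ\cdots\circ\vcr{x}_{N-1}\circ\vcr{z})$ (which follows by expanding the mode-$N$ contraction of $\tsr{T}\times_N\mat{M}^T$ with $\vcr{x}_N$) together with the scale invariance of $\fvcr{f}_{\tsr{T}}$ in each argument to get
\[
\fvcr{f}_{\tsr{V}}(\vcr{x}_2,\ldots,\vcr{x}_N)=\fvcr{f}_{\tsr{T}}(\vcr{x}_2,\ldots,\vcr{x}_{N-1},\vcr{z})\,\frac{\vnrm{\vcr{z}}}{\vnrm{\vcr{x}_N}}\geq\inf\{\fvcr{f}_{\tsr{T}}\}\,\sigma_{\min}(\mat{M}).
\]
Taking the infimum over all admissible inputs yields $\inf\{\fvcr{f}_{\tsr{V}}\}\geq\inf\{\fvcr{f}_{\tsr{T}}\}\,\sigma_{\min}(\mat{M})$.

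Dividing the two estimates gives
\[
\kappa(\tsr{V})=\frac{\sup\{\fvcr{f}_{\tsr{V}}\}}{\inf\{\fvcr{f}_{\tsr{V}}\}}\leq\frac{\sigma_{\max}(\mat{M})}{\sigma_{\min}(\mat{M})}\cdot\frac{\sup\{\fvcr{f}_{\tsr{T}}\}}{\inf\{\fvcr{f}_{\tsr{T}}\}}=\kappa(\mat{M})\,\kappa(\tsr{T}),
\]
which is the claimed bound. I do not anticipate a substantive obstacle: the supremum half is just Lemma~\ref{lem:norm}, and the infimum half is its mirror image, run through the substitution $\vcr{z}=\mat{M}\vcr{x}_N$. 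The only places needing care are the bookkeeping of the infinite/rank-deficient cases and being explicit that $\sigma_{\min}$ refers to the reduced SVD, so the lower bound $\vnrm{\mat{M}\vcr{x}}\geq\sigma_{\min}(\mat{M})\vnrm{\vcr{x}}$ is legitimate. For the parenthetical generalization to two arbitrary tensors contracted over a single mode, the same two-sided argument applies verbatim once the uncontracted modes of the second tensor are treated as extra free arguments of the amplification function; I would note this but not work it out in detail.
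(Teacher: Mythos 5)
Your proof is correct and rests on the same core identity as the paper's, namely $\mat{V}_{(1)}(\vcr{x}_2\circ\cdots\circ\vcr{x}_N)=\mat{T}_{(1)}(\vcr{x}_2\circ\cdots\circ\vcr{x}_{N-1}\circ\mat{M}\vcr{x}_N)$ combined with the two-sided singular value bounds on $\vnrm{\mat{M}\vcr{x}_N}$; the paper merely packages this as a proof by contradiction on the ratio rather than bounding $\sup\{\fvcr{f}_{\tsr{V}}\}$ and $\inf\{\fvcr{f}_{\tsr{V}}\}$ separately. Your direct formulation is, if anything, slightly cleaner in handling the degenerate and rank-deficient cases, which the paper glosses over.
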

\begin{proof}
%Without loss of generality, let $n=1$. 
Assume $\kappa(\tsr{V})> \kappa(\tsr{T})\kappa(\mat{M})$, then there exist unit vectors {$\vcr{x}^{(2)},\ldots, \vcr{x}^{(N)}$ and $\vcr{y}^{(2)},\ldots, \vcr{y}^{(N)}$} such that
\[
\kappa(\tsr{T})\kappa(\mat{M})<\kappa(\tsr{V})
=\frac{
{\vnrm{\tsr{V} \bigtimes_{i\in\{2,\ldots, N\}}\vcr{x}^{(i)T}}}
%\vnrm{\mat{V}_{(1)} (\vcr{x}_2\circ \cdots \circ \vcr{x}_{N})}
}{
{\vnrm{\tsr{V} \bigtimes_{i\in\{2,\ldots, N\}}\vcr{y}^{(i)T}}}
%\vnrm{\mat{V}_{(1)} (\vcr{y}_2\circ \cdots \circ \vcr{y}_{N})}
} 
=\frac{
{\vnrm{\tsr{T} \bigtimes_{i\in\{2,\ldots, N-1\}}\vcr{x}^{(i)T} \times_N \left(\mat{M}\vcr{x}^{(N)}\right)^T }}
}{
{\vnrm{\tsr{T} \bigtimes_{i\in\{2,\ldots, N-1\}}\vcr{y}^{(i)T} \times_N \left(\mat{M}\vcr{y}^{(N)}\right)^T }}
}.
\]
\normalsize
Let 
\(
\vcr{u} = \mat{M}{\vcr{x}^{(N)}} 
\)
and
\(
\vcr{v} = \mat{M}{\vcr{y}^{(N)}} 
\), so $\vnrm{\vcr{u}}/\vnrm{\vcr{v}}\leq \kappa(\mat{M})$,
yielding a contradiction,
\begin{align*}
\kappa(\tsr{V})
\leq\frac{
{\vnrm{\tsr{T} \bigtimes_{i\in\{2,\ldots, N-1\}}\vcr{x}^{(i)T} \times_N (\vcr{u}/\vnrm{\vcr{u}})^T }}
%\vnrm{\mat{T}_{(1)} (\vcr{x}_2\circ\cdots \circ \vcr{x}_{N-1}\circ (\vcr{u}/\vnrm{\vcr{u}}))}
}{
{\vnrm{\tsr{T} \bigtimes_{i\in\{2,\ldots, N-1\}}\vcr{y}^{(i)T} \times_N (\vcr{v}/\vnrm{\vcr{v}})^T }}
%\vnrm{\mat{T}_{(1)} (\vcr{y}_2\circ\cdots \circ \vcr{y}_{N-1}\circ (\vcr{v}/\vnrm{\vcr v}))}
}\kappa(\mat{M})
\leq
\kappa(\tsr{T})\kappa(\mat{M}).
\end{align*}
\end{proof}
Applying Lemma~\ref{lem:kappamat} with a vector, i.e. when $\mat{M}\in\mathbb{R}^{s_N\times 1}$ and so has condition number $\kappa(\vcr{M})=1$, implies $\kappa\left({\tsr{T}\times_N\mat{M}^T}\right)\leq \kappa(\tsr{T})$.
By an analogous argument to the proof of Lemma~\ref{lem:kappamat}, we can also conclude that the norm and infimum of such a product of $\tsr{T}$ with unit vectors are bounded by those of $\tsr{T}$, giving the following corollary.
\begin{cor}
\label{cor:kappared}
For any $\tsr{T}\in\mathbb{R}^{s_1\times\cdots\times s_N}$, vector $\vcr{u} \in \mathbb{R}^{s_n}$, and any 
$n\in\inti{1}{N}$ such that $\exists m \in\inti{1}{N}$ with $s_m\geq s_n$ and $m\neq n$, if
\(\tsr{V} = \tsr{T} \times_n \vcr{u}^{{T}},\)
then $\tnrm{\tsr{V}} \leq \vnrm{\vcr{u}}\tnrm{\tsr{T}}$, $\inf\{\fvcr{f}_{\tsr{V}}\} \geq  \vnrm{\vcr{u}}\inf\{\fvcr{f}_{\tsr{T}}\}$, and
$\kappa(\tsr{V})\leq \kappa(\tsr{T})$.
\end{cor}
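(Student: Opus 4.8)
The plan is to realize $\fvcr{f}_{\tsr{V}}$ as, up to a relabeling of modes, the restriction of $\fvcr{f}_{\tsr{T}}$ to those inputs whose $n$th argument is frozen to $\vcr{u}/\vnrm{\vcr{u}}$, so that passing from $\tsr{T}$ to $\tsr{V}$ can only shrink the range of $\fvcr{f}$: a supremum over a smaller set does not grow and an infimum over it does not decrease, and all three claimed bounds fall out at once. The only real work is the bookkeeping that makes this precise, which splits into a homogeneity reduction and a mode-reordering reduction, exactly paralleling the proof of Lemma~\ref{lem:kappamat}.

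First I would reduce to the case $\vnrm{\vcr{u}}=1$. Writing $\vcr{u}=\vnrm{\vcr{u}}\,\hat{\vcr{u}}$ with $\hat{\vcr{u}}$ a unit vector gives $\tsr{V}=\vnrm{\vcr{u}}\,(\tsr{T}\times_n\hat{\vcr{u}})$, and since $\tnrm{c\tsr{W}}=|c|\,\tnrm{\tsr{W}}$, $\fvcr{f}_{c\tsr{W}}=|c|\,\fvcr{f}_{\tsr{W}}$ pointwise (so $\sup$ and $\inf$ scale by $|c|$), and $\kappa(c\tsr{W})=\kappa(\tsr{W})$, the general statement follows once it is proven for unit $\vcr{u}$; the scalar $\vnrm{\vcr{u}}$ then reappears in each bound.

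Second I would dispose of the contracted mode $n$. The bound $\tnrm{\tsr{V}}\le\tnrm{\tsr{T}}$ (unit case) I would get directly: since the spectral norm is invariant under reordering of modes, reorder so that $n$ is the last mode and apply Lemma~\ref{lem:norm} with $\mat{M}=\hat{\vcr{u}}\in\mathbb{R}^{s_n\times1}$, for which $\tnrm{\mat{M}}=1$. For the remaining two bounds, if $\kappa(\tsr{T})=\infty$ then $\kappa(\tsr{V})\le\kappa(\tsr{T})$ is trivial, and since $\sup\{\fvcr{f}_{\tsr{T}}\}=\tnrm{\tsr{T}}$ is finite, $\kappa(\tsr{T})=\infty$ forces $\inf\{\fvcr{f}_{\tsr{T}}\}=0$, so $\inf\{\fvcr{f}_{\tsr{V}}\}\ge0=\inf\{\fvcr{f}_{\tsr{T}}\}$ is trivial too. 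Otherwise $\kappa(\tsr{T})<\infty$, hence $s_1=\max_i s_i$; if $n=1$ the hypothesis supplies some $m\neq1$ with $s_m\ge s_1$, hence $s_m=s_1$, and swapping modes $1$ and $m$ is a valid reordering (first mode still of maximal dimension) that leaves $\tnrm{\tsr{T}}$, $\inf\{\fvcr{f}_{\tsr{T}}\}$ and $\kappa(\tsr{T})$ unchanged, as discussed in Section~\ref{subsec:cond}, while moving the contracted mode off the first position. Thus I may assume $n\neq1$, and then every mode of $\tsr{V}$ has dimension at most $s_1$, so mode $1$ is again a valid output mode for $\tsr{V}$.

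Finally, with $\vnrm{\vcr{u}}=1$ and $n\neq1$, for any unit vectors $\vcr{x}_i$ along the modes of $\tsr{V}$ (that is, $i\in\{2,\ldots,N\}\setminus\{n\}$) I would use
\begin{align*}
\mat{V}_{(1)}\big(\vcr{x}_2\circ\cdots\circ\vcr{x}_{n-1}\circ\vcr{x}_{n+1}\circ\cdots\circ\vcr{x}_N\big)
&=\mat{T}_{(1)}\big(\vcr{x}_2\circ\cdots\circ\vcr{x}_{n-1}\circ\vcr{u}\circ\vcr{x}_{n+1}\circ\cdots\circ\vcr{x}_N\big),
\end{align*}
so that every value of $\fvcr{f}_{\tsr{V}}$ on unit inputs is a value of $\fvcr{f}_{\tsr{T}}$ on unit inputs whose $n$th block is $\vcr{u}$. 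Taking suprema gives $\tnrm{\tsr{V}}=\sup\{\fvcr{f}_{\tsr{V}}\}\le\sup\{\fvcr{f}_{\tsr{T}}\}=\tnrm{\tsr{T}}$, taking infima gives $\inf\{\fvcr{f}_{\tsr{V}}\}\ge\inf\{\fvcr{f}_{\tsr{T}}\}$, and dividing the two yields $\kappa(\tsr{V})\le\kappa(\tsr{T})$; reinserting the factor $\vnrm{\vcr{u}}$ recovers the three stated inequalities. The main obstacle is not any of the inequalities themselves — which, as for Lemma~\ref{lem:kappamat}, reduce to the elementary fact that restriction cannot enlarge a supremum or reduce an infimum — but the reordering step: one must invoke reordering-invariance of the condition number, valid only among orderings whose first mode has maximal dimension, together with the hypothesis $\exists\, m\neq n$ with $s_m\ge s_n$, precisely to guarantee that a reordering placing $n$ off the first mode exists.
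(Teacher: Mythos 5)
Your proof is correct and follows essentially the same route as the paper, which obtains the corollary by specializing Lemma~\ref{lem:kappamat} to a one-column matrix (condition number $1$) and notes that "analogous arguments" give the norm and infimum bounds; your restriction-of-the-domain observation is exactly the substance of that lemma's substitution argument. Your explicit treatment of the homogeneity reduction and of the mode-reordering needed when $n=1$ (where the hypothesis $\exists\, m\neq n$ with $s_m\geq s_n$ actually does work) is more careful than the paper's one-line derivation, which leaves those steps implicit.
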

For an orthogonal matrix $\mat{M}$, Lemma~\ref{lem:kappamat} can be applied in both directions, namely for $\tsr{V}={\tsr{T}\times_N\mat{M}^T}$ and $\tsr{T}={\tsr{V}\times_N\mat{M}}$, so we observe that $\kappa(\tsr{V})=\kappa(\tsr{T})$.
Using this fact, we demonstrate in the following theorem that any tensor $\tsr{T}$ can be transformed by orthogonal matrices along each mode, so that one of its fibers has norm $\tnrm{\tsr{T}}/\kappa(\tsr{T})$.
\begin{theorem}
\label{thm:fibersimilar}
For any $\tsr{T}\in\mathbb{R}^{s_1\times\cdots\times s_N}$, there exist orthogonal matrices  {$\mat{Q}^{(2)}\ldots \mat{Q}^{(N)}$}, with ${\mat{Q}^{(i)}}\in\mathbb{R}^{s_{i}\times s_{i}}$, such that {$\tsr{V}=\tsr{T} \times_2 \mat{Q}^{(2)}\cdots \times_N\mat{Q}^{(N)}$} satisfies $\kappa(\tsr{V})=\kappa(\tsr{T})$, $\tnrm{\tsr{V}}=\tnrm{\tsr{T}}$, and the first fiber of $\tsr{V}$, i.e. the vector $\vcr{v}$ with $\vcr{v}(i)=\tsr{V}(i,0,\ldots,0)$, satisfies $\vnrm{\vcr{v}}=\tnrm{\tsr T}/\kappa(\tsr T)$.
\end{theorem}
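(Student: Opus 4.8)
The plan is to reduce the statement to two facts: for \emph{any} choice of orthogonal $\mat{Q}_2,\ldots,\mat{Q}_N$ the transformed tensor $\tsr{V}$ has the same spectral norm and condition number as $\tsr{T}$ (so only the fiber-norm claim constrains the $\mat{Q}_i$), and the infimum defining $\kappa(\tsr{T})$ is attained over the product of unit spheres with value $\tnrm{\tsr{T}}/\kappa(\tsr{T})$ (so we can pick the first rows of the $\mat{Q}_i$ to realize that value as the norm of the first fiber).

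For the first fact, I would note that each $\mat{Q}_i$ is square orthogonal, hence $\tnrm{\mat{Q}_i}=\tnrm{\mat{Q}_i^T}=1$ and $\kappa(\mat{Q}_i)=\kappa(\mat{Q}_i^T)=1$, and that $\tsr{T}=\tsr{V}\times_i\mat{Q}_i^T$ whenever $\tsr{V}=\tsr{T}\times_i\mat{Q}_i$. Applying Lemma~\ref{lem:norm} in both directions, one mode at a time over $i=2,\ldots,N$ (legitimate along any mode by mode-reordering invariance of the spectral norm), gives $\tnrm{\tsr{V}}=\tnrm{\tsr{T}}$; applying Lemma~\ref{lem:kappamat} (which, as noted in its statement, also governs contraction over an arbitrary single mode) in the same way gives $\kappa(\tsr{V})=\kappa(\tsr{T})$. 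These hold for an arbitrary choice of the $\mat{Q}_i$, so the first two conclusions of the theorem are free.

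Next I would express the first fiber of $\tsr{V}$ through the first rows of the $\mat{Q}_i$. From
\[
\tsr{V}(i,i_2,\ldots,i_N)=\sum_{k_2,\ldots,k_N}\tsr{T}(i,k_2,\ldots,k_N)\prod_{j=2}^{N}\mat{Q}_j(i_j,k_j),
\]
fixing each $i_j$ ($j\geq 2$) to its first value shows that the first fiber is $\vcr{v}=\mat{T}_{(1)}(\vcr{q}_2\circ\cdots\circ\vcr{q}_N)=\fvcr{g}_{\tsr{T}}(\vcr{q}_2,\ldots,\vcr{q}_N)$, where $\vcr{q}_j\in\mathbb{R}^{s_j}$ is the first row of $\mat{Q}_j$ and is therefore a unit vector. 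Hence $\vnrm{\vcr{v}}=\fvcr{f}_{\tsr{T}}(\vcr{q}_2,\ldots,\vcr{q}_N)$.

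It remains to pick $\vcr{q}_2,\ldots,\vcr{q}_N$ with $\fvcr{f}_{\tsr{T}}(\vcr{q}_2,\ldots,\vcr{q}_N)=\inf\{\fvcr{f}_{\tsr{T}}\}=\tnrm{\tsr{T}}/\kappa(\tsr{T})$. The function $(\vcr{x}_2,\ldots,\vcr{x}_N)\mapsto\vnrm{\mat{T}_{(1)}(\vcr{x}_2\circ\cdots\circ\vcr{x}_N)}$ is continuous on the compact set $S^{s_2-1}\times\cdots\times S^{s_N-1}$, so it attains its minimum, which by homogeneity equals $\inf\{\fvcr{f}_{\tsr{T}}\}$; this value is $\tnrm{\tsr{T}}/\kappa(\tsr{T})$, and the argument also covers $\kappa(\tsr{T})=\infty$ (minimum $0$) while $\tsr{T}=0$ is trivial. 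Taking minimizers $\vcr{q}_2^{\star},\ldots,\vcr{q}_N^{\star}$ and completing each $\vcr{q}_j^{\star}$ to an orthonormal basis of $\mathbb{R}^{s_j}$ produces orthogonal $\mat{Q}_j$ with first row $\vcr{q}_j^{\star}$, and the resulting $\tsr{V}$ has $\vnrm{\vcr{v}}=\tnrm{\tsr{T}}/\kappa(\tsr{T})$. The only substantive point is the attainment of the infimum, handled by compactness and continuity; the rest is bookkeeping, with the one pitfall being to keep the contraction/transpose conventions consistent throughout.
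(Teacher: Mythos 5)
Your proposal is correct and follows essentially the same route as the paper: contract $\tsr{T}$ along modes $2,\ldots,N$ with orthogonal matrices whose distinguished rows are unit vectors attaining the infimum of $\fvcr{f}_{\tsr{T}}$, then invoke Lemma~\ref{lem:norm} and Lemma~\ref{lem:kappamat} in both directions to see that the norm and condition number are preserved. The only difference is that you explicitly justify attainment of the infimum by compactness, which the paper simply asserts; this is a welcome but minor refinement rather than a different argument.
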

\begin{proof}
Given a tensor $\tsr{T}$ with infinite condition number, there must exist $N-1$ unit vectors {$\vcr{x}^{(2)},\ldots, \vcr{x}^{(N)}$}, such that 
$\vnrm{
{
\tsr{T} \bigtimes_{i\in\{2,\ldots, N\}}\vcr{x}^{(i)T}}
%\mat{T}_1 (\vcr{x}_2 \circ \cdots  \circ \vcr{x}_{N})
}=\tnrm{\tsr{T}}/\kappa(\tsr{T})$.
We define $N-1$ orthogonal matrices {$\mat{Q}^{(2)}, \ldots, \mat{Q}^{(N)}$} such that {$\mat{Q}^{(i)T}\vcr{x}^{(i)}=\vcr{e}_i$}. 
We can then contract $\tsr{T}$ with these matrices along the last $N-1$ modes, resulting in $\tsr{V}$, with the same condition number as $\tsr{T}$ (by Lemma~\ref{lem:kappamat}) and the same norm (by a similar argument).
Then, we have that the first fiber of $\tsr{V}$ is
\[
{
\vcr{v} = 
\tsr{V} \bigtimes_{i\in\{2,\ldots, N\}}\vcr{e}_i^{T}
% \mat{V}_1 (\vcr{e}_2 \circ \cdots  \circ \vcr{e}_{N})
=
\tsr{T} \bigtimes_{i\in\{2,\ldots, N\}}\vcr{x}^{(i)T},
%\mat{T}_1 (\vcr{x}_2 \circ \cdots  \circ \vcr{x}_{N}),
}
\]
and consequently $\vnrm{\vcr{v}}=\tnrm{\tsr{T}}/\kappa(\tsr{T})$.
\end{proof}
By Theorem~\ref{thm:fibersimilar}, the condition number of a tensor is infinity if and only if it can be transformed by products with orthogonal matrices along the last $N-1$ modes into a tensor with a zero fiber.
Further, any tensor $\tsr{T}$ may be perturbed to have infinite condition number by adding to it some $\delta\tsr{T}$ with relative norm $\tnrm{\delta\tsr{ T}}/\tnrm{\tsr{T}}=1/\kappa(\tsr{T})$.

\subsection{PP-CP-ALS Error Bound using Tensor Condition Number}
\label{subsec:cp_bound_cond}

For CP decomposition, we obtain condition-number-dependent column-wise error bounds on $\mat{M}^{(n)}$ (the right-hand sides in the linear least squares subproblems), based on the magnitude of the relative perturbation to $\mat{A}^{(n)}$ since the formation of the pairwise perturbation operators.

\begin{thm}
\label{thm:cperrbound2}
If $\frac{\tnrm{d\vcr{a}^{(n)}_k}}{\tnrm{\vcr{a}^{(n)}_k}}\leq\epsilon \ll 1 \text{ for } n\in \inti{1}{N}, k \in\inti{1}{R}$ and $s_m\leq s_n$ for any $m\in\inti{1}{N}$, the pairwise perturbation algorithm without second order corrections computes $\Tilde{\mat{M}}^{(n)}$ with column-wise error,
%then a round of pairwise perturbation computes an ALS iteration with respect to a tensor $\pmb{\Tilde{\mathcal{X}}}$ where 
%\[\frac{\tnrm{\pmb{\Tilde{\mathcal{X}}}^{(n)}-\pmb{\mathcal{X}}}}{\tnrm{\pmb{\mathcal{X}}}} = O(\epsilon^2).\]
\[\frac{\tnrm{\Tilde{\vcr{m}}_k^{(n)}-\vcr{m}_k^{(n)}}}{\tnrm{\vcr{m}^{(n)}_k}} = O\left(\epsilon^2\kappa(\tsr{X})\right),\]
where $\mat{M}^{(n)}$ is the matrix given by a regular ALS sweep.
\end{thm}
\begin{proof}
We bound the error due to second order perturbations in $d\mat{A}^{(1)},\ldots,d\mat{A}^{(n)}$, by similar analysis, higher-order perturbations would lead to errors smaller by a factor of $O(\text{poly}(N)\epsilon)$ and are consequently negligible if $\epsilon \ll 1$.
Consider the order four tensors $\tsr{M}^{(i,j,n)}$ (Equation~\ref{eq:tensors-cp}) based on the current factor matrices $\mat{A}^{(1)},\ldots, \mat{A}^{(N)}$ and the pairwise perturbation operators $\tsr{M}^{(i,j,n)}_p$ based on past factor matrices $\mat{A}_p^{(1)},$
$\ldots, \mat{A}_p^{(N)}$.
% by contraction of $\tsr{X}$ with all except three factor matrices,
%\[\tsr{M}^{(i,j,n)}(x_i,x_j,x_n,k)
%    = \sum_{\{x_1,\ldots, x_N\}\setminus \{x_i,x_j,x_n\}}\tsr{X}(x_1,\ldots,x_N)\prod_{r\in\{1,\ldots,N\} \setminus \{i,j,n\}} \mat{A}^{(r)}(x_r,k).\]
%For any $i,j\in\{1,N\},i\neq j$, 
The contribution of second order terms to the error is
%\[\vcr{m}_k^{(n)}(x) = \sum_{y=1}^s\sum_{z=1}^s\tsr{M}^{(i,j,n)}(x,y,z,k)\vcr{a}_k^{(i)}(y) \vcr{a}_k^{(j)}(z).\]
%The approximation error of pairwise perturbation is
\[\Tilde{\textbf{m}}^{(n)}_k(x)-\textbf{m}^{(n)}_k(x) \approx \sum_{\substack{{i,j\in \{1,\ldots,n-1,n+1,\ldots,N\}} \\{ i\neq j}}} \sum_{y=1}^s\sum_{z=1}^s\tsr{M}_p^{(i,j,n)}(x,y,z,k)d\vcr{a}_k^{(i)}(y) d\vcr{a}_k^{(j)}(z).\]
This absolute error has magnitude,
\[
\vnrm{\Tilde{\vcr{m}}^{(n)}_k-\vcr{m}^{(n)}_k} \leq {N \choose 2} \max_{i,j} \tnrm{\tsr{M}_p^{(i,j,n)}(:,:,:,k)} \vnrm{d\vcr{a}^{(i)}_k} \vnrm{d\vcr{a}^{(j)}_k}.
\]
Using the fact that for any $i,j$ we can express $\vcr{m}^{(n)}_k$ as
\[\vcr{m}^{(n)}_k(x) = \sum_{y=1}^s \sum_{z=1}^s\tsr{M}^{(i,j,n)}(x,y,z,k) \vcr{a}^{(i)}_k(y) \vcr{a}^{(j)}_k(z),\]
we can lower bound the magnitude of the answer with respect to any $\tsr{M}^{(i,j,n)}$,
\begin{align*}
\vnrm{\vcr{m}^{(n)}_k} \geq \tinf{\tsr{M}^{(i,j,n)}(:,:,:,k)}\vnrm{\vcr{a}^{(i)}_k} \vnrm{\vcr{a}^{(j)}_k}.
\end{align*}
Combining the upper bound on the absolute error with the lower  bound on norm,
\begin{align*}
\frac{\vnrm{\Tilde{\vcr{m}}^{(n)}_k-\vcr{m}^{(n)}_k}}{\vnrm{\vcr{m}^{(n)}_k}} \leq {N \choose 2}\max_{i,j} \frac{ \tnrm{\tsr{M}_p^{(i,j,n)}(:,:,:,k)} \vnrm{d\vcr{a}^{(i)}_k} \vnrm{d\vcr{a}^{(j)}_k}}{\tinf{\tsr{M}^{(i,j,n)}(:,:,:,k)}\vnrm{\vcr{a}^{(i)}_k} \vnrm{\vcr{a}^{(j)}_k}}.
\end{align*}
Lemma~\ref{lem:kappamat} implies that  for any $i,j,k$,
\[\tnrm{\tsr{M}_p^{(i,j,n)}(:,:,:,k)}\leq \tnrm{\tsr{X}} \prod_{l\in \inti{1}{N} \setminus \{i,j,n\}} \tnrm{\mat{A}^{(l)}_p(:,k)}\]
and that 
\[\tinf{\tsr{M}^{(i,j,n)}(:,:,:,k)}\geq \tinf{\tsr{X}}\prod_{l\in \inti{1}{N} \setminus\{i,j,n\}} \tnrm{\mat{A}^{(l)}(:,k)}.\]
Since, $\tnrm{\mat{A}^{(l)}_p(:,k)}\leq (1+\epsilon) \tnrm{\mat{A}^{(l)}(:,k)}$, we obtain the bound,
\begin{align*}
\frac{\vnrm{\Tilde{\vcr{m}}^{(n)}_k-\vcr{m}^{(n)}_k}}{\vnrm{\vcr{m}^{(n)}_k}}
\leq {N\choose 2}\kappa(\tsr{X})(1+\epsilon)^{N-3}\epsilon^2\approx {N\choose 2}\kappa(\tsr{X})\epsilon^2.
\end{align*}

\end{proof}
This error bound is relative to the condition number of $\tsr{X}$, which means the bound is sensitive to the input tensor and that the error may be unbounded if $\tsr{X}$ has an exact CP decomposition of rank at most $\min_{i}s_i$.

\subsection{PP-Tucker-ALS Error Bound using Tensor Condition Number}

For Tucker decomposition, we again obtain bounds based on the perturbation to $\mat{A}^{(n)}$, this time for $\tsr{Y}^{(n)}$ (the tensors on whose matricizations a truncated SVD is performed).
Using Lemma~\ref{lem:kappamat}, we prove in Theorem~\ref{thm:tuckerbound2} that when the tensor has same length in each mode and the relative error of the matrices $\mat{A}^{(n)}$ for $n\in\inti{1}{N}$ is small, the relative error for the $\tilde{\tsr{Y}}^{(n)}$ is also small.
\begin{thm}
\label{thm:tuckerbound2}
Given an order $N$ equidimensional tensor $\tsr{X}$ with size $s$, if
$\tnrm{d\mat{A}^{(n)}}\leq\epsilon\ll 1 \text{ for } n\in \inti{1}{N}$, 
the pairwise perturbation algorithm computes
$\Tilde{\tsr{Y}}^{(n)}$ with error,
\[
\frac{\tnrm{\Tilde{\tsr{Y}}^{(n)}-\tsr{Y}^{(n)}}}{\tnrm{\tsr{Y}^{(n)}}} = O\left(\epsilon^2\kappa(\tsr{X})\right).
\]
\end{thm} 
\begin{proof} As in Theorem~\ref{thm:cperrbound2}, we bound the error due to second-order terms,
\begin{align*}
\frac{\tnrm{\Tilde{\tsr{Y}}^{(n)}-\tsr{Y}^{(n)}}}{\tnrm{\tsr{Y}^{(n)}}} 
= {N \choose 2}\max_{i,j}\frac{\tnrm{\tsr{Y}^{(i,j,n)}_p\times_id\mat{A}^{(i)T}\times_jd\mat{A}^{(j)T}}}{\tnrm{\tsr{Y}^{(i,j,n)}\times_i\mat{A}^{(i)T}\times_j\mat{A}^{(j)T}}} .
\end{align*}
From Lemma ~\ref{lem:kappamat}, we have   
$$
 \frac{\tnrm{\tsr{Y}^{(i,j,n)}_p\times_id\mat{A}^{(i)T}\times_jd\mat{A}^{(j)T}}}{\tnrm{\tsr{Y}^{(i,j,n)}\times_i\mat{A}^{(i)T}\times_j\mat{A}^{(j)T}}} \leq
\frac{\tnrm{\tsr{Y}^{(i,j,n)}_p}\tnrm{d\mat{A}^{(i)}}\tnrm{d\mat{A}^{(j)}}}{\tinf{\tsr{Y}^{(i,j,n)}}\tnrm{\mat{A}^{(i)}}\tnrm{\mat{A}^{(j)}}} .
$$
Since $\mat{A}^{(i)}$ and $\mat{A}^{(j)}$ are both matrices with orthonormal columns,
\[
\frac{\tnrm{\Tilde{\tsr{Y}}^{(n)}-\tsr{Y}^{(n)}}}{\tnrm{\tsr{Y}^{(n)}}} 
\leq {N \choose 2}\max_{i,j} \frac{\tnrm{\tsr{Y}^{(i,j,n)}_p}\tnrm{d\mat{A}^{(i)}}\tnrm{d\mat{A}^{(j)}}}{\tinf{\tsr{Y}^{(i,j,n)}}}=O\left(\epsilon^2\kappa(\tsr{X})\right).
\]
\end{proof}

\end{document}